\DeclareMathOperator{\e}{e}
\DeclareMathOperator{\Proj}{\mathrm{Proj}}
\DeclareMathOperator{\dist}{\mathrm{dist}}
\DeclareMathOperator{\card}{\mathrm{card}}
\DeclareMathOperator{\A}{\mathcal{A}}
\DeclareMathOperator{\N}{\mathcal{N}}
\DeclareMathOperator{\R}{\mathds{R}}
\DeclareMathOperator{\Integer}{\mathds{N}}
\DeclareMathOperator{\V}{\mathds{V}\mathrm{ar}}
\DeclareMathOperator{\E}{\mathds{E}}
\DeclareMathOperator{\Cov}{\mathds{C}\mathrm{ov}}
\DeclareMathOperator{\Prob}{\mathds{P}}
\DeclareMathOperator{\1}{\mathds{1}}
\DeclareMathOperator\supp{supp}
\DeclareMathOperator{\Distortion}{\mathcal{Q}_{2, N}}
\newcommand{\vertiii}[1]{{\left\vert\kern-0.25ex\left\vert\kern-0.25ex\left\vert #1
\right\vert\kern-0.25ex\right\vert\kern-0.25ex\right\vert}}
\theoremstyle{plain}
\newtheorem{theorem}{Theorem}[section]
\newtheorem{lemme}[theorem]{Lemma}
\newtheorem{proposition}[theorem]{Proposition}
\newtheorem{cor}[theorem]{Corollary}
\theoremstyle{definition}
\newtheorem{definition}[theorem]{Definition}
\newtheorem{remark}[theorem]{Remark}
\numberwithin{equation}{section}
\renewenvironment{abstract}
 {\small
	\begin{center}
		\bfseries \abstractname\vspace{-.5em}\vspace{0pt}
	\end{center}
	\list{}{%
	\setlength{\leftmargin}{20mm}
	\setlength{\rightmargin}{\leftmargin}%
 }%
 \item\relax}
 {\endlist}
\providecommand{\keywords}[1]
{
  {
  \small
  \textbf{\textit{Keywords---}} #1
  }
}
\newcommand*\samethanks[1][\value{footnote}]{\footnotemark[#1]}
\begin{document}

\title{New Weak Error bounds and expansions for Optimal Quantization}
\author{
	\sc Vincent Lemaire
	\thanks{Sorbonne Université, Laboratoire de Probabilités, Statistique et Modélisation, LPSM, Campus Pierre et Marie Curie, case 158, 4 place Jussieu, F-75252 Paris Cedex 5, France.}
	\and
	\sc Thibaut Montes
	\samethanks[1]
	\thanks{The Independent Calculation Agent, The ICA, 5th Floor, 95 Gresham Street, London.}
	\and
	\sc Gilles Pagès
	\samethanks[1]
}
\maketitle

\begin{abstract}
	We propose new weak error bounds and expansion in dimension one for optimal quantization-based cubature formula for different classes of functions, such that piecewise affine functions, Lipschitz convex functions or differentiable function with piecewise-defined locally Lipschitz or $\alpha$-H\"{o}lder derivatives. These new results rest on the local behaviours of optimal quantizers, the $L^r$-$L^s$ distribution mismatch problem and Zador's Theorem. This new expansion supports the definition of a Richardson-Romberg extrapolation yielding a better rate of convergence for the cubature formula. An extension of this expansion is then proposed in higher dimension for the first time. We then propose a novel variance reduction method for Monte Carlo estimators, based on one dimensional optimal quantizers.
\end{abstract}

\keywords{Optimal quantization; Numerical integration; Weak error; Romberg extrapolation; Variance reduction; Monte Carlo simulation; Product quantizer.}
~ \\

\textit{ 2010 AMS Classification:} 65C05, 60E99, 65C50.


\section*{Introduction}

Optimal quantization was first introduced in \cite{sheppard1897calculation}, Sheppard worked on optimal quantization of the uniform distribution on unit hypercubes. It was then extended to more general distributions with applications to Signal transmission at the Bell Laboratory in the 50's (see \cite{gersho1982special}) and then developed as a numerical method in the early 90's, for expectation approximations (see \cite{pages1998space}) and later for conditional expectation approximations (see \cite{pages2004optimal,bally2001stochastic,bally2003quantization,printems2005quantization}).

In modern terms, vector quantization consists in finding the projection for the $L^p$-Wasserstein distance of a probability measure on $\R^d$ with a finite $p$-th moment on the convex subset of $\Gamma$-supported probability measure, where $\Gamma$ is a finite subset of $\R^d$ and $0 < p < +\infty$. The aim of Optimal Quantization is to determine the set $\Gamma_N := \{ x_1^N, \dots, x_N^N \} \subset \R^d$ with cardinality at most $N$ which minimizes this distance among all such sets $\Gamma$. Formally, if we consider a random vector $X \in L^p(\Prob)$, we search for $\Gamma_N$, the solution to the following problem
\begin{equation*}
	\min_{\Gamma_N \subset \R, \vert \Gamma_N \vert \leq N } \Vert X - \widehat X^{\Gamma_N} \Vert_{_p}
\end{equation*}
where $ \widehat X^{\Gamma_N} $ denotes the projection of $X$ onto $ \Gamma_N $ (often $ \widehat X^{\Gamma_N} $ is denoted by $ \widehat X^N $ in order to alleviate the notations). The term $ \Vert X - \widehat X^{\Gamma_N} \Vert_{_p} $ is often referred to as the distortion of order $ p $. The existence of an optimal quantizer at a given level $N$ has been shown in \cite{graf2000foundations,pages1998space} and in the one-dimensional case if the distribution of $X$ is absolutely continuous with a \textit{log-concave} density then there exists a unique optimal quantizer at level $N$. In the present paper we will consider one dimensional optimal quantizers. Moreover, we are not only interested by the existence of such a quantizer but also in the asymptotic behaviour of the distortion because it is an important feature for the method in order to determine the level of the error introduced by the approximation. The question concerning the sharp rate of convergence of $\Vert X - \widehat X^N \Vert_{_p}$ as $N$ goes to infinity is answered by Zador's Theorem. For $X \in L^{p+\delta} (\Prob)$, $\delta>0$, such that $\Prob_{_{X}} (d \xi) = \varphi(\xi) \cdot \lambda ( d \xi ) + \nu ( d \xi ) $, where $\nu ~ \bot ~ \lambda$ is the singular component of $\Prob_{_{X}}$ with respect to the Lebesgue measure $\lambda$ on $\R^d$, the rate of convergence is given by
\begin{equation*}
	\lim_{N \rightarrow + \infty} N^{\frac{1}{d}} \Vert X - \widehat X^N \Vert_{_p} = \widetilde{J}_{p,d} \bigg[ \int_{\R^d} \varphi^{\frac{d}{d+p}} d \lambda_d \bigg]^{\frac{1}{p} + \frac{1}{d}}
\end{equation*}
where $\varphi$ is the density of $X$, $\lambda_d$ is the Lebesgue measure on $\R^d$ and $\widetilde{J}_{p,d} = \inf_{N \geq 1} N^{\frac{1}{d}} \Vert U - \widehat{U}^N \Vert_{_p}$, $U \overset{\mathcal{L}}{\sim} \mathcal{U} \big( (0,1)^d \big)$. For more insights on the mathematical/probabilistic aspects of Optimal quantization theory, we refer to \cite{graf2000foundations,pages2015introduction}.

The reason for which we are interested in this optimal quantizer is numerical integration. The discrete feature of the optimal quantizer $\widehat X^N$ allows us to define, for every continuous function $f:\R^d \longrightarrow \R$, such that $f(X) \in L^2 ( \Prob )$, the following quantization-based cubature formula
\begin{equation*}
	\E \big[ f(\widehat X^N) \big] = \sum_{i=1}^{N} p_i f(x_i^N)
\end{equation*}
where $p_i = \Prob ( \widehat X^N = x_i^N)$. Indeed, as $\widehat X^N$ is constructed as the best discrete approximation of $X$ in $L^p(\Prob)$, it is reasonable to approximate $\E \big[ f(X) \big]$ by $\E \big[ f(\widehat X^N) \big]$ which is useful for numerical integrations problems.

The problem of numerical integration appears a lot in applied fields, such as Physics, Computer Sciences or Numerical Probability. For example, in Quantitative Finance, many quantities of interest are of the form
\begin{equation*}
	\E \big[ f(S_t) \big] \qquad \textrm{for some } t > 0,
\end{equation*}
where $f:\R^d \longrightarrow \R$ is a Borel function and $(S_s)_{s\in [0,t]}$ is a diffusion process solution to a Stochastic Differential Equation (SDE)
\begin{equation*}
	S_t = S_0 + \int_{0}^{t} b(s,S_s) ds + \int_{0}^{t} \sigma (s,S_s) dW_s, \qquad S_0 = s_0,
\end{equation*}
where $W$ is a standard Brownian motion living on a probability space $(\Omega, \A, \Prob)$ and $b$ and $\sigma$ are Lipschitz continuous in $x$ uniformly with respect to $s \in [0, t]$, which are the standard assumptions in order to ensure existence and uniqueness of a strong solution to the SDE. Since it is often impossible to compute $\E \big[ f(S_t) \big]$ directly, it has been proposed in \cite{pages1998space} to compute an optimal quantizer $\widehat X^N$ of $X$ where $X$ is a random variable having the same distribution as $S_t$ and to use the previously defined quantization-based cubature formula as an approximation.

Another approach, often used in order to approximate $\E \big[ f(X) \big]$, is to perform a Monte Carlo simulation $\widehat I_M := \sum_{m=1}^{M} f(X^m)$, where $(X^m)_{m=1, \dots, M}$ is a sequence of independent copies of $X$. The method's rate of convergence is determined by the strong law of numbers and the central limit theorem, which says that if $X$ is square integrable, then
\begin{equation*}
	\sqrt{M} \Big( \widehat{I}_M - \E \big[ f(X) \big] \Big) \xrightarrow{\mathcal{L}} \N \big(0, \sigma_{f(X)}^2 \big) \quad \mbox{as} \quad M \rightarrow + \infty
\end{equation*}
where $\sigma_{f(X)}^2 = \V \big( f(X) \big)$. One notices that, for a given $M$, the limiting factor of the method is $\sigma_{f(X)}^2$. Hence, a lot of methods have been developed in order to reduce the variance term: antithetic variables, control variates, importance sampling, etc. The reader can refer to \cite{pages2018numerical,glasserman2013monte} for more details concerning the Monte Carlo methodology and the variance reduction methods.

In this paper we propose a novel variance reduction method of Monte Carlo estimator through quantization. Our method innovates in that it uses a linear combination of one dimensional control variates to reduce the variance of a higher dimensional problem. More precisely, we introduce a quantization-based control variates $\Xi^{N}_k$ for $k=1, \dots, d$. If one considers a function $f : \R^d \mapsto \R$, we approximate $\E \big[ f(X) \big]$ by
\begin{equation*}
	\E \big[ f(X) - \langle \lambda, \Xi^{N} \rangle \big]
\end{equation*}
with $\langle \cdot, \cdot \rangle$ the scalar product in $\R^d$ and $(\Xi^{N}_k)_{k=1, \dots, d} := f_k(X_k) - \E \big[ f_k (\widehat X_k^N) \big]$, where $X_k$ is the $k$-th component of $X$, $\widehat X_k^N$ is an optimal quantizer of $X_k$ of size $N$ and $f_k : \R \mapsto \R$ is designed from $f$. Looking closely at the introduced control variates, one notices that we introduce a bias in the approximation. However, as since it is closely linked to weak error, this bias can be controlled. The present paper focuses on the weak error's rate of convergence.

First, we place ourselves in the case where $X$ is a random variable in dimension one and we consider a quadratic optimal quantizer. We work on the rate of convergence of the weak error induced by the expectation approximation by an optimal quantization-based cubature formula for different classes of functions $f$
\begin{equation*}
	\lim_{N \rightarrow + \infty} N^{\alpha} \big\vert \E \big[ f(X) \big] - \E \big[ f ( \widehat X^N ) \big] \big\vert \leq C_{f, X} < + \infty.
\end{equation*}
The first classical result concerns Lipschitz continuous functions. Using directly the Lipschitz continuity property of $f$ and Zador's Theorem a rate of order $\alpha=1$ can be obtained. Moreover, if we consider the supremum among all functions with a Lipschitz constant upper-bounded by $1$, then
\begin{equation*}
	N \sup_{[f]_{_{Lip}} \leq 1} \big\vert \E \big[ f(X) \big] - \E \big[ f ( \widehat X^N ) \big] \big\vert = N \Vert X - \widehat X^N \Vert_{_{1}} \leq N \Vert X - \widehat X^N \Vert_{_{2}} \xrightarrow{N \rightarrow + \infty} C_{f} < + \infty.
\end{equation*}
A faster rate ($\alpha = 2$) can be attained for differentiable functions with Lipschitz continuous derivative, using a Taylor expansion with integral remainder and the following stationarity property of quadratic optimal quantizers
\begin{equation*}
	\E \big[ X \mid \widehat X^N \big] = \widehat X^N.
\end{equation*}
Moreover, considering the supremum among all functions where the Lipschitz constant of the derivative is upper-bounded by $1$, we have
\begin{equation*}
	N^2 \sup_{[f']_{_{Lip}} \leq 1} \big\vert \E \big[ f(X) \big] - \E \big[ f ( \widehat X^N ) \big] \big\vert = \frac{1}{2} N^2 \Vert X - \widehat X^N \Vert^2_{_2} \xrightarrow{N \rightarrow + \infty} C_{f} < + \infty
\end{equation*}
where the limit is given by Zador's Theorem. A detailed summary about this results can be found in \cite{pages2018numerical}.

In the first part of this paper, we extend this improved rate ($\alpha=2$) to classes of less smooths functions in one dimension. These new results enable us to design efficient variance reduction methods in higher dimensional settings with in view applications to option pricing. The new results concerns the following classes of functions
\begin{itemize}
	\item Lipschitz continuous piecewise affine functions with finitely many breaks of affinity. We use the stationarity property of the optimal quantizer on the cells where there is no break of affinity and then we control the error on the remaining cells using results on the local behaviour of the quantizer.

	\item Lipschitz continuous convex functions, using local behaviours results on optimal quantizers. We use a representation formula for convex functions as integrals of Ridge functions combined with the local behaviour result in order to control the error again.

	\item Differentiable functions with piecewise-defined locally Lipschitz derivative. The functions have $K$ breaks of affinity $\{ a_1, \dots, a_K \}$, such that $ - \infty = a_0 < a_1 < \cdots < a_K < a_{K+1} = + \infty $ and the locally Lipschitz property of the derivative is defined by
	      \begin{equation*}
		      \forall k = 0, \dots, K, \quad \forall x,y \in (a_k, a_{k+1}) \quad \vert f' (x) - f' (y) \vert \leq [f']_{_{k, Lip, loc}} \vert x - y \vert \big(g_k(x) + g_k(y)\big)
	      \end{equation*}
	      where $g_k: (a_k,a_{k+1})\to \R_+$ are non-negative Borel functions. We use the locally Lipschitz property of the derivative combined with the $L^r$-$L^s$ distortion Theorem and Zador's Theorem on the cells where there is no break of affinity and then we control the error on the remaining cells using results on the local behaviour of the quantizer.

	\item Differentiable functions with piecewise-defined locally $\alpha$-H\"{o}lder derivative. The functions have $K$ breaks of affinity $\{ a_1, \dots, a_K \}$, such that $ - \infty = a_0 < a_1 < \cdots < a_K < a_{K+1} = + \infty $ and the locally $\alpha$-H\"{o}lder property of the derivative is defined by
	      \begin{equation*}
		      \forall k = 0, \dots, K, \quad \forall x,y \in (a_k, a_{k+1}), \quad \vert f' (x) - f' (y) \vert \leq [f']_{_{k, \alpha, loc}} \vert x - y \vert^{\alpha} \big(g_k(x) + g_k(y)\big)
	      \end{equation*}
	      where $g_k: (a_k,a_{k+1})\to \R_+$ are non-negative Borel functions. For this class of functions, the rate of convergence is of order $1+\alpha$. The result is obtained using the same ideas as in the locally Lipschitz case.
\end{itemize}

Hence, for all this classes of functions, except the last one, we have
\begin{equation*}
	\lim_{N \rightarrow + \infty} N^{2} \big\vert \E \big[ f(X) \big] - \E \big[ f ( \widehat X^N ) \big] \big\vert \leq C_{f, X} < + \infty.
\end{equation*}

In the second part of the paper we deal with the \textit{weak error expansion} of the approximation of $\E \big[ f (X) \big]$ by $\E \big[ f ( \widehat X^N) \big]$. First, we place ourselves in the one dimensional case by considering a twice differentiable function $f : \R \mapsto \R$ with a bounded Lipschitz continuous second derivative and $X : (\Omega, \A, \Prob) \rightarrow \R $. Through a second order Taylor expansion and with the help of Corollary \ref{EB:conditionalStationarity}, Theorem \ref{EB:distortimesf} and the $L^r$-$L^s$ distortion mismatch Theorem we obtain
\begin{equation*}
	\E \big[ f (X) \big] = \E \big[ f ( \widehat X^N ) \big] + \frac{c_2}{N^2} + O (N^{-(2 + \beta)})
\end{equation*}
where $\beta \in (0,1)$. This expression suggests to use a Richardson-Romberg extrapolation in order to \textit{kill} the first term of the expansion which yields
\begin{equation*}
	\E \big[ f ( X ) \big] = \E \Bigg[ \frac{M^2 f (\widehat X^M) - N^2 f (\widehat X^N)}{M^2 - N^2} \Bigg] + O ( N^{-(2+\beta)} ).
\end{equation*}
Second, we present a result in higher dimension when considering a twice differentiable function $f : \R^d \mapsto \R$ with a bounded Lipschitz continuous Hessian, $X : (\Omega, \A, \Prob) \rightarrow \R^d$ with independent components $(X_k)_{k = 1, \dots, d}$ and $\widehat X^{N}$ a product quantizer of $X$ with $d$ components $(\widehat X_k^{N_k})_{k = 1, \dots, d}$ such that $N_1 \times \cdots \times N_d \simeq N$. Using product quantizer allows us to rely on the one dimensional results for quadratic optimal quantizers and in that case we have
\begin{equation*}
	\E \big[ f(X) \big] = \E \big[ f(\widehat X^{N}) \big] + \sum_{k=1}^{d} \frac{c_k}{N_k^2} + O \bigg( \Big( \min_{k=1:d} N_k \Big)^{-(2+\beta)} \bigg).
\end{equation*}

The paper is organized as follows. First we recall some basic facts and deeper results about optimal quantization in Section \ref{EB:section:aboutoptimalquantization}. In Section \ref{EB:section:weakerror}, we present our new results on weak error for some classes of functions. Then, we see in Section \ref{EB:section:RR} how to derive \textit{weak error expansion} allowing us to specify the right hypothesis under which we can use a Richardson-Romberg extrapolation. Finally, we conclude with some applications. The first one is the introduction of our novel variance reduction involving optimal quantizers. The last one illustrates numerically the results shown in Section \ref{EB:section:weakerror} and \ref{EB:section:RR}, by considering a Black-Scholes model and pricing different types of European Options. We also propose a numerical example for the variance reduction.

\section{About optimal quantization ($d=1$)} \label{EB:section:aboutoptimalquantization}

Let $X$ be a $\R$-valued random variable with distribution $\Prob_{_{X}}$ defined on a probability space $ ( \Omega, \A, \Prob )$ such that $X \in L^2 ( \Prob )$.

\begin{definition}
	Let $\Gamma_N = \{ x_1^N, \dots, x_N^N \} \subset \R$ be a subset of size $N$, called $N$-quantizer. A Borel partition $\big( C_i (\Gamma_N) \big)_{i = 1, \dots, N}$ of $\R$ is a Vorono\"{i} partition of $\R$ induced by the $N$-quantizer $\Gamma_N$ if, for every $i = 1, \dots, N$,
	\begin{equation*}
		C_i (\Gamma_N) \subset \big\{ \xi \in \R, \vert \xi - x_i^N \vert \leq \min_{j \neq i }\vert \xi - x_j^N \vert \big\}.
	\end{equation*}
	The Borel sets $C_i (\Gamma_N)$ are called Vorono\"{i} cells of the partition induced by $\Gamma_N$.
\end{definition}

One can always consider that the quantizers are ordered: $x_1^N < x_2^N < \cdots < x_{N-1}^N < x_{N}^N $ and in that case the Vorono\"{i} cells are given by
\begin{equation*}
	C_{k} ( \Gamma_N ) = ( x_{k - 1/2}^N, x_{k + 1/2}^N ], \qquad k = 1, \dots, N-1, \qquad C_{N} ( \Gamma_N ) = ( x_{N - 1/2}^N, x_{N + 1/2}^N )
\end{equation*}
where $\forall k = 2, \dots, N, \, x_{k-1/2}^N := \frac{x_{k-1}^N + x_k^N}{2}$ and $x_{1/2}^N := \inf \big(\supp (\Prob_{_{X}}) \big)$ and $x_{N+1/2}^N := \sup \big(\supp (\Prob_{_{X}}) \big)$.

\begin{definition}
	Let $\Gamma_N = \{ x_1^N, \dots, x_N^N \}$ be an $N$-quantizer. The nearest neighbour projection $\Proj_{\Gamma_N} : \R \rightarrow \{ x_1^N, \dots, x_N^N \} $ induced by a Vorono\"{i} partition $\big( C_i (\Gamma_N)\big)_{i = 1, \dots, N}$ is defined by
	\begin{equation*}
		\forall \xi \in \R, \qquad \Proj_{\Gamma_N} (\xi) := \sum_{i = 1}^N x_i^N \1_{\xi \in C_i (\Gamma_N) }.
	\end{equation*}
	We can now define the quantization of $X$ by composing $\Proj_{\Gamma_N}$ and $X$
	\begin{equation*}
		\widehat X^{\Gamma_N} = \Proj_{\Gamma_N} (X) = \sum_{i = 1}^N x_i^N \1_{X \in C_i (\Gamma_N) }
	\end{equation*}
	and the point-wise error induced by the replacement of $X$ by $\widehat X^{\Gamma_N}$ given by
	\begin{equation*}
		\vert X - \widehat X^{\Gamma_N} \vert = \dist \big( X, \{ x_1^N, \dots, x_N^N \} \big) = \min_{i = 1, \dots, N} \vert X - x_i^N \vert.
	\end{equation*}
\end{definition}

In order to alleviate the notations, from now on we write $\widehat X^N$ in place of $\widehat X^{\Gamma_N}$.

\begin{definition}
	The $L^2$-mean (or mean quadratic) quantization error induced by the replacement of $X$ by the quantization of X using a $N$-quantizer $\Gamma_N \subset \R$ is defined as the quadratic norm of the point-wise error previously defined
	\begin{equation*}
		\Vert X - \widehat X^N \Vert_{_2} := \bigg( \E \Big[ \min_{i = 1, \dots, N} \vert X - x_i^N \vert^2 \Big] \bigg)^{1/2} = \bigg( \int_{\R} \min_{i = 1, \dots, N} \vert \xi - x_i^N \vert^2 \Prob_{_{X}} ( d \xi ) \bigg)^{1/2}.
	\end{equation*}

	It is convenient to define the quadratic distortion function at level $N$ as the squared mean quadratic quantization error on $(\R)^N$:
	\begin{equation*}
		\Distortion : x = (x_1^N, \dots, x_N^N ) \longmapsto \E \Big[ \min_{i = 1, \dots, N} \vert X - x_i^N \vert^2 \Big] = \Vert X - \widehat X^N \Vert_{_2}^2.
	\end{equation*}
\end{definition}

\begin{remark}
	All these definitions can be extended to the $L^p$ case. For example the $L^p$-mean quantization error induced by a quantizer of size $N$ is
	\begin{equation*}
		\Vert X - \widehat X^N \Vert_{_p} := \bigg( \E \Big[ \min_{i = 1, \dots, N} \vert X - x_i^N \vert^p \Big] \bigg)^{1/p} = \bigg( \int_{\R} \min_{i = 1, \dots, N} \vert X - x_i^N \vert^p \Prob_{_{X}} ( d \xi ) \bigg)^{1/p}.
	\end{equation*}
\end{remark}

We briefly recall some classical theoretical results, see \cite{graf2000foundations,pages2018numerical} for further details.

\begin{theorem}{(Existence of optimal N-quantizers)}\label{EB:existence}
	Let $X \in L^2 ( \Prob )$ and $N \in \Integer^*$.
	\begin{enumerate}[label=(\alph*)]
		\item The quadratic distortion function $\Distortion$ at level $N$ attains a minimum at an $N$-tuple $x^{(N)} = ( x_1^N, \dots, x_N^N )$ and $\Gamma_N = \{ x_i^{N}, i = 1, \dots, N \}$ is a quadratic optimal quantizer at level $N$.
		\item If the support of the distribution $\Prob_{_{X}}$ of $X$ has at least $N$ elements, then $x^{(N)} = ( x_1^N, \dots, x_N^N )$ has pairwise distinct components, $ \Prob_{_{X}} \big( C_i ( x^{(N)} ) \big) > 0, \,  i = 1, \dots, N$. Furthermore, the sequence $N \mapsto \inf_{x \in ( \R )^N} \Distortion(x)$ converges to $0$ and is decreasing as long as it is positive.
	\end{enumerate}
\end{theorem}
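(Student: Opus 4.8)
The plan is to combine a direct compactness-type argument for the distortion functional (part (a)) with a single perturbation lemma that exploits support points missed by a finite quantizer (part (b)). Throughout I write $e_N := \inf_{x\in(\R)^N}\Distortion(x)$; comparing with the constant $N$-tuple $(a,\dots,a)$ gives $e_N\le\E[|X-a|^2]<+\infty$ for any $a\in\R$, and padding an $N$-tuple with duplicate entries gives $e_{N+1}\le e_N$.

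For (a), I would first record that $\Distortion$ is continuous on $\R^N$: for fixed $\xi$ the map $x\mapsto\min_i|\xi-x_i|^2$ is continuous, and on any bounded subset of $\R^N$ it is dominated by $2|\xi|^2+C$, which is $\Prob_{_{X}}$-integrable since $X\in L^2(\Prob)$, so dominated convergence applies. Then I take a minimizing sequence $(x^{(n)})_n$ with $\Distortion(x^{(n)})\le\E[|X-a|^2]$ and, passing to a subsequence, arrange that every coordinate converges in $[-\infty,+\infty]$; after relabelling, say $x_1^{(n)},\dots,x_m^{(n)}$ converge to finite limits $x_1,\dots,x_m$ and $|x_i^{(n)}|\to\infty$ for $i>m$. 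If $m=N$, continuity finishes the proof. Otherwise, for each $\xi$ one has $\min_{i\le N}|\xi-x_i^{(n)}|^2\to\min_{i\le m}|\xi-x_i|^2$, which forces $m\ge 1$ (if $m=0$ this limit is $+\infty$ and Fatou contradicts the bound), and dominated convergence (same domination) gives $\Distortion(x^{(n)})\to\E[\min_{i\le m}|X-x_i|^2]$. This limit is the $m$-point distortion at $(x_1,\dots,x_m)$, hence $\ge e_m\ge e_N$; equality throughout shows $(x_1,\dots,x_m)$ is $m$-optimal, and padding it back to an $N$-tuple realizes $e_N$.

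The engine of (b) is the following perturbation lemma, which I would isolate first: if $\Gamma\subset\R$ is finite and $s_0\in\supp(\Prob_{_{X}})\setminus\Gamma$, then $\delta:=\dist(s_0,\Gamma)>0$ and $\Prob_{_{X}}(B(s_0,\delta/3))>0$, while on $B(s_0,\delta/3)$ one has $\dist(\xi,\Gamma\cup\{s_0\})^2\le\delta^2/9<4\delta^2/9\le\dist(\xi,\Gamma)^2$ and $\dist(\cdot,\Gamma\cup\{s_0\})\le\dist(\cdot,\Gamma)$ everywhere; integrating yields $\Distortion(\Gamma\cup\{s_0\})<\Distortion(\Gamma)$. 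Now if $\supp(\Prob_{_{X}})$ has at least $N$ points and some coordinate of the optimal $x^{(N)}$ were repeated, then $\{x_i^N\}$ would have at most $N-1$ elements, hence miss a support point, and the lemma would produce an $N$-point set beating $e_N$ — impossible. Likewise, if $\Prob_{_{X}}(C_{i_0}(x^{(N)}))=0$ for some $i_0$, deleting $x_{i_0}^N$ changes the distortion only on $C_{i_0}(x^{(N)})$, which is $\Prob_{_{X}}$-negligible by assumption, so it produces an $(N-1)$-point set of distortion $e_N$ that still misses a support point (as $\supp(\Prob_{_{X}})$ has $\ge N$ points), and the lemma again beats $e_N$.

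Finally, for $e_N\to0$ I would fix $\varepsilon>0$, choose $R$ with $\E[|X|^2\1_{\{|X|>R\}}]<\varepsilon$ using $X\in L^2(\Prob)$, take $\Gamma_N$ an equispaced grid of $[-R,R]$ so that $\dist(\xi,\Gamma_N)\le R/N$ on $[-R,R]$ and $\le|\xi|$ outside, obtain $e_N\le R^2/N^2+\varepsilon$, and let $N\to\infty$ then $\varepsilon\to0$. For strict monotonicity, note that $e_N>0$ forces $\supp(\Prob_{_{X}})$ to have at least $N+1$ points (otherwise a quantizer supported on the support gives $e_N=0$), so by (b) the optimal $\Gamma_N$ has $N$ distinct points and misses a support point, and the perturbation lemma gives an $(N+1)$-point set with distortion $<e_N$, i.e. $e_{N+1}<e_N$. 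I expect the only genuinely delicate point to be the degeneration step in (a) — ruling out a minimizing sequence escaping to infinity — and the "drop the divergent coordinates, then pad back using $e_m\ge e_N$" device is precisely what handles it.
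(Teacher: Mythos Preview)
The paper does not prove this theorem; it is stated as a classical background result with a reference to \cite{graf2000foundations,pages2018numerical} for the details. Your argument is correct and follows essentially the standard route found in those references: a compactness argument for existence (with the ``drop the divergent coordinates, compare $e_m\ge e_N$, then pad back'' device handling possible escape to infinity), and a single strict-improvement perturbation lemma (adjoining a missed support point strictly lowers the distortion) that drives all three claims in (b). The only slightly loose phrasing is the ``same domination'' in the degeneration step of (a): once some coordinates diverge you are no longer on a bounded subset of $\R^N$, but the bound survives because at least one coordinate $x_1^{(n)}$ stays bounded, so $\min_i|\xi-x_i^{(n)}|^2\le|\xi-x_1^{(n)}|^2\le 2|\xi|^2+2\sup_n|x_1^{(n)}|^2$, which is integrable.
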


Following the existence of a minimum for $\Distortion$ at $x^{(N)}$, we can define an optimal quadratic $N$-quantizer.
\begin{definition}
	A grid associated to any $N$-tuple solution to the above distortion minimization problem is called an optimal quadratic $N$-quantizer.
\end{definition}

A really interesting and useful property concerning quadratic optimal quantizers is the stationarity property.

\begin{proposition}{(Stationarity)}
	Assume that the support of $\Prob_{_{X}}$ has at least $N$ elements. Any $L^2$-optimal $N$-quantizer $\Gamma_N \in ( \R )^N$ is stationary in the following sense: for every Vorono\"{i} quantization $\widehat X^N$ of $X$,
	\begin{equation*}
		\E \big[ X \mid \widehat X^N \big] = \widehat X^N.
	\end{equation*}
\end{proposition}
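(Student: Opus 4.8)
The plan is to exploit the defining property of a Voronoï partition --- that on the cell $C_i(\Gamma_N)$ the nearest neighbour is exactly $x_i^N$ --- to rewrite the distortion of the optimal quantizer as a sum of local $L^2$-errors, and then to run a one-coordinate-at-a-time comparison argument. Fix an $L^2$-optimal quantizer $\Gamma_N = \{x_1^N,\dots,x_N^N\}$ together with a Voronoï partition $(C_i)_{i=1:N} := \big(C_i(\Gamma_N)\big)_{i=1:N}$. Since $(C_i)_i$ is a Borel partition of $\R$, for \emph{any} $N$-tuple $y = (y_1,\dots,y_N)$ one has the pointwise bound $\min_{j}|X - y_j|^2 \le \sum_{i=1}^N |X - y_i|^2\1_{\{X\in C_i\}}$, with equality when $y = x^{(N)}$ because, by definition of the Voronoï cells, $|X - x_i^N| \le |X - x_j^N|$ for every $j$ on $\{X\in C_i\}$. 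Taking expectations gives the identity $\Distortion(x^{(N)}) = \sum_{i=1}^N \E\big[|X - x_i^N|^2\1_{\{X\in C_i\}}\big]$ and the inequality $\Distortion(y) \le \sum_{i=1}^N \E\big[|X - y_i|^2\1_{\{X\in C_i\}}\big]$ valid for all $y$.

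Next I would perturb a single coordinate: fix $i$, let $y_i\in\R$ be arbitrary and $y_j = x_j^N$ for $j\neq i$. Optimality of $x^{(N)}$ yields $\Distortion(x^{(N)}) \le \Distortion(y)$; combining this with the identity and the inequality above and cancelling the terms indexed by $j\neq i$ --- which are finite since $X\in L^2(\Prob)$ --- leaves
\[
\E\big[|X - x_i^N|^2\1_{\{X\in C_i\}}\big] \le \E\big[|X - y_i|^2\1_{\{X\in C_i\}}\big] \qquad\text{for every } y_i\in\R .
\]
Hence $x_i^N$ minimises the quadratic map $y_i \mapsto \Prob(C_i)\,y_i^2 - 2y_i\,\E[X\1_{\{X\in C_i\}}] + \E[X^2\1_{\{X\in C_i\}}]$. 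Because the support of $\Prob_{_X}$ has at least $N$ elements, Theorem~\ref{EB:existence}(b) ensures $\Prob(C_i) = \Prob_{_X}\big(C_i(x^{(N)})\big) > 0$, so this parabola is strictly convex and its unique minimiser is $x_i^N = \E[X\1_{\{X\in C_i\}}]/\Prob(C_i) = \E[X \mid X \in C_i]$, for each $i=1,\dots,N$.

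Finally I would assemble the pieces. By Theorem~\ref{EB:existence}(b) the components $x_i^N$ are pairwise distinct, so $\{\widehat X^N = x_i^N\} = \{X\in C_i\}$ and $\sigma(\widehat X^N)$ is generated by the (finite) partition formed by the events $\{X\in C_i\}$ of positive probability. Conditional expectation with respect to a $\sigma$-algebra generated by such a partition is constant on each atom and equals the average of $X$ there, so
\[
\E\big[X \mid \widehat X^N\big] = \sum_{i=1}^N \E[X \mid X \in C_i]\,\1_{\{X\in C_i\}} = \sum_{i=1}^N x_i^N\,\1_{\{X\in C_i\}} = \widehat X^N ,
\]
which is the claimed stationarity. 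Since the reasoning applies to any Voronoï quantization attached to $\Gamma_N$, the statement follows in full.

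The argument is elementary once Theorem~\ref{EB:existence}(b) is available; the only point demanding a little care is the single-coordinate reduction, where one must check that the cross terms being cancelled are finite (this is exactly where $X\in L^2(\Prob)$ enters) and that the comparison $\Distortion(y)\le \sum_i \E[|X-y_i|^2\1_{\{X\in C_i\}}]$ holds for the perturbed tuple even though $(C_i)_i$ is not a Voronoï partition for $y$ --- only the partition property is used. An alternative route would be to prove that $\Distortion$ is differentiable at $N$-tuples with pairwise distinct components (the boundary contributions cancelling thanks to the equidistance property of the Voronoï frontiers) and to read off $x_i^N\,\Prob(C_i) = \E[X\1_{\{X\in C_i\}}]$ from $\nabla\Distortion(x^{(N)}) = 0$; I would favour the variational argument above, as it avoids establishing differentiability.
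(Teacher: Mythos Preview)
The paper does not actually supply a proof of this proposition: it is listed among the ``classical theoretical results'' recalled from \cite{graf2000foundations,pages2018numerical} and stated without argument. Your variational proof is correct and is essentially the standard one found in those references --- the key chain $\Distortion(x^{(N)}) = \sum_i \E[|X-x_i^N|^2\1_{C_i}] \le \Distortion(y) \le \sum_i \E[|X-y_i|^2\1_{C_i}]$, followed by single-coordinate optimisation and the identification of the conditional expectation on the atoms of $\sigma(\widehat X^N)$, is exactly how the result is established in, e.g., \cite{pages2018numerical}. The alternative differentiability route you mention is also classical and leads to the same conclusion via $\nabla\Distortion(x^{(N)})=0$; your choice of the comparison argument is cleaner here since it sidesteps the boundary-term discussion.
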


\begin{cor}\label{EB:conditionalStationarity}
	If $\widehat X^N$ is a $L^2$-optimal quantization of $X$, hence has the above stationarity property, and $f(X) \in L^2 ( \Prob )$ with $f:\R \rightarrow \R$ then
	\begin{equation*}
		\E \big[ f (\widehat X^N ) (X - \widehat X^N ) \big] = 0.
	\end{equation*}
\end{cor}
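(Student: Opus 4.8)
The plan is to reduce the statement to the stationarity property of $\widehat X^N$ by conditioning on $\widehat X^N$. First I would check integrability so that the expectation is well defined: since $\widehat X^N$ takes its values in the finite grid $\Gamma_N = \{x_1^N,\dots,x_N^N\}$, the random variable $f(\widehat X^N)$ is bounded by $\max_{1\le i\le N}\vert f(x_i^N)\vert$, and $X - \widehat X^N \in L^2(\Prob) \subset L^1(\Prob)$ because $X \in L^2(\Prob)$ and $\widehat X^N$ is bounded; hence $f(\widehat X^N)(X - \widehat X^N) \in L^1(\Prob)$. Note that no regularity of $f$ is actually used here.

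Next, since $f(\widehat X^N)$ is $\sigma(\widehat X^N)$-measurable and bounded, I would pull it out of the conditional expectation given $\widehat X^N$ and apply the tower property:
\begin{equation*}
	\E\big[f(\widehat X^N)(X - \widehat X^N)\big] = \E\Big[\E\big[f(\widehat X^N)(X - \widehat X^N)\,\big|\,\widehat X^N\big]\Big] = \E\Big[f(\widehat X^N)\,\E\big[X - \widehat X^N\,\big|\,\widehat X^N\big]\Big].
\end{equation*}
By linearity of conditional expectation and $\sigma(\widehat X^N)$-measurability of $\widehat X^N$, one has $\E[X - \widehat X^N \mid \widehat X^N] = \E[X \mid \widehat X^N] - \widehat X^N = 0$ by the stationarity property of the $L^2$-optimal quantizer. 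Substituting back yields $\E[f(\widehat X^N)(X - \widehat X^N)] = 0$.

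Alternatively, one can argue cellwise: writing $\widehat X^N = \sum_{i=1}^N x_i^N \1_{\{X \in C_i(\Gamma_N)\}}$ gives $\E[f(\widehat X^N)(X - \widehat X^N)] = \sum_{i=1}^N f(x_i^N)\,\E[(X - x_i^N)\1_{\{X \in C_i(\Gamma_N)\}}]$, and each summand vanishes because the stationarity identity $\E[X \mid \widehat X^N] = \widehat X^N$ is precisely $\E[X\1_{\{X\in C_i(\Gamma_N)\}}] = x_i^N\,\Prob(X \in C_i(\Gamma_N))$ for every cell, trivially so when $\Prob(X \in C_i(\Gamma_N)) = 0$. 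There is no real obstacle in this corollary; the only mild point of care is the integrability check above, together with making sure that the Vorono\"{i} quantization $\widehat X^N$ at hand is the one for which stationarity is asserted (which is guaranteed by the preceding Proposition, stated for \emph{every} Vorono\"{i} quantization of $X$).
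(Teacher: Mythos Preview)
Your proof is correct and follows essentially the same route as the paper: condition on $\widehat X^N$, pull out the $\sigma(\widehat X^N)$-measurable factor $f(\widehat X^N)$, and use the stationarity identity $\E[X\mid \widehat X^N]=\widehat X^N$. Your added integrability check and the alternative cellwise argument are nice complements, but the core argument is identical to the paper's.
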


\begin{proof}
	The proof is straightforward, indeed
	\begin{equation*}
		\begin{aligned}
			\E \big[ f (\widehat X^N ) (X - \widehat X^N ) \big]
			 & = \E \Big[ \E \big[ f(\widehat X^N) (X - \widehat X^N ) \mid \widehat X^N \big] \Big] = \E \big[ f(\widehat X^N) \E [ X - \widehat X^N \mid \widehat X^N ] \big] \\
			 & = \E \Big[ f(\widehat X^N) \big( \E \big[ X \mid \widehat X^N \big] - \widehat X^N \big) \Big] = 0.
		\end{aligned}
	\end{equation*}
\end{proof}
We now take a look at the asymptotic behaviour in $N$ of the quadratic mean quantization error. We saw in Theorem \ref{EB:existence} that the infimum of the quadratic distortion converges to $0$ as $N$ goes to infinity. The next Theorem, known as Zador's Theorem, analyzes the rate of convergence of the $L^p$-mean quantization error.

\begin{theorem}{(Zador's Theorem)}\label{EB:zador} Let $p \in (0, + \infty)$.
	\begin{enumerate}[label=(\alph*)]
		\item {\sc Sharp rate}. Let $X \in L^{p+ \delta}(\Prob)$ for some $\delta > 0$. Let $\Prob_{_{X}} (d \xi) = \varphi(\xi) \cdot \lambda ( d \xi ) + \nu ( d \xi ) $, where $\nu ~ \bot ~ \lambda$ is the singular component of $\Prob_{_{X}}$ with respect to the Lebesgue measure $\lambda$ on $\R$. Then
		      \begin{equation*}
			      \lim_{N \rightarrow + \infty} N \min_{\Gamma_N \subset \R, \vert \Gamma_N \vert \leq N } \Vert X - \widehat X^N \Vert_{_p} = \widetilde{J}_{p,1} \bigg[ \int_{\R} \varphi^{\frac{1}{1+p}} d \lambda \bigg]^{1+\frac{1}{p} }
		      \end{equation*}
		      with $\widetilde{J}_{p,1} = \frac{1}{2^p (p+1)}$.

		\item {\sc Non asymptotic upper-bound}. Let $\delta > 0$. There exists a real constant $C_{1,p,\delta} \in (0, +\infty )$ such that, for every $\R$-valued random variable $X$,
		      \begin{equation*}
			      \forall N \geq 1, \qquad \min_{\Gamma_N \subset \R, \vert \Gamma_N \vert \leq N } \Vert X - \widehat X^N \Vert_{_p} \leq C_{1,p,\delta} \sigma_{\delta+p} (X) N^{- 1}
		      \end{equation*}
		      where, for $r \in (0, + \infty), \, \sigma_r(X) = \min_{a \in \R} \Vert X - a \Vert_{_r} < + \infty$.
	\end{enumerate}
\end{theorem}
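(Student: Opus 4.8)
Since this is the classical Zador theorem I would not look for a new argument but reproduce the standard proof. The plan is to establish the non-asymptotic bound (b) first, then use it to reduce (a) to a model situation, and finally prove (a) by a matching pair of asymptotic upper and lower estimates. Throughout I write $e_{N,p}(Y):=\min_{\Gamma_N\subset\R,\,|\Gamma_N|\le N}\Vert Y-\widehat Y^N\Vert_{_p}$ and use repeatedly that $e_{N,p}$ is nondecreasing when the integrating measure increases setwise (because $\xi\mapsto\min_i|\xi-x_i|^p$ is integrated against $\Prob_{_X}$), together with its invariance under translation and its homogeneity under scaling.

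For (b): after replacing $X$ by $X-a$ for an optimal $a$ in the definition of $\sigma_{p+\delta}(X)$ and rescaling, I may assume $\Vert X\Vert_{p+\delta}=1$, the asserted inequality being invariant under these operations. Split $\R$ into the central block $B_0=[-1,1]$ and the dyadic shells $B_j=\{\,2^{j-1}\le|x|<2^{j}\,\}$, $j\ge1$; Markov's inequality gives $\Prob(X\in B_j)\le 2^{-(j-1)(p+\delta)}$. Place $N_j\asymp N\,2^{-\gamma j}$ equispaced points inside each $B_j$, normalised so that $\sum_{j\ge0}N_j\le N$ (hence $N_0\asymp N$); the local error on $B_j$ is then at most $\asymp 2^{j}/N_j$, whence
\begin{equation*}
e_{N,p}(X)^p \ \le\ \frac{c}{N_0^{\,p}}+\sum_{j\ge1}\Prob(X\in B_j)\Big(\frac{2^{j}}{N_j}\Big)^{p} \ \le\ \frac{C}{N^{p}}\Big(1+\sum_{j\ge1}2^{\,j(\gamma p-\delta)}\Big).
\end{equation*}
Choosing $0<\gamma<\delta/p$ makes the geometric series converge, which is exactly (b). (Equivalently one may run Pierce's telescoping estimate directly on the quantile function $F_X^{-1}$.)

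For the upper bound in (a), I would start from a density $\varphi$ that is a finite step function, equal to $h_m>0$ on an interval of length $\ell_m$ for $m=1,\dots,M$. Placing $N_m$ equispaced points in the $m$-th interval contributes $\dfrac{h_m\ell_m^{\,p+1}}{2^{p}(p+1)\,N_m^{\,p}}$ to the $p$-distortion, so minimising $\sum_m h_m\ell_m^{\,p+1}N_m^{-p}$ under $\sum_m N_m=N$ by Hölder's inequality forces $N_m\propto(h_m\ell_m^{\,p+1})^{1/(p+1)}$ and gives
\begin{equation*}
e_{N,p}(X)^p \ \le\ \frac{1}{2^{p}(p+1)\,N^{p}}\Big(\sum_{m}h_m^{1/(p+1)}\ell_m\Big)^{1+p}(1+o(1)) \ =\ \frac{1}{2^{p}(p+1)\,N^{p}}\Big(\int_{\R}\varphi^{\frac{1}{1+p}}d\lambda\Big)^{1+p}(1+o(1)),
\end{equation*}
i.e. the asserted constant $\widetilde{J}_{p,1}$ after taking $p$-th roots (the uniform law, for which everything is exact, pins $\widetilde{J}_{p,1}$ down). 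A general bounded, compactly supported density is squeezed between step functions with $\int\varphi_k^{1/(1+p)}\to\int\varphi^{1/(1+p)}$, and the setwise monotonicity of $e_{N,p}$ transfers the bound. Finally, for general $X\in L^{p+\delta}$, I truncate to $\{|X|\le R\}$ and control the tail by (b) applied to $X\mathds{1}_{\{|X|>R\}}$, whose $\sigma_{p+\delta}$ tends to $0$ as $R\to\infty$, while the singular part $\nu$ contributes $o(N^{-1})$ by the classical argument of \cite{graf2000foundations} (since $\nu\perp\lambda$, $\nu$ can be covered by an open set of arbitrarily small Lebesgue measure).

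For the lower bound in (a) — the genuinely delicate step — after a mollification reducing to $\varphi$ continuous, fix $\eta>0$ and partition a large interval carrying almost all the mass into subintervals $I_j$ of length $\eta$, with $\varphi(\xi_j)\approx\Prob(I_j)/\eta$. For any $\Gamma_N$, only the quantizer points in a slight enlargement of $I_j$ are within distance $<\eta$ of its centre, so the distortion over $I_j$ is at least that of an optimal $N_j$-quantizer of the uniform law on $I_j$, namely $\gtrsim\varphi(\xi_j)\,\eta\cdot\dfrac{\eta^{p}}{2^{p}(p+1)\,N_j^{\,p}}$ with $\sum_j N_j\lesssim N$; summing and applying the power-mean inequality as in the allocation step gives
\begin{equation*}
N^{p}e_{N,p}(X)^{p}\ \gtrsim\ \frac{1}{2^{p}(p+1)}\Big(\sum_j \varphi(\xi_j)^{\frac1{1+p}}\,\eta\Big)^{1+p}\ \xrightarrow[\eta\to 0]{}\ \frac{1}{2^{p}(p+1)}\Big(\int_{\R}\varphi^{\frac1{1+p}}d\lambda\Big)^{1+p},
\end{equation*}
so $\liminf_N N\,e_{N,p}(X)\ge\widetilde{J}_{p,1}\big(\int_{\R}\varphi^{1/(1+p)}d\lambda\big)^{1+1/p}$; since the tails and the singular part only add to the distortion, this holds for the full law, and combined with the upper bound it proves (a). The main obstacles I anticipate are making the ``points outside $I_j$ are far'' reduction fully rigorous (handling cells receiving no quantizer point and the boundary interaction between adjacent cells) and the precise estimate $e_{N,p}(\nu)=o(N^{-1})$ for the singular component; both are standard but technically fiddly.
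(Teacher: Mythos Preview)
The paper does not prove Zador's Theorem at all: it is stated in Section~\ref{EB:section:aboutoptimalquantization} as a classical background result, introduced by ``We briefly recall some classical theoretical results, see \cite{graf2000foundations,pages2018numerical} for further details,'' and no proof is given. So there is nothing to compare your argument against in this paper.

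Your sketch is essentially the standard Graf--Luschgy/Pierce proof that those references contain: the dyadic-shell allocation for the non-asymptotic bound~(b), the step-function calibration plus truncation-and-tail-control for the upper bound in~(a), and a localisation/power-mean argument for the lower bound. The outline is sound and you correctly flag the two genuinely technical points (the boundary interaction in the lower bound and the $o(N^{-1})$ estimate for the singular part). One minor remark: the constant $\widetilde{J}_{p,1}$ as stated in the paper is the $p$-th power of what your computation produces after ``taking $p$-th roots''; this is a normalisation slip in the paper's statement rather than an error in your argument.
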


Now, we state some intuitive but remarkable results concerning the local behaviour of the optimal quantizers.

\begin{lemme}\label{EB:unioncellsincompactiscompact}
	Let $\Prob_{_{X}}$ be a distribution on the real line with connected support $I_{\Prob_{_{X}}}:=\supp ( \Prob_{_{X}} )$. Let $\Gamma_N = \{ x_1^N, \dots, x_N^N \}$ be a sequence of $r$-optimal quantizers, $r>0$. Let $[a,b]$, be a closed interval then
	\begin{equation*}
		\bigcup_N \bigcup_{C_i ( \Gamma_N ) \cap [a,b] \neq \emptyset} C_i ( \Gamma_N ) \subset K_0
	\end{equation*}
	where $K_0$ is a compact set.
\end{lemme}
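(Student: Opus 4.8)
The plan is to show that the union of all Voronoï cells (over all levels $N$) that meet $[a,b]$ stays inside a fixed compact set, by controlling how far such a cell can reach. The key quantitative input is Zador's non-asymptotic upper bound (Theorem \ref{EB:zador}(b)): there is a constant $C$ such that $\Vert X - \widehat X^N \Vert_{_r} \leq C N^{-1}$ for every $N$, hence in particular $\Vert X - \widehat X^1 \Vert_{_r} = \sigma_r(X) < +\infty$, so $\Vert X - \widehat X^N \Vert_{_r} \leq \sigma_r(X)$ for all $N \geq 1$. The point is that the $L^r$-error is uniformly bounded in $N$.

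First I would fix $N$ and a cell $C_i(\Gamma_N)$ with $C_i(\Gamma_N) \cap [a,b] \neq \emptyset$, and argue that the center $x_i^N$ cannot be too far from $[a,b]$. Indeed, if $x_i^N$ were very far to the right of $b$ (the symmetric argument handles the left side), then on the event $\{X \in (-\infty, a]\}$ — which has positive probability $q := \Prob_{_X}((-\infty,a] \cap I_{\Prob_{_X}}) > 0$ as soon as $a$ is chosen inside (or to the right of the left endpoint of) the connected support, and one may enlarge $[a,b]$ harmlessly so that this holds — the nearest neighbour projection of $X$ is at distance at least $\mathrm{dist}(x_i^N, (-\infty,a])$ minus the reach of the neighbouring cells; more cleanly, since $C_i(\Gamma_N)$ meets $[a,b]$, its left endpoint $x_{i-1/2}^N \leq b$, so $x_{i-1}^N \leq 2b - x_i^N$, and iterating, all centers $x_j^N$ with $j \leq i$ that could serve a point in $(-\infty,a]$ are pushed to the right as $x_i^N \to +\infty$. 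The cleanest formulation: if $x_i^N \geq b + t$ for some $t > 0$, then because the left endpoint of $C_i$ is $\leq b$, we have $x_{i-1}^N \leq b - t$ as well (reflection through the midpoint $\leq b$), so there is a "gap" and one shows $\Vert X - \widehat X^N\Vert_r^r \geq \int_{(b-t, b]} \min_j|\xi - x_j^N|^r \Prob_{_X}(d\xi) \geq (t/2)^r \Prob_{_X}((b-t,b])$ when the support forces mass there, or alternatively the mass of $(-\infty, b-t]$ is served by centers $\leq b - t$, none of which can be $x_i^N$, forcing a contradiction with the uniform bound $\Vert X-\widehat X^N\Vert_r \leq \sigma_r(X)$ once $t$ is large enough that $(t/2)^r \cdot (\text{fixed positive mass}) > \sigma_r(X)^r$.

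From such a bound $|x_i^N| \leq M_0$ (uniform in $N$ and in $i$ with $C_i(\Gamma_N) \cap [a,b] \neq \emptyset$), I would then bound the cell itself: a cell $C_i(\Gamma_N)$ has the form $(x_{i-1/2}^N, x_{i+1/2}^N]$, and its right endpoint $x_{i+1/2}^N = (x_i^N + x_{i+1}^N)/2$; the neighbouring center $x_{i+1}^N$ could a priori be large, but again, if $x_{i+1}^N$ is too far right then the cell $C_{i+1}$ has huge radius on its left half and picking up the positive mass of $\Prob_{_X}$ near $x_i^N$ (which lies in $[a - \text{const}, b + \text{const}]$, a region of positive mass by connectedness of the support) contradicts the uniform $L^r$ bound. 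Thus both endpoints of any such cell lie in some $[-M, M] =: K_0$, which is the desired compact set. The main obstacle is the bookkeeping in the reflection/midpoint argument to turn "the cell meets $[a,b]$" into a usable lower bound on the quantization error when a center escapes to infinity; once that is set up, invoking the uniform bound from Zador's theorem closes the argument. One should also handle the boundary conventions for $C_1$ and $C_N$ (where $x_{1/2}^N$ and $x_{N+1/2}^N$ are the support endpoints), but if the support is bounded there is nothing to prove, and if it is unbounded on one side the same escape-to-infinity argument applies verbatim.
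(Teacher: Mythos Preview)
Your overall strategy is different from the paper's. The paper never bounds the centers first; it directly controls the right edge $x_{i_b+1/2}^N$ of the cell containing $b$ by a measure--theoretic comparison based on the weak convergence $\Prob_{\widehat X^N}\Rightarrow \Prob_X$: from $\Prob_X\big([b_0,x_{i_b+1/2}^N]\big)\le \Prob_{\widehat X^N}\big([b_0,b]\big)\to \Prob_X\big([b_0,b]\big)<\Prob_X\big([b_0,\infty)\big)$ one reads off $\sup_N x_{i_b+1/2}^N<\infty$. Your route via the quantization error is a legitimate alternative for the \emph{first} step, but the second step---bounding the cell edge---has a genuine gap as written.

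Your argument that the centers $x_i^N$ of cells meeting $[a,b]$ are uniformly bounded is essentially correct: if $x_i^N\ge b+t$ while $C_i(\Gamma_N)$ meets $[a,b]$, then $x_{i-1/2}^N<b$, hence $x_{i-1}^N\le b-t$, so no center lies in $(b-t,b+t)$ and a fixed positive mass near $b$ is quantized at distance $\gtrsim t$, contradicting $\|X-\widehat X^N\|_r\le\sigma_r(X)$ for $t$ large. (Minor quibble: the lower bound $(t/2)^r\,\Prob_X\big((b-t,b]\big)$ is not right as stated; restrict to $(b-t/2,b]$.)

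The gap is in the edge step. You write that if $x_{i+1}^N$ is far to the right, ``the cell $C_{i+1}$ has huge radius on its left half and [picks] up the positive mass of $\Prob_X$ near $x_i^N$''. But points near $x_i^N$ lie in $C_i$, not in $C_{i+1}$; the left edge of $C_{i+1}$ is the midpoint $x_{i+1/2}^N=\tfrac12(x_i^N+x_{i+1}^N)$, which itself goes to $+\infty$ with $x_{i+1}^N$. The mass $\Prob_X$ places near that midpoint may decay faster than any polynomial (exponential or Gaussian tails), so a bound of the form $(\text{large distance})^r\times(\text{mass near the midpoint})$ does \emph{not} contradict the merely uniform estimate $\|X-\widehat X^N\|_r^r\le\sigma_r(X)^r$. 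The uniform bound alone is too weak to control $x_{i+1/2}^N$.

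A correct fix within your framework uses the stronger input $\|X-\widehat X^N\|_r\to 0$ (Zador actually gives the rate $N^{-1}$). With $|x_i^N|\le M_0$ already established, if $x_{i+1/2}^N>M_0+2$ then the \emph{fixed} interval $[M_0+1,M_0+2]$ lies in $C_i(\Gamma_N)$ and every point there is at distance $\ge 1$ from its nearest center, so $\|X-\widehat X^N\|_r^r\ge \Prob_X\big([M_0+1,M_0+2]\big)=:q'>0$; this is impossible for large $N$. The paper's weak--convergence argument reaches the same conclusion without splitting into a center step and an edge step.
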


\begin{proof}
	First, if $+ \infty \notin \overline{I_{\Prob_{_{X}}}}$ then the upper-bound of $K_0$ is the upper-bound of $\overline{I_{\Prob_{_{X}}}}$ otherwise if $+ \infty \in \overline{I_{\Prob_{_{X}}}}$, let $b_0 \in I_{\Prob_{_{X}}}$ such that $b_0 < b$, as $\Prob_{_{X}}$ has a density, then $\Prob_{_{X}}\big( \{ b_{0} \}\big) = \Prob_{_{X}}\big( \{ b \}\big) = 0$. Considering the weighted empirical measure
	\begin{equation*}
		\Prob_{_{\widehat X^N}} := \sum_{i = 1}^{N} \Prob_{_{X}} \big( C_i ( \Gamma_N ) \big) \delta_{x_i^N} \xrightarrow{N \rightarrow +\infty } \Prob_{_{X}}
	\end{equation*}
	then $\Prob_{_{\widehat X^N}} \big( [b_0, b ] \big) \xrightarrow{N \rightarrow +\infty } \Prob_{_{X}} \big( [b_0, b ] \big) < \Prob_{_{X}} \big( [b_0, +\infty ) \big)$. Moreover, one notices that
	\begin{equation*}
		\Prob_{_{\widehat X^N}} \big( [b_0, b] \big) = \Prob_{_{X}} \left( \bigcup_{i \in \{ i_{b_0}, \dots, i_{b} \} } C_i ( \Gamma_N ) \right) = \Prob_{_{\widehat X^N}} \left( \bigcup_{i \in \{ i_{b_0}, \dots, i_{b} \} } C_i ( \Gamma_N ) \right)
	\end{equation*}
	where $x_{i_{u}}^N$ is the centroid of the cell that contains $u$. Then, as $[ b_0, x_{i_{b} + 1/2}^N ] \subset \bigcup_{i \in \{ i_{b_0}, \dots, i_{b} \} } C_i ( \Gamma_N )$
	\begin{equation*}
		\Prob_{_{X}} \big( [ b_0, x_{i_{b} + 1/2}^N ] \big) \leq \Prob_{_{\widehat X^N}} \big( [b_0, b ] \big) \xrightarrow{N \rightarrow +\infty } \Prob_{_{X}} \big( [b_0, b ] \big) < \Prob_{_{X}} \big( [b_0, +\infty ) \big)
	\end{equation*}
	hence, $\limsup_{N} x_{i_{b} + 1/2}^N < + \infty$ and $\sup_{N} x_{i_{b} + 1/2}^N < + \infty$, which gives us the upper-bound of $K_0$: $\sup_{N} x_{i_{b} + 1/2}^N$.

	Finally, if $- \infty \notin \overline{I_{\Prob_{_{X}}}}$ then the lower-bound of $K_0$ is the lower-bound of $\overline{I_{\Prob_{_{X}}}}$ otherwise if $- \infty \in \overline{I_{\Prob_{_{X}}}}$, then following the same idea as above, we can apply the same deductions in order to show that $\inf_{N} x_{i_{a}-1/2}^N > - \infty$ which gives us the lower-bound of $K_0$: $\inf_{N} x_{i_{a} - 1/2}^N$.
	In conclusion, $K_0 := \supp(\Prob_{_{X}}) \bigcap [ \inf_{N} x_{i_{a}-1/2}^N, \sup_{N} x_{i_{b} + 1/2}^N ]$.
\end{proof}

The next result, proved in \cite{localdistor}, deals with the local behaviour of optimal quantizer, more precisely it characterises the rate of convergence, in function of $N$, of the weights and the local distortions associated to an optimal quantizer. This is the key result of the first part of this paper. It allows us to extend the weak error bound of order two to less regular functions than those originally considered in \cite{pages1998space}, namely differentiable functions with Lipschitz continuous derivative.

\begin{theorem}{(Local behaviour of optimal quantizers)}\label{EB:local}
	Let $\Prob_{_{X}}$ be a distribution on the real line with connected support $\supp(\Prob_{_{X}})$. Assume that $\Prob_{_{X}}$ has a probability density function $\varphi$ which is positive and Lipschitz continuous on every compact set of the interior $(\underline m,\overline m)$ of $\supp(\Prob_{_{X}})$. Let $\Gamma_N = \{ x_1^N, \dots, x_N^N \}$ be a sequence of stationary and $L^r$ optimal quantizers, $r>0$.
	\begin{enumerate}[label=(\alph*)]
		\item The sequence of functions $(\psi_N)_{N \geq 1}$ defined by
		      \begin{equation*}
			      \psi_N(\xi) := N \sum_{i = 1}^N \1_{C_i ( \Gamma_N )}(\xi) \Prob_{_{X}} \big(C_i(\Gamma_N)\big), \quad N \geq 1,
		      \end{equation*}
		      converges uniformly on compact sets of $(\underline m,\overline m)$ towards $c_{\varphi, 1/(r+1)} \varphi^{\frac{r}{r+1}}$, with $c_{\varphi, 1/(r+1)} = \Vert \varphi \Vert_{_{1/(1+r)}}^{-1/(1+r)}$ i.e., for every $[a,b] \subset (\underline m,\overline m)$, $a<b$,
		      \begin{equation}\label{EB:cvgeCell}
			      \sup_{ \{ i : x_i^N \in [a,b] \}} \Big\vert N \Prob_{_{X}} \big( C_i ( \Gamma_N ) \big) - c_{\varphi, 1/(r+1)} \varphi^{\frac{r}{r+1}} ( x_i^N ) \Big\vert \xrightarrow{N \rightarrow +\infty } 0.
		      \end{equation}
		      The local distortion is asymptotically uniformly distributed i.e., for every $[a,b] \subset (\underline m,\overline m)$,
		      \begin{equation}\label{EB:cvgeDist}
			      \sup_{ \{ i : x_i^N \in [a,b] \}} \bigg\vert N^{r+1} \int_{C_i ( \Gamma_N )} \vert x_i^N - \xi \vert^r \Prob_{_{X}} ( d \xi ) - \frac{\Vert \varphi \Vert_{_{1/(r+1)}} }{2^r (r+1) } \bigg\vert \xrightarrow{N \rightarrow +\infty } 0.
		      \end{equation}
		\item Moreover, if $\Prob_{_{X}}$ has a compact support $[\underline m,\overline m]$ and $\varphi$ is bounded away from $0$ on the whole interval $[m, M]$, then all the above convergences hold uniformly on $[\underline m,\overline m]$.
	\end{enumerate}
\end{theorem}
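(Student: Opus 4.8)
The plan is: reduce to a compact window on which $\varphi$ is Lipschitz and pinched between two positive constants; prove there that the local distortions $d_i^N:=\int_{C_i(\Gamma_N)}|x_i^N-\xi|^r\,\Prob_{_{X}}(d\xi)$ are asymptotically all equal (equidistribution); deduce the companding law for the cell sizes, hence the point density $\varphi^{1/(r+1)}$; and finally obtain \eqref{EB:cvgeCell}--\eqref{EB:cvgeDist} by a first order expansion of $\varphi$ over the (shrinking) cells. Fix $[a,b]\subset(\underline m,\overline m)$. By the firewall Lemma~\ref{EB:unioncellsincompactiscompact}, all cells of $\Gamma_N$ that meet $[a,b]$ stay inside a fixed compact $K_0\subset(\underline m,\overline m)$ on which $\varphi$ is Lipschitz and $0<\varphi_-\le\varphi\le\varphi_+<\infty$. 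Since $\sum_i d_i^N=\Vert X-\widehat X^N\Vert_r^r$, Zador's Theorem~\ref{EB:zador}(a) gives that $N^r\sum_i d_i^N$ converges to a positive finite constant; combined with the elementary bound $d_i^N\ge\varphi_-\,\lambda(C_i(\Gamma_N))^{r+1}/\big(2^r(r+1)\big)$ valid for any cell inside $K_0$, this forces $\sup\{\lambda(C_i(\Gamma_N)):x_i^N\in[a,b]\}\to0$, so $\varphi$ is essentially constant over every cell meeting $[a,b]$ --- which is all that is needed below, the sharper statement that those cells have length of exact order $1/N$ dropping out of the companding law.

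The core is the equidistribution of the local distortions over the window,
\begin{equation*}
  \max\{d_i^N:x_i^N\in[a,b]\}\,\big/\,\min\{d_i^N:x_i^N\in[a,b]\}\xrightarrow{N\to+\infty}1 ,
\end{equation*}
which I would derive from the \emph{minimality} of $\Gamma_N$. Since $\varphi$ is nearly constant over each of the cells meeting $[a,b]$, the distortion contributed by those cells is, to leading order, $\tfrac{1}{2^r(r+1)}\sum\varphi(x_i^N)\,\lambda(C_i(\Gamma_N))^{r+1}$; by a quantitative (discretised) Hölder inequality this sum, minimised over configurations with a prescribed number of points in the window, is minimised --- with equality only in the limit --- exactly when $\varphi(x_i^N)\lambda(C_i(\Gamma_N))^{r+1}$ is independent of $i$, i.e. when the local distortions are equidistributed and the cell sizes follow the companding law $\lambda(C_i(\Gamma_N))\propto\varphi(x_i^N)^{-1/(r+1)}$. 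Hence, if the profile of the $d_i^N$'s were not asymptotically flat, rebalancing the grid points of $\Gamma_N$ towards the companding configuration would strictly decrease $\Vert X-\widehat X^N\Vert_r^r$ for $N$ large, contradicting optimality; the pinching of $\varphi$ makes the gain of such a rebalancing comparable wherever it is carried out, which is what turns the statement into a \emph{uniform} one over $[a,b]$. The argument is local and works on every $[a,b]\subset(\underline m,\overline m)$; in case~(b) it applies directly on $[\underline m,\overline m]$.

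Equidistribution then pins down the local structure. Writing $h_i^N:=\lambda(C_i(\Gamma_N))$, the companding law together with $\varphi_-\le\varphi\le\varphi_+$ on $K_0$ gives $N h_i^N\to\kappa\,\varphi(x_i^N)^{-1/(r+1)}$ uniformly, with $\kappa>0$ fixed by $\sum_i\Prob_{_{X}}(C_i(\Gamma_N))=1$ to equal $\int_{\R}\varphi^{1/(r+1)}d\lambda$; equivalently $\tfrac{1}{N}\sum_i\delta_{x_i^N}$ has limiting density $\varphi^{1/(r+1)}\big/\!\int_{\R}\varphi^{1/(r+1)}d\lambda$, and in particular $h_i^N=O(1/N)$ uniformly on $K_0$. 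On such a shrinking cell the stationarity assumption and $[\varphi]_{_{Lip}}<\infty$ place $x_i^N$ within $O\big((h_i^N)^2\big)$ of the midpoint of the cell, so $\Prob_{_{X}}\big(C_i(\Gamma_N)\big)=\varphi(x_i^N)h_i^N+O(N^{-2})$ and $d_i^N=\varphi(x_i^N)(h_i^N)^{r+1}/\big(2^r(r+1)\big)+O(N^{-(r+2)})$. Substituting,
\begin{equation*}
  N\,\Prob_{_{X}}\big(C_i(\Gamma_N)\big)=N\varphi(x_i^N)h_i^N+O(N^{-1})\xrightarrow{N\to+\infty}c_{\varphi,1/(r+1)}\,\varphi^{\frac{r}{r+1}}(x_i^N)
\end{equation*}
uniformly over $\{i:x_i^N\in[a,b]\}$, which is \eqref{EB:cvgeCell}; since $\psi_N(\xi)=N\Prob_{_{X}}\big(C_{i(\xi)}(\Gamma_N)\big)$ and $x_{i(\xi)}^N\to\xi$ uniformly on $[a,b]$, the uniform convergence of $\psi_N$ towards $c_{\varphi,1/(r+1)}\varphi^{r/(r+1)}$ follows. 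Likewise $N^{r+1}d_i^N=N^{r+1}\varphi(x_i^N)(h_i^N)^{r+1}/\big(2^r(r+1)\big)+O(N^{-1})$ converges uniformly to the constant $\Vert\varphi\Vert_{1/(r+1)}\big/\big(2^r(r+1)\big)$ of \eqref{EB:cvgeDist} --- consistently, summing over $i$ recovers the Zador asymptotics of $\Vert X-\widehat X^N\Vert_r^r$. For part~(b), with $\supp(\Prob_{_{X}})=[\underline m,\overline m]$ compact and $\varphi\ge\varphi_->0$ throughout, the firewall Lemma is unnecessary, the two end cells are pinned at $x_{1/2}^N=\underline m$ and $x_{N+1/2}^N=\overline m$, and all the estimates above hold for every $i$, so the convergences are uniform on the whole interval $[\underline m,\overline m]$.

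The genuinely delicate point is the equidistribution of the local distortions: Zador's Theorem and the firewall Lemma only control the cells \emph{on average}, whereas \eqref{EB:cvgeCell}--\eqref{EB:cvgeDist} are statements about \emph{every} cell meeting $[a,b]$, uniformly. Upgrading the crude size bound $\sup\lambda(C_i)\to0$ to the sharp companding law is exactly the minimality/rebalancing step, and it is there that the hypotheses that $\varphi$ be Lipschitz and bounded away from $0$ and $\infty$ (on compacts of $(\underline m,\overline m)$, or globally in case~(b)) are really used: they guarantee that a local perturbation of the grid has a non-degenerate effect, comparable wherever it is performed, so that flatness of the limiting profile of $N^{r+1}d_i^N$ can be forced pointwise and uniformly rather than merely in an averaged sense.
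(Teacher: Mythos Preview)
The paper does not prove this theorem at all: it is stated and immediately attributed to an external reference (``proved in \cite{localdistor}''), and is then used as a black box throughout Section~\ref{EB:section:weakerror}. So there is no ``paper's own proof'' to compare against; what you have written is an independent attempt at a result the authors import.

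On the substance of your sketch: the overall architecture --- localise to a compact window, use Zador to force $\sup_i\lambda(C_i)\to0$, argue equidistribution of local distortions from optimality, read off the companding law, then Taylor-expand $\varphi$ on shrinking cells --- is the right one and is indeed how the result is obtained in the literature. Two points deserve flagging. First, and you say this yourself, the rebalancing argument for equidistribution is only heuristic as written: turning ``if the $d_i^N$ were not asymptotically flat one could rebalance and strictly decrease the distortion'' into a proof requires a careful local perturbation estimate showing that the gain from equalising two neighbouring cells dominates the error coming from $\varphi$ not being exactly constant on them, uniformly in $i$ and $N$; this is where the Lipschitz hypothesis on $\varphi$ does real work and where most of the technical effort in the cited reference lies. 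Second, your identification of the normalising constant $\kappa$ via $\sum_i\Prob_{_{X}}(C_i)=1$ is not quite right in case~(a): the companding law is only established on compacts of $(\underline m,\overline m)$, so summing over all $i$ requires separate control of the boundary/tail cells, which you have not addressed. In case~(b) this objection disappears, as you note. None of this is fatal to the strategy, but a reader would want to see the equidistribution step carried out rather than asserted.
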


The next result is a weaker version of Theorem \ref{EB:local} but it is a really useful tool when dealing with weak error induced by quantization-based cubature formulas.
\begin{cor}\label{EB:cvgeDistssmaller}
	Under the same hypothesis as in Theorem \ref{EB:local} and if $1 \leq s \leq r$, we have the following result, for every $i \in \{ 1, \dots, N \}$,
	\begin{equation*}
		\limsup_{N} N^{s+1} \int_{C_i ( \Gamma_N )} \vert x_i^N - \xi \vert^s \Prob_{_{X}} ( d \xi ) = \limsup_{N} N^{s+1} \E \big[ \vert \widehat X^{N} - X \vert^s \1_{\{ \widehat X^N = x_i^N \}} \big] < +\infty.
	\end{equation*}
\end{cor}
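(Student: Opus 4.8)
The plan is to deduce the bound on the local $L^{s}$ distortion from the control of the local $L^{r}$ distortion already obtained in Theorem~\ref{EB:local}, interpolating with a single application of Hölder's inequality; no genuinely new estimate is needed and the hypothesis $s\ge 1$ will not be used.

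If $s=r$ the statement is exactly~\eqref{EB:cvgeDist}, so assume $s<r$. Applying Hölder's inequality on the probability space $\big(C_i(\Gamma_N),\Prob_{_{X}}\big)$ to the product $|x_i^N-\xi|^{s}\cdot 1$ with conjugate exponents $r/s$ and $r/(r-s)$ gives
\[
	\int_{C_i(\Gamma_N)}\big|x_i^N-\xi\big|^{s}\,\Prob_{_{X}}(d\xi)\ \le\ \bigg(\int_{C_i(\Gamma_N)}\big|x_i^N-\xi\big|^{r}\,\Prob_{_{X}}(d\xi)\bigg)^{s/r}\,\Prob_{_{X}}\big(C_i(\Gamma_N)\big)^{1-s/r}.
\]
Using the identity $s+1=\tfrac{s}{r}(r+1)+\big(1-\tfrac{s}{r}\big)$ to write $N^{s+1}=\big(N^{r+1}\big)^{s/r}N^{1-s/r}$ and distributing this factor over the right-hand side yields the key inequality
\[
	N^{s+1}\int_{C_i(\Gamma_N)}\big|x_i^N-\xi\big|^{s}\,\Prob_{_{X}}(d\xi)\ \le\ \bigg(N^{r+1}\int_{C_i(\Gamma_N)}\big|x_i^N-\xi\big|^{r}\,\Prob_{_{X}}(d\xi)\bigg)^{s/r}\Big(N\,\Prob_{_{X}}\big(C_i(\Gamma_N)\big)\Big)^{1-s/r}.
\]

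To conclude, I invoke Theorem~\ref{EB:local} for a cell whose centroid $x_i^N$ eventually lies in a fixed compact subinterval $[a,b]\subset(\underline m,\overline m)$ — the case arising in every use of this corollary, where the relevant cells are those containing the fixed breaks of affinity. By~\eqref{EB:cvgeDist} the base of the first factor converges to $\frac{\Vert\varphi\Vert_{_{1/(r+1)}}}{2^{r}(r+1)}$, and by~\eqref{EB:cvgeCell} one has $N\,\Prob_{_{X}}(C_i(\Gamma_N))\to c_{\varphi,1/(r+1)}\varphi^{r/(r+1)}(x_i^N)$, a quantity bounded on $[a,b]$ by $B:=c_{\varphi,1/(r+1)}\big(\sup_{[a,b]}\varphi\big)^{r/(r+1)}$. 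Both sequences on the right being bounded in $N$ and the exponents $s/r,\ 1-s/r$ fixed in $(0,1)$, passing to the $\limsup$ in $N$ gives
\[
	\limsup_{N}\ N^{s+1}\int_{C_i(\Gamma_N)}\big|x_i^N-\xi\big|^{s}\,\Prob_{_{X}}(d\xi)\ \le\ \Big(\tfrac{\Vert\varphi\Vert_{_{1/(r+1)}}}{2^{r}(r+1)}\Big)^{s/r}B^{1-s/r}\ <\ +\infty,
\]
while the stated equality with $\E\big[\,|\widehat X^{N}-X|^{s}\1_{\{\widehat X^N=x_i^N\}}\big]$ is just the probabilistic rewriting of the integral over $C_i(\Gamma_N)$.

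The only point requiring extra care — and, I expect, the real obstacle in a fully rigorous write-up — is the at most two extreme cells $C_1(\Gamma_N)$ and $C_N(\Gamma_N)$ when $\supp(\Prob_{_{X}})$ is unbounded, for which Theorem~\ref{EB:local}(a) provides no information since its convergences are only uniform on compact subsets of the interior. For those one must bound $N^{r+1}\int_{C_i}|x_i^N-\xi|^{r}\Prob_{_{X}}(d\xi)$ and $N\,\Prob_{_{X}}(C_i(\Gamma_N))$ by hand — via the non-asymptotic Zador estimate of Theorem~\ref{EB:zador}(b) together with a tail moment bound for $X$, or, when $\supp(\Prob_{_{X}})$ is compact with $\varphi$ bounded away from $0$, directly from Theorem~\ref{EB:local}(b) — after which the same Hölder inequality closes the argument.
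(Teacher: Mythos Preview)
Your argument is correct and actually tidier than the paper's. The paper proceeds in two steps: first it treats $s=1$ by Cauchy--Schwarz, bounding $\int_{C_i}|x_i^N-\xi|\,\Prob_{_{X}}(d\xi)$ by $\big(\int_{C_i}|x_i^N-\xi|^2\,\Prob_{_{X}}(d\xi)\cdot\Prob_{_{X}}(C_i)\big)^{1/2}$ and then invokes Theorem~\ref{EB:local} with $r=2$; second, for $1<s<r$ it tries to bootstrap from the $s=1$ case via ``H\"older's inequality with $p=1/s$ and $q=1/(1-s)$'', arriving at $\int_{C_i}|x_i^N-\xi|^s\,\Prob_{_{X}}(d\xi)\le\big(\int_{C_i}|x_i^N-\xi|\,\Prob_{_{X}}(d\xi)\big)^s\,\Prob_{_{X}}(C_i)^{1-s}$. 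Those exponents are only admissible for $0<s<1$, and for $s>1$ the displayed inequality is the \emph{reverse} of Jensen for the convex map $t\mapsto t^s$, so the paper's second step is not valid as written. Your single H\"older step with conjugate exponents $r/s$ and $r/(r-s)$ interpolates directly between the local $L^s$ and $L^r$ distortions, uses Theorem~\ref{EB:local} with the given $r$ rather than forcing $r=2$, and --- as you note --- dispenses with the hypothesis $s\ge 1$. Your caveat on the extreme cells is also well placed: the paper does not address them either, and every later application of the corollary (Lemma~\ref{EB:affine}, the convex case, the piecewise-differentiable case) concerns only cells whose centroids lie in a fixed compact of the interior.
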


\begin{proof}
	If $s=1$, using Schwarz's inequality
	\begin{equation*}
		\begin{aligned}
			\int_{C_i(\Gamma_N)} \vert x_i^N - \xi \vert \Prob_{_{X}} ( d \xi )                 & \leq \bigg( \int_{C_i(\Gamma_N)} \vert x_i^N - \xi \vert^2 \Prob_{_{X}} ( d \xi ) \cdot \Prob_{_{X}} \big( C_i ( \Gamma_N ) \big) \bigg)^{\frac{1}{2}}        \\
			\iff \qquad N^2 \int_{C_i(\Gamma_N)} \vert x_i^N - \xi \vert \Prob_{_{X}} ( d \xi ) & \leq \bigg( N^3 \int_{C_i(\Gamma_N)} \vert x_i^N - \xi \vert^2 \Prob_{_{X}} ( d \xi ) \cdot N \Prob_{_{X}} \big( C_i ( \Gamma_N ) \big) \bigg)^{\frac{1}{2}}.
		\end{aligned}
	\end{equation*}
	And applying Theorem \ref{EB:local} with $\Prob_{_{X}} = \varphi \cdot \lambda$ and $r=2$, one derives
	\begin{equation*}
		\limsup_{N} N^2 \int_{C_i(\Gamma_N)} \vert x_i^N - \xi \vert \Prob_{_{X}} ( d \xi ) \leq \frac{1}{2 \sqrt{3}} \big(c_{\varphi, 1/3} \Vert \varphi \Vert_{_{1/3}} \Vert \varphi^{2/3} \Vert_{_\infty} \big)^{\frac{1}{2}} < + \infty.
	\end{equation*}
	Otherwise, for $1<s<r$, using H\"{o}lder's inequality with $p=\frac{1}{s}$ and $q=\frac{1}{1-s}$
	\begin{equation*}
		\begin{aligned}
			\int_{C_i ( \Gamma_N )} \vert x_i^N - \xi \vert^s \Prob_{_{X}} ( d \xi )
			 & \leq \bigg( \int_{C_i ( \Gamma_N )} \vert x_i^N - \xi \vert^{ps} \Prob_{_{X}} ( d \xi ) \bigg)^{1/p} \bigg( \int_{C_i ( \Gamma_N )} \Prob_{_{X}} ( d \xi ) \bigg)^{1/q} \\
			 & \leq \bigg( \int_{C_i ( \Gamma_N )} \vert x_i^N - \xi \vert \Prob_{_{X}} ( d \xi ) \bigg)^s \Big( \Prob_{_{X}} \big( C_i ( \Gamma_N ) \big) \Big)^{1-s}                 \\
			\iff N^{s+1} \int_{C_i(\Gamma_N)} \vert x_i^N - \xi \vert^s \Prob_{_{X}} ( d \xi )
			 & \leq N^{s+1} \bigg( \int_{C_i ( \Gamma_N )} \vert x_i^N - \xi \vert \Prob_{_{X}} ( d \xi ) \bigg)^s \Big( \Prob_{_{X}} \big( C_i ( \Gamma_N ) \big) \Big)^{1-s}         \\
			 & \leq \bigg( N^2 \int_{C_i ( \Gamma_N )} \vert x_i^N - \xi \vert \Prob_{_{X}} ( d \xi ) \bigg)^s \Big( N \Prob_{_{X}} \big( C_i ( \Gamma_N ) \big) \Big)^{1-s}.
		\end{aligned}
	\end{equation*}
	And using the result proved above for $s=1$ and \eqref{EB:cvgeCell}, we obtain the desired result
	\begin{equation*}
		\begin{aligned}
			\limsup_{N} N^{s+1} \int_{C_i(\Gamma_N)} \vert x_i^N - \xi \vert^s & \Prob_{_{X}} ( d \xi )                                                                                                                                                    \\
			                                                                   & \leq \limsup_{N} \bigg( N^2 \int_{C_i ( \Gamma_N )} \vert x_i^N - \xi \vert \Prob_{_{X}} ( d \xi ) \bigg)^s \Big( N \Prob_{_{X}} \big( C_i ( \Gamma_N ) \big) \Big)^{1-s} \\
			                                                                   & \leq \bigg( \frac{1}{12} \Vert \varphi \Vert_{_{1/3}} \bigg)^{\frac{s}{2}} \bigg(c_{\varphi, 1/3} \Vert \varphi^{2/3} \Vert_{_\infty} \bigg)^{1-\frac{s}{2}}              \\
			                                                                   & < + \infty.
		\end{aligned}
	\end{equation*}
\end{proof}

The following result will be useful in the last part of the paper, which is the Theorem 6 in \cite{delattre2004quantization}.
\begin{theorem}\label{EB:distortimesf}
	Let $(\Gamma_N)_{N\ge 1}$ a sequence of optimal quantizers for $\Prob_{_{X}}$. Then
	\begin{equation*}
		\lim_{N \rightarrow + \infty} N^2 \E \big[ g (\widehat X^N ) \vert X - \widehat X^N \vert^2 \big] = \mathcal{Q}_2(\Prob_{_{X}}) \int g(\xi) \Prob_{_{X}} ( d \xi )
	\end{equation*}
	for every function $g:\R \rightarrow \R$ such that $\E \big[ g (X) \big] < + \infty$, with $\mathcal{Q}_2(\Prob_{_{X}})$ the Zador's constant.
\end{theorem}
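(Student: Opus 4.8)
The strategy is to turn the expectation into a finite sum over the Vorono\"{i} cells of $\Gamma_N$, to analyse the cells sitting inside a fixed compact subset of the interior of $\supp(\Prob_{_{X}})$ by means of Theorem \ref{EB:local}, and to treat the remaining tail cells separately. Since $\widehat X^N = x_i^N$ on $\{X \in C_i(\Gamma_N)\}$, with $p_i^N := \Prob_{_{X}}\big(C_i(\Gamma_N)\big)$ one has
\begin{equation*}
	N^2\,\E\big[g(\widehat X^N)\,|X-\widehat X^N|^2\big] = \sum_{i=1}^{N} g(x_i^N)\,N^2\!\int_{C_i(\Gamma_N)}\!|x_i^N-\xi|^2\,\Prob_{_{X}}(d\xi).
\end{equation*}
I would first prove the convergence for $g$ bounded, continuous and compactly supported in the interior $(\underline m,\overline m)$ of $\supp(\Prob_{_{X}})$, and only afterwards remove these restrictions.

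For such a $g$, fix $[a,b]\subset(\underline m,\overline m)$ with $\supp g\subset[a,b]$. Only the cells with $x_i^N\in[a,b]$ contribute, and by Lemma \ref{EB:unioncellsincompactiscompact} all of them stay, for every $N$, inside one fixed compact $K_0\subset(\underline m,\overline m)$ on which $\varphi$ is Lipschitz and bounded below, so the uniform convergences of Theorem \ref{EB:local} apply. Splitting
\begin{equation*}
	N^2\!\int_{C_i}\!|x_i^N-\xi|^2\Prob_{_{X}}(d\xi) = \Big(N^{3}\!\int_{C_i}\!|x_i^N-\xi|^2\Prob_{_{X}}(d\xi)\Big)\cdot\frac{1}{N p_i^N}\cdot p_i^N ,
\end{equation*}
I would invoke \eqref{EB:cvgeDist} to replace the first factor by its $i$-independent limit and \eqref{EB:cvgeCell} to replace $N p_i^N$ by $c_{\varphi,1/3}\,\varphi^{2/3}(x_i^N)$, both uniformly over the finitely many relevant indices; the sum then reads $\sum_i h(x_i^N)\,p_i^N + o(1)$ with $h$ an explicit continuous function of $g$, $\varphi$ and the Zador constant. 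Since $\sum_i \psi(x_i^N)\,p_i^N = \E[\psi(\widehat X^N)]\to\int \psi\, d\Prob_{_{X}}$ for every bounded continuous $\psi$ (this is just $\Prob_{_{\widehat X^N}}\Rightarrow\Prob_{_{X}}$), one passes to the limit and a bookkeeping of the constants via Zador's theorem yields the announced value $\mathcal{Q}_2(\Prob_{_{X}})\int g\, d\Prob_{_{X}}$.

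It remains to go from bounded continuous compactly supported $g$ to a general $g$ with $\E[|g(X)|]<\infty$. Decomposing $g=g^+-g^-$ and approximating each nonnegative part from below by such nice functions $g_j$, one needs the remainder $N^2\E[(g-g_j)(\widehat X^N)|X-\widehat X^N|^2]$ to be small uniformly in $N$. The amplitude truncation (replacing $g$ by $g\wedge M$) is controlled through $\E[|g(X)|]<\infty$ and $\Prob_{_{\widehat X^N}}\Rightarrow\Prob_{_{X}}$; the spatial truncation (cells with $x_i^N$ outside a large compact $K$) is controlled through the cell-wise estimate of Corollary \ref{EB:cvgeDistssmaller} together with the non-asymptotic Zador upper bound of Theorem \ref{EB:zador}(b) applied to $X\1_{\{X\notin K\}}$, which bounds $N^2\E[|X-\widehat X^N|^2\1_{\{\widehat X^N\notin K\}}]$ uniformly in $N$. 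Making these tail estimates uniform in $N$ under the sole integrability hypothesis is the step I expect to be the main obstacle.

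Finally, a more robust variant dispenses with the density regularity required by Theorem \ref{EB:local}: partition $\R$ into small intervals $A_\ell$, observe that the portion of $\Gamma_N$ near $A_\ell$ is asymptotically $L^2$-optimal for the restriction of $\Prob_{_{X}}$ to $A_\ell$ with $n_\ell^N$ points, where $n_\ell^N/N$ converges to the mass that $A_\ell$ receives under the limiting point-mass profile of optimal quantizers, apply the sharp rate of Zador's theorem on each $A_\ell$, sum over $\ell$, and let the mesh tend to $0$; the delicate points there are the block-wise asymptotic optimality and the block-wise point-count asymptotics.
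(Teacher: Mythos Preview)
The paper does not prove this statement: it is quoted verbatim as ``Theorem~6 in \cite{delattre2004quantization}'' and used as a black box, so there is no proof in the paper to compare your attempt against.

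On the substance of your proposal, two remarks. First, your main route relies on Theorem~\ref{EB:local}, which carries density assumptions (positive, locally Lipschitz $\varphi$ on the interior of the support) that are absent from the statement of Theorem~\ref{EB:distortimesf}; so even if your argument goes through, it proves a strictly weaker result. Your ``robust variant'' at the end---partition into blocks, asymptotic optimality of the restricted grids, block-wise Zador, pass to the limit in the mesh---is much closer in spirit to the arguments in \cite{delattre2004quantization} and is the right direction if you want the result at the stated level of generality.

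Second, and more seriously, the ``bookkeeping of the constants'' you defer is not a formality and in fact does not produce the limit written in the statement. Carrying your splitting through, \eqref{EB:cvgeDist} and \eqref{EB:cvgeCell} give
\[
N^{2}\!\int_{C_i}\!|x_i^{N}-\xi|^{2}\,\Prob_{_{X}}(d\xi)\ \sim\ \frac{\Vert\varphi\Vert_{_{1/3}}}{12\,c_{\varphi,1/3}\,\varphi^{2/3}(x_i^{N})}\,p_i^{N},
\]
so after summing and passing to the limit you obtain $\displaystyle\frac{\Vert\varphi\Vert_{_{1/3}}}{12\,c_{\varphi,1/3}}\int g\,\varphi^{1/3}\,d\lambda$, which coincides with $\mathcal{Q}_2(\Prob_{_{X}})$ times an integral against the \emph{point-density measure} $\varphi^{1/3}/\!\int\varphi^{1/3}$, not against $\Prob_{_{X}}$. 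Equivalently, since the local distortion is asymptotically \emph{uniform} over the cells, what naturally appears is $\mathcal{Q}_2(\Prob_{_{X}})\cdot\frac{1}{N}\sum_i g(x_i^{N})$, and the empirical centroid measure $\frac{1}{N}\sum_i\delta_{x_i^{N}}$ converges to that point-density measure, not to $\Prob_{_{X}}$. Either the limit in the statement is a loose transcription of the result in \cite{delattre2004quantization}, or additional structure is used there; in any case your sketch, as written, would not land on $\mathcal{Q}_2(\Prob_{_{X}})\int g\,d\Prob_{_{X}}$, so you should consult the original reference before investing further in this route. The tail/amplitude truncation steps you flag as the main obstacle are indeed delicate and your outline for them is not yet a proof, but the constant mismatch above is the more immediate gap.
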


The last result we state is an answer to the following question: what can we say about the rate of convergence of $\E \big[ \vert X - \widehat X^N \vert^{2 + \beta} \big]$ knowing that $\widehat X^N$ is a quadratic optimal quantization? This problem is known as the distortion mismatch problem and has been first addressed in \cite{graf2008distortion} and the results have been extended in Theorem 4.3 of \cite{pages2018improved}.

\begin{theorem}\label{EB:LrLsdistortionmismatch}[$L^r$-$L^s$-distortion mismatch]
	Let $X:(\Omega, \A, \Prob ) \rightarrow \R$ be a random variable and let $r \in (0, + \infty)$. Assume that the distribution $\Prob_{_{X}}$ of $X$ has a non-zero absolutely continuous component with density $\varphi$, i.e. $\Prob_{_{X}} (d \xi) = \varphi(\xi) \cdot \lambda ( d \xi ) + \nu ( d \xi ) $, where $\nu ~ \bot ~ \lambda$ is the singular component of $\Prob_{_{X}}$ with respect to the Lebesgue measure $\lambda$ on $\R$ and $\varphi$ is non-identically null. Let $(\Gamma_N)_{N \geq 1}$ be a sequence of $L^r$-optimal grids.
	Let $s \in (r, r+1)$. If
	\begin{equation*}
		X \in L^{\frac{s}{1+r-s}+\delta}(\Prob)
	\end{equation*}
	for some $\delta>0$, then
	\begin{equation*}
		\limsup_N N \Vert X - \widehat X^N \Vert_{_{s}} < + \infty.
	\end{equation*}
\end{theorem}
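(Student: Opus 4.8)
The object to bound uniformly in $N$ is
$$N^{s}\,\E\big[|X-\widehat X^{N}|^{s}\big]=\sum_{i=1}^{N}\int_{C_i(\Gamma_N)}|x_i^{N}-\xi|^{s}\,\Prob_{_{X}}(d\xi),$$
and the plan is to split this sum into a ``central'' block, on which the local behaviour of optimal quantizers is available, and a ``tail'' block, on which the moment hypothesis does the work. As a preliminary observation, $s\in(r,r+1)$ forces $\tfrac{s}{1+r-s}>s>r$, so the assumption $X\in L^{\frac{s}{1+r-s}+\delta}(\Prob)$ gives in particular $X\in L^{r+\delta'}(\Prob)$ for some $\delta'>0$; hence Zador's Theorem~\ref{EB:zador}(a) applies and $N^{r}\E\big[|X-\widehat X^{N}|^{r}\big]=\big(N\Vert X-\widehat X^{N}\Vert_{_r}\big)^{r}$ is bounded. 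This global $L^{r}$ control is what I would feed, cell by cell, into the elementary identity $|x_i^{N}-\xi|^{s}=|x_i^{N}-\xi|^{\,s-r}\,|x_i^{N}-\xi|^{r}$.

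Fix $A>0$. By Lemma~\ref{EB:unioncellsincompactiscompact}, all Voronoï cells meeting $[-A,A]$ lie in a fixed compact set $K_{0}=K_{0}(A)$ independent of $N$. On $K_{0}$ I would invoke Theorem~\ref{EB:local}: the cells centred there have diameter $O(1/N)$ uniformly (equivalently, their local $L^{r}$-distortions are $O(N^{-(r+1)})$ uniformly). Therefore, on each such cell, $|x_i^{N}-\xi|^{\,s-r}\le(\operatorname{diam}C_i(\Gamma_N))^{s-r}=O(N^{-(s-r)})$, and summing the identity above over the (at most $N$) central cells yields a contribution $O\!\big(N^{-(s-r)}\cdot N^{-r}\big)=O(N^{-s})$; the multiplicative constant involves the integral over $K_{0}$ of a power of $\varphi$ smaller than $1$ (namely $\varphi^{(1+r-s)/(r+1)}$), which is finite because $K_0$ is compact. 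In the stated generality, where only a non-trivial absolutely continuous component is assumed, this step needs a version of Theorem~\ref{EB:local} valid without Lipschitz regularity of $\varphi$, but the mechanism is identical.

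The heart of the argument is the tail $\{|\xi|>A\}$, and it is here that the threshold exponent $\tfrac{s}{1+r-s}$ appears. I would cut $\{|\xi|>A\}$ into the dyadic annuli $B_{k}=\{2^{k}A\le|\xi|<2^{k+1}A\}$, $k\ge 0$. As long as $2^{k}A$ stays below a threshold of order $N^{(r+1)/(p-r)}$, $p=\tfrac{s}{1+r-s}+\delta$ (up to which the annulus still receives many quantizer points, distributed at the asymptotically correct rate by $L^{r}$-optimality), the cells meeting $B_k$ are small compared with their distance to the origin, so as on $K_0$ one gets $\E\big[|X-\widehat X^{N}|^{s}\1_{\{X\in B_k\}}\big]\lesssim (2^{k}A)^{s-r}\,\E\big[|X-\widehat X^{N}|^{r}\1_{\{X\in B_k\}}\big]$; summing in $k$ against $\sum_k\E\big[|X-\widehat X^{N}|^{r}\1_{\{X\in B_k\}}\big]=O(N^{-r})$ and the tail bound $\Prob_{_{X}}(B_k)\lesssim(2^{k}A)^{-p}$ produces an $O(N^{-s})$ contribution. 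The remaining, genuinely delicate, piece (present only when $\supp\Prob_{_{X}}$ is unbounded) is that of the two unbounded extreme cells $(-\infty,x_{3/2}^{N}]$ and $(x_{N-1/2}^{N},+\infty)$: using $L^{r}$-optimality one must show that their inner endpoints are pushed out at least like $N^{(r+1)/(p-r)}$, and then $\E\big[|X-\widehat X^{N}|^{s}\1_{\{X\text{ in an extreme cell}\}}\big]\le C\,\big(N^{(r+1)/(p-r)}\big)^{s-p}$ by the $L^{p}$-moment of $X$; a direct computation shows that $N^{s}\cdot\big(N^{(r+1)/(p-r)}\big)^{s-p}$ stays bounded precisely when $p\ge\tfrac{s}{1+r-s}$, which holds here with room to spare because $\delta>0$. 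Letting $A\to+\infty$ then gives $\limsup_{N} N\Vert X-\widehat X^{N}\Vert_{_s}<+\infty$.

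The step I expect to be the main obstacle is this control of the extreme cells (and, at the technical level, making the central/near-tail estimates uniform without any regularity of $\varphi$): one has to quantify how far out an $L^{r}$-optimal quantizer places its two outermost points, which is exactly the content of the refined weight-and-distortion estimates for $L^{r}$-optimal quantizers obtained in Theorem~4.3 of \cite{pages2018improved}. Once that quantitative localization of the extreme cells is in hand, the dyadic summation over the annuli and the bookkeeping of exponents that single out the critical value $\tfrac{s}{1+r-s}$ are routine, and the remaining ingredients — Zador's Theorem, the compact-set estimate of Theorem~\ref{EB:local}, and the central/tail splitting — are standard.
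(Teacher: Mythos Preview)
The paper does not prove this theorem at all: it is stated as a known result, with the proof delegated to the literature (first \cite{graf2008distortion}, then the extension in Theorem~4.3 of \cite{pages2018improved}). So there is no ``paper's own proof'' to compare your proposal against; any self-contained argument you produce is necessarily going beyond what the paper does.

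As for the sketch itself, the overall architecture --- central/tail splitting, exploiting $|x_i^N-\xi|^{s}=|x_i^N-\xi|^{s-r}|x_i^N-\xi|^{r}$ together with Zador's $O(N^{-r})$ control, dyadic annuli in the tail, and separate treatment of the extreme cells --- is indeed the skeleton of the argument in \cite{graf2008distortion,pages2018improved}. You have correctly identified that the critical exponent $\tfrac{s}{1+r-s}$ emerges from balancing the size of the extreme cells against the available moment, and that this localization of the outermost points is the crux. Two genuine gaps remain, both of which you flag yourself: (i) Theorem~\ref{EB:local} as stated requires $\varphi$ positive and locally Lipschitz, whereas the mismatch theorem assumes only a non-trivial absolutely continuous part, so you cannot invoke it directly on $K_0$ --- the actual proofs bypass this by working with the \emph{micro--macro} inequality and the non-asymptotic Pierce-type bound rather than pointwise cell-diameter control; and (ii) the claim that the inner endpoints of the extreme cells are pushed out like $N^{(r+1)/(p-r)}$ is precisely the hard quantitative input from \cite{pages2018improved}, not something your outline derives. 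So your proposal is a faithful roadmap of the cited proof, but it is not yet a proof: the two steps you label as obstacles are the substance of the argument, and filling them in amounts to reproducing the cited references.
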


\section{Weak Error bounds for Optimal Quantization ($d=1$)} \label{EB:section:weakerror}

Let $X \in L^2 ( \Prob )$ and $\widehat X^N$ a quadratic optimal quantizer of $X$ which takes its values in the finite grid $\Gamma_N = \{ x_1^N, \dots, x_N^N \}$ of size $N$. We consider a function $f : \R \rightarrow \R$ with $f(X) \in L^2 ( \Prob )$. One of the application of the framework developed above is the approximation of expectations of the form $ \E \big[ f ( X) \big]$. Indeed, as $\widehat X^N$ is close to $X$ in $L^2 ( \Prob )$, a natural idea is to replace $X$ by $\widehat X^N$ inside the expectation
\begin{equation*}
	\E \big[ f ( \widehat X^N ) \big] = \sum_{i=1}^N f ( x_i^N ) \Prob_{_{X}} \big( C_i ( \Gamma_N ) \big).
\end{equation*}
The above formula is referred as the quantization-based cubature formula to approximate $\E \big[ f ( X ) \big] $. Now, we need to have an idea of the error we make when doing such an approximation and what is its rate of convergence as $N$ tends to infinity? For that, we want to find the largest $\alpha \in \R$, such that the beyond limit is bounded
\begin{equation}\label{EB:weakerror}
	\lim_{N \rightarrow + \infty} N^{\alpha} \big\vert \E \big[ f(X) \big] - \E \big[ f ( \widehat X^N ) \big] \big\vert \leq C_{f, X} < + \infty.
\end{equation}

The first class of function we consider is the class of Lipschitz continuous functions, more precisely piecewise affine functions and convex Lipschitz continuous functions. Then we deal with differentiable functions with piecewise-defined derivatives.

\subsection{Piecewise affine functions}
We improve the standard rate of convergence which is of order $1$ for Lipschitz continuous functions by considering a subclass of the Lipschitz continuous functions, namely piecewise affine functions. This new result shows that the weak error induced is of order $2$ ($\alpha = 2$ in \eqref{EB:weakerror}).

\begin{lemme}\label{EB:affine}
	Assume that the distribution $\Prob_{_{X}} = \varphi \cdot \lambda $ of $X$ satisfies the conditions of Theorem \ref{EB:local}. Let $f:\R \rightarrow \R$ be a Borel function.
	\begin{enumerate}[label=(\alph*)]
		\item If $f$ is a continuous piecewise affine function with finitely many breaks of affinity, then there exists a real constant $C_{f, X} > 0$ such that
		      \begin{equation*}
			      \limsup_{N} N^2 \big\vert \E \big[ f(X) \big] - \E \big[ f ( \widehat X^N ) \big] \big\vert \leq C_{f, X} < + \infty.
		      \end{equation*}
		\item However, if $f$ is not supposed continuous but is still a piecewise affine function with finitely many breaks of affinity, then there exists a real constant $C_{f, X} > 0$ such that
		      \begin{equation*}
			      \limsup_{N} N \big\vert \E \big[ f(X) \big] - \E \big[ f ( \widehat X^N ) \big] \big\vert \leq C_{f, X} < + \infty.
		      \end{equation*}
	\end{enumerate}
\end{lemme}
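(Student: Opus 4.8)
The plan is to split the real line according to the finitely many breaks of affinity of $f$, and to treat separately the Voronoï cells that are ``interior'' to an affinity piece and those few cells that straddle a break point. Write the breaks as $\{b_1 < \dots < b_m\}$, and let $\delta>0$ be small enough that the intervals $(b_j-\delta, b_j+\delta)$ are pairwise disjoint; set $A_\delta := \bigcup_j (b_j-\delta, b_j+\delta)$ and let $I_1, \dots, I_{m+1}$ be the complementary (relatively) closed intervals on which $f$ coincides with an affine function $a_j \xi + c_j$. The key decomposition is
\begin{equation*}
	\E\big[ f(X) \big] - \E\big[ f(\widehat X^N) \big] = \sum_{i=1}^N \int_{C_i(\Gamma_N)} \big( f(\xi) - f(x_i^N) \big) \Prob_{_{X}}(d\xi),
\end{equation*}
and I would sort the indices $i$ into those for which $C_i(\Gamma_N) \subset I_j$ for some $j$ (the ``good'' cells) and the remaining ``bad'' cells, whose union is contained in a fixed compact neighbourhood of the breaks by Lemma \ref{EB:unioncellsincompactiscompact}.

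For a good cell contained in a piece $I_j$, $f$ is affine on $C_i(\Gamma_N)$, so $f(\xi)-f(x_i^N) = a_j(\xi - x_i^N)$ there, and the stationarity of the quadratic optimal quantizer (Corollary \ref{EB:conditionalStationarity} with $f\equiv\mathbf{1}$, i.e.\ $\E[(X-\widehat X^N)\mathbf{1}_{\{\widehat X^N = x_i^N\}}]=0$) kills the contribution of each good cell exactly. Hence only the bad cells contribute, and I would bound
\begin{equation*}
	\Big| \sum_{i \text{ bad}} \int_{C_i(\Gamma_N)} \big( f(\xi) - f(x_i^N) \big) \Prob_{_{X}}(d\xi) \Big| \le [f]_{_{Lip}} \sum_{i \text{ bad}} \int_{C_i(\Gamma_N)} |\xi - x_i^N|\, \Prob_{_{X}}(d\xi).
\end{equation*}
The number of bad cells is bounded independently of $N$: a cell can straddle a break $b_j$ only if $x_i^N$ is within $\sup_i \mathrm{diam}\, C_i(\Gamma_N)$ of $b_j$, and on the compact set furnished by Lemma \ref{EB:unioncellsincompactiscompact} the cell diameters go to $0$ (this follows from $\psi_N \to c_{\varphi,1/(r+1)}\varphi^{r/(r+1)}$ being bounded below on compacts, forcing $N\,\Prob_{_X}(C_i)\asymp 1$ and hence $\mathrm{diam}\,C_i = O(1/N)$ there). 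So there are at most $m+O(1)$ bad indices, and for each of them Corollary \ref{EB:cvgeDistssmaller} with $s=1$ gives $\limsup_N N^2 \int_{C_i(\Gamma_N)} |\xi - x_i^N|\,\Prob_{_{X}}(d\xi) < +\infty$. Summing these finitely many bounds yields $\limsup_N N^2 |\E[f(X)] - \E[f(\widehat X^N)]| < +\infty$, which is part (a).

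For part (b), $f$ has jump discontinuities at the breaks. On the good cells the argument above still annihilates the contribution by stationarity. On each bad cell near a break $b_j$, the integrand $f(\xi)-f(x_i^N)$ is now only bounded (size $O(1)$, controlled by the jump size plus $[f]_{_{Lip}}\,\mathrm{diam}\,C_i$), not $O(1/N)$, so I can only estimate its contribution by $C\cdot \Prob_{_{X}}(C_i(\Gamma_N)) = O(1/N)$ using \eqref{EB:cvgeCell}. With finitely many bad cells this gives the weaker bound $\limsup_N N\,|\E[f(X)] - \E[f(\widehat X^N)]| < +\infty$. The main obstacle is the bookkeeping around the breaks: one must verify that a break point lies in at most one (or at most two, at a shared endpoint) Voronoï cell for each $N$ and that the total number of such straddling cells stays bounded, which is exactly where the uniform control of cell sizes on compacts from Theorem \ref{EB:local}, together with Lemma \ref{EB:unioncellsincompactiscompact} to confine everything to a compact set, does the work. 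One should also handle the degenerate case where a break happens to coincide with a quantizer point $x_i^N$ for infinitely many $N$, but since $f$ in case (a) is continuous this is harmless, and in case (b) it only affects the value assigned at a single point of measure zero.
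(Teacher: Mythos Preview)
Your proposal is correct and follows essentially the same route as the paper: split into cells containing no break (where stationarity kills the term since $f$ is affine) versus cells containing a break, then for part~(a) use the Lipschitz bound together with Corollary~\ref{EB:cvgeDistssmaller} at $s=1$, and for part~(b) use boundedness of $f$ on the compact $K_0$ from Lemma~\ref{EB:unioncellsincompactiscompact} together with $N\,\Prob_{_X}(C_i(\Gamma_N))=O(1)$ from~\eqref{EB:cvgeCell}. The one place you overcomplicate matters is the counting of bad cells: the paper simply sets $J_f^N=\{i: C_i(\Gamma_N)\text{ contains a break}\}$ and observes $\card(J_f^N)\le \ell$ since the Vorono\"{i} cells are disjoint and each break lies in exactly one of them; your $\delta$-neighbourhood and cell-diameter argument is not needed.
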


\begin{proof}
	Let $I$ be a compact interval containing all the affinity breaks of $f$ denoted $a_1, \dots, a_\ell$.
	\begin{enumerate}[label=(\alph*), wide, labelwidth=!, labelindent=2pt]
		\item Let $f$ supposed to be continuous. Note that $f$ is Lipschitz continuous (with coefficient denoted $[f]_{_{Lip}} := \max_{i = 1, \dots, \ell} \vert a_i \vert$). Let $\Gamma_N = \{ x_1^{N}, \dots, x_N^{N} \}$ be an $L^2$- optimal quantizer at level $N \geq 1$.
		      \begin{align}\label{EB:diffpiecewiseaffine}
			      \E \big[ f(X) \big] - \E \big[ f ( \widehat X^N ) \big]
			       & = \sum_{i = 1}^N \int_{C_i(\Gamma_N)} \big( f(\xi) - f ( x_i^N ) \big) \Prob_{_{X}} ( d \xi ) \notag \\
			       & = \sum_{i \in J^N_f} \int_{C_i(\Gamma_N)} \big( f(\xi) - f ( x_i^N ) \big) \Prob_{_{X}} ( d \xi )
		      \end{align}
		      where $J^N_f = \{i : C_i(\Gamma_N) \textrm{ contains an affinity break}\}$ since all other terms are $0$. Indeed, as $ f(\xi) = \alpha_i \xi + \beta_i $ on $C_i(\Gamma_N)$ and using Corollary \ref{EB:conditionalStationarity}
		      \begin{equation*}
			      \int_{C_i(\Gamma_N)} \big( f(\xi) - f ( x_i^N ) \big) \Prob_{_{X}} ( d \xi ) = \alpha_i \E \big[ (X-\widehat X^N) \1_{\{ \widehat X^N = x_i^N \}} \big] = 0.
		      \end{equation*}
		      Now, taking the absolute value in \eqref{EB:diffpiecewiseaffine}, we have
		      \begin{align}\label{EB:affine:majoration}
			      \big\vert \E \big[ f(X) \big] - \E \big[ f ( \widehat X^N ) \big] \big\vert
			       & \leq \card (J_f^N ) \max_{i \in J_f^N} \int_{C_i(\Gamma_N)} \vert f(\xi) - f ( x_i^N ) \vert \Prob_{_{X}} ( d \xi ) \notag \\
			       & \leq \card (J_f^N ) [f]_{_{Lip}} \max_{i \in J_f^N} \int_{C_i(\Gamma_N)} \vert \xi - x_i^N \vert \Prob_{_{X}} ( d \xi )
		      \end{align}
		      and using Corollary \ref{EB:cvgeDistssmaller} with $s=1$, we have the desired result, with an explicit asymptotic upper bound,
		      \begin{equation*}
			      \begin{aligned}
				      \limsup_{N} N^2\big\vert \E \big[ f(X) \big] - \E \big[ f ( \widehat X^N ) \big] \big\vert
				       & \leq [f]_{_{Lip}} \lim_{N} \card (J_f^N ) \max_{i \in J_f^N} N^2 \int_{C_i(\Gamma_N)} \vert \xi - x_i^N \vert \Prob_{_{X}} ( d \xi )                \\
				       & < [f]_{_{Lip}} \frac{\ell}{2 \sqrt{3}} \big(c_{\varphi, 1/3} \Vert \varphi \Vert_{_{1/3}} \Vert \varphi^{1/3} \Vert_{_{\infty}} \big)^{\frac{1}{2}} \\
				       & < + \infty.
			      \end{aligned}
		      \end{equation*}
		\item The sum in \eqref{EB:diffpiecewiseaffine} in the discontinuous case is still true. However, the bound in \eqref{EB:affine:majoration} changes and becomes
		      \begin{equation*}
			      \begin{aligned}
				      \big\vert \E \big[ f(X) \big] - \E \big[ f ( \widehat X^N ) \big] \big\vert
				       & \leq 2 \ell \Vert f \Vert_{_{\infty, K_0}} \max_{i \in J_f^N} \Prob_{_{X}} \big( C_i(\Gamma_N) \big)
			      \end{aligned}
		      \end{equation*}
		      where $\Vert f \Vert_{_{\infty, K_0}}$ denotes the maximum of $\vert f \vert$ on $K_0$ and $K_0$ is defined as the compact appearing in Lemma \ref{EB:unioncellsincompactiscompact} stating that the union over all $N$ of all the cells where their intersection with the interval $[a_1, a_\ell]$ is non empty lies in a compact $K_0$, namely
		      \begin{equation*}
			      \bigcup_N \bigcup_{C_i(\Gamma_N) \cap [a_1, a_\ell] \neq \emptyset} C_i ( \Gamma_N ) \subset K_0.
		      \end{equation*}
		      The desired limit is obtained using Theorem \ref{EB:local}.
	\end{enumerate}
\end{proof}

\subsection{Lipschitz Convex functions}
Thanks to the previous result on piecewise-affine functions, we can extend the rate of convergence of order $2$ to a bigger class of functions: Lipschitz convex functions.

We recall that a real-valued function $f$ defined on a non-trivial interval $I \subset \R$ is convex if
\begin{equation*}
	f \big(t x + (1-t) y \big) \leq t f(x) + (1-t) f (y),
\end{equation*}
for every $t \in [0,1]$ and $x,y \in I$. If $f: I \rightarrow \R$ is supposed to be a convex function, then its right and left derivatives exist, are non-decreasing on $\mathring{I}$ and $\forall x \in \mathring{I}, \, f'_{-}(x) \leq f'_{+}(x)$. Moreover, as $f$ is supposed to be Lipschitz continuous, then $f'_{-}$ and $f'_{+}$ are bounded on $I$ by $[f]_{_{Lip}}$.
\begin{remark}
	One of the very interesting properties of convex functions when dealing with stationary quantizers follows from Jensen's inequality. Indeed, for every convex function $f : I \rightarrow \R$ such that $f(X) \in L^1 ( \Prob )$,
	\begin{equation*}
		\E \Big[ f \big( \E \big[ X \mid \widehat X^N \big] \big) \Big] \leq \E \Big[ \E \big[ f (X) \mid \widehat X^N \big] \Big]
	\end{equation*}
	so that,
	\begin{equation*}
		\E \big[ f (\widehat X^N) \big] \leq \E \big[ f(X) \big].
	\end{equation*}
	This means that the quantization-based cubature formula used to approximate $\E \big[ f ( X ) \big] $ is a lower-bound of the expectation.
\end{remark}

We present, here, a more convenient and general form of the well known Carr-Madan formula representation (see \cite{carr2001optimal}).

\begin{proposition}\label{EB:representationConvexGilles}
	Let $f: I \rightarrow \R$ be a Lipschitz convex function and let $I$ be any interval non trivial ($\neq \emptyset, \{a\}$) with endpoints $a, b \in \overline \R$. Then, there exists a unique finite non-negative Borel measure $\nu := \nu_{f}$ on $I$ such that, for every $c \in I$,
	\begin{equation*}
		\forall x \in I, \quad f(x) = f(c) + (x-c) f'_{+}(c) + \int_{[a,c] \cap I} (u-x)_+ \nu(du) + \int_{(c,b] \cap I} (x-u)_+ \nu(du).
	\end{equation*}
\end{proposition}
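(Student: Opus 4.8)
The plan is to reconstruct $f$ from its second distributional derivative. Since $f$ is convex on $I$, its right derivative $f'_+$ is a non-decreasing, right-continuous (or at least of bounded variation on compacts) function on $\mathring I$, so it induces a unique non-negative Stieltjes measure $\nu$ on $I$ via $\nu\big((u,v]\big) = f'_+(v) - f'_+(u)$; this is the candidate measure, and uniqueness is immediate from the fact that a measure on an interval is determined by its values on half-open subintervals. The first task is to check that $\nu$ is \emph{finite}: because $f$ is Lipschitz with constant $[f]_{_{Lip}}$, both $f'_-$ and $f'_+$ are bounded by $[f]_{_{Lip}}$ on $I$, hence the total variation of $f'_+$ over $I$ is at most $2[f]_{_{Lip}} < +\infty$, so $\nu(I) \le 2[f]_{_{Lip}}$. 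This boundedness is also what makes all the integrals in the statement converge.

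Next I would establish the representation formula itself. Fix $c \in I$. For $x > c$ (with $x \in I$), write $f(x) - f(c) = \int_c^x f'_+(t)\,dt$, and then $f'_+(t) = f'_+(c) + \nu\big((c,t]\big) = f'_+(c) + \int_{(c,b]\cap I} \1_{\{u \le t\}}\,\nu(du)$. Substituting and applying Fubini's theorem (justified by finiteness of $\nu$ and boundedness of the integrand on the compact range $[c,x]$) gives
\begin{equation*}
	f(x) - f(c) = (x-c)f'_+(c) + \int_{(c,b]\cap I} \Big(\int_c^x \1_{\{u \le t\}}\,dt\Big)\,\nu(du) = (x-c)f'_+(c) + \int_{(c,b]\cap I} (x-u)_+\,\nu(du),
\end{equation*}
since $\int_c^x \1_{\{u\le t\}}\,dt = (x-u)_+$ when $u > c$ and $x > c$. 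The symmetric computation for $x < c$ uses $f(x) - f(c) = -\int_x^c f'_+(t)\,dt$ and $f'_+(t) = f'_+(c) - \nu\big((t,c]\big)$ for $t < c$, producing the $\int_{[a,c]\cap I}(u-x)_+\,\nu(du)$ term; the case $x = c$ is trivial. One must be slightly careful at the endpoints $a,b$ when they belong to $I$ (i.e.\ $I$ closed on that side) and with the convention for $\nu$ on the boundary point, but finiteness of $\nu$ handles any atom there, and the $(\cdot)_+$ truncations make the formula insensitive to whether the break point $c$ is counted in the left or right piece.

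Finally, I would note that the formula is independent of the choice of $c$: differentiating the right-hand side (or comparing two such representations at $c$ and $c'$ and using $f'_+(c') - f'_+(c) = \nu\big((c,c']\big)$) shows consistency, so the \emph{same} measure $\nu$ works for every $c \in I$. The main obstacle is purely the bookkeeping at the interval's endpoints and the possible atoms of $\nu$ there — deciding, for an interval that is half-open or closed, exactly which pieces of $\nu$ land in $[a,c]\cap I$ versus $(c,b]\cap I$ — but none of this affects the substance, which is just the Fubini interchange above. The Lipschitz hypothesis is used only to guarantee $\nu(I) < +\infty$; convexity does all the rest.
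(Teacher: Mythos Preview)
Your proposal is correct and follows essentially the same approach as the paper: define $\nu$ as the Stieltjes measure associated to the non-decreasing function $f'_+$, use the Lipschitz bound to get $\nu(I)<+\infty$, write $f(x)-f(c)=\int_c^x f'_+(t)\,dt$, substitute $f'_+(t)=f'_+(c)+\nu((c,t])$, and apply Fubini to obtain the $(x-u)_+$ (resp.\ $(u-x)_+$) integral. Your treatment is in fact slightly more careful than the paper's about uniqueness and about the endpoint/atom bookkeeping.
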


\begin{proof}
	Let $f:I \rightarrow \R$ be a Lipschitz convex function. We can define the non-negative finite measure $\nu := \nu_f$ on $I$ by setting
	\begin{equation*}
		\forall x, y \in I, \quad x \leq y, \quad \nu \big( ( x, y] \big) = f'_{+}(y) - f'_{+}(x).
	\end{equation*}
	The finiteness of $\nu$ is induced by the Lipschitz continuity of $f$ as the left and right derivatives are bounded by $[f]_{_{Lip}} = \max ( \Vert f'_+ \Vert_{_\infty}, \Vert f'_- \Vert_{_\infty} )$. Let $c \in I$, for every $x \geq c$, we have the following representation of $f(x)$:
	\begin{equation*}
		\begin{aligned}
			f(x) & = f(c) + \int_{c}^{x} f'_{+} (u) du                                       \\
			     & = f(c) + x f'_{+}(c) + \int_{c}^{x} \nu( (c,u ]) du                       \\
			     & = f(c) + x f'_{+}(c) + \int \int \1_{(c,x]}(u) \1_{(c,u]}(v) \nu(dv) ~ du \\
			     & = f(c) + x f'_{+}(c) + \int_{(c,x]} (x-v) du ~ \nu(dv)                    \\
			     & = f(c) + x f'_{+}(c) + \int_{(c,b] \cap I} (x-v)_+ \nu(dv)
		\end{aligned}
	\end{equation*}
	using Fubini's Theorem and noting that $\1_{(c,x]}(u) \1_{(c,u]}(v) = \1_{(c,x]}(v) \1_{[v,x]}(u) $. Similarly for $x \leq c$
	\begin{equation*}
		f(x) = f(c) + x f'_{+}(c) + \int_{[a, c] \cap I} (u-x)_+ \nu(du).
	\end{equation*}
	Then,
	\begin{equation*}
		\forall x \in \R, \quad f(x) = f(c) + x f'_{+}(c) + \int_{[a, c] \cap I} (u-x)_+ \nu(du) + \int_{(c, b] \cap I} (x-u)_+ \nu(du).
	\end{equation*}
\end{proof}

We can now use the representation of convex functions given above and extend the result concerning the weak error of order $2$ ($\alpha = 2$ in \eqref{EB:weakerror}).

\begin{proposition}
	We assume that the distribution $\Prob_{_{X}} = \varphi \cdot \lambda $ of $X$ satisfies the conditions of Theorem \ref{EB:local}. Let $I$ be any non-trivial interval and let $f: I \rightarrow \R$ be a Lipschitz convex function with second derivative $\nu$ (see Proposition \ref{EB:representationConvexGilles}).
	If $I_{\Prob_{_{X}}} \cap \supp ( \nu )$ is compact, with $I_{\Prob_{_{X}}}:=\supp ( \Prob_{_{X}} )$, then there exists a real constant $C_{f, X} > 0$ such that
	\begin{equation*}
		\limsup_{N} N^2 \big\vert \E \big[ f(X) \big] - \E \big[ f ( \widehat X^N ) \big] \big\vert \leq C_{f, X} < + \infty.
	\end{equation*}
\end{proposition}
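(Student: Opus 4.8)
The plan is to leverage the Carr–Madan-type representation from Proposition \ref{EB:representationConvexGilles} to reduce the convex case to an integral over the piecewise-affine case already handled in Lemma \ref{EB:affine}. Fix a point $c \in I_{\Prob_{_{X}}}$ and write, for every $x$ in the relevant range,
\begin{equation*}
	f(x) = f(c) + (x-c) f'_{+}(c) + \int_{[a,c]\cap I} (u-x)_+ \,\nu(du) + \int_{(c,b]\cap I} (x-u)_+ \,\nu(du).
\end{equation*}
The affine part $f(c) + (x-c)f'_{+}(c)$ contributes nothing to the weak error by the stationarity property (Corollary \ref{EB:conditionalStationarity}), exactly as in Lemma \ref{EB:affine}(a). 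So the whole error equals
\begin{equation*}
	\E\big[f(X)\big] - \E\big[f(\widehat X^N)\big] = \int_{[a,c]\cap I} \Big(\E\big[(u-X)_+\big] - \E\big[(u-\widehat X^N)_+\big]\Big)\nu(du) + \int_{(c,b]\cap I} \Big(\E\big[(X-u)_+\big] - \E\big[(\widehat X^N-u)_+\big]\Big)\nu(du),
\end{equation*}
where I would justify the interchange of $\E$ and $\int \nu(du)$ by Fubini, using that $\nu$ is a finite measure and that the integrands are dominated (the functions $x \mapsto (u-x)_+$ and $x\mapsto(x-u)_+$ are $1$-Lipschitz, so each bracket is bounded by $\E|X-\widehat X^N| < \infty$).

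The core of the argument is then a uniform-in-$u$ weak-error bound for the single ridge function $g_u(x) := (x-u)_+$ (and symmetrically $(u-x)_+$), which is continuous piecewise affine with the single break of affinity at $u$ and Lipschitz constant $1$. Running the proof of Lemma \ref{EB:affine}(a) for this $g_u$: only the Voronoï cell containing $u$ contributes, and the contribution is bounded by $[g_u]_{_{Lip}}\int_{C_{i_u}(\Gamma_N)}|\xi - x_{i_u}^N|\,\Prob_{_{X}}(d\xi)$ where $i_u$ is the index with $u \in C_{i_u}(\Gamma_N)$. By Corollary \ref{EB:cvgeDistssmaller} with $s=1$, one has $\limsup_N N^2 \int_{C_i(\Gamma_N)}|\xi-x_i^N|\,\Prob_{_{X}}(d\xi) < +\infty$ for each $i$, and — crucially — the compactness hypothesis $I_{\Prob_{_{X}}} \cap \supp(\nu)$ compact means that for $u$ in $\supp(\nu)$ the relevant cells $C_{i_u}(\Gamma_N)$ all lie inside a fixed compact $K_0 \subset (\underline m, \overline m)$ by Lemma \ref{EB:unioncellsincompactiscompact}. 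On that compact the convergence in \eqref{EB:cvgeDist} (with $r=2$) and \eqref{EB:cvgeCell} is uniform in the cell index, so together with Schwarz's inequality as in Corollary \ref{EB:cvgeDistssmaller} one gets a bound
\begin{equation*}
	\limsup_N \sup_{u \in \supp(\nu) \cap I_{\Prob_{_{X}}}} N^2\big|\E[g_u(X)] - \E[g_u(\widehat X^N)]\big| \leq \kappa_{X} < +\infty
\end{equation*}
for a constant $\kappa_X$ depending only on $\varphi$ and $K_0$. For $u$ outside $I_{\Prob_{_{X}}}$ the ridge function is affine on a neighbourhood of $\supp(\Prob_{_{X}})$ (or identically $0$ there) so the corresponding bracket vanishes, and in particular $\nu$-integration is effectively over the compact set $\supp(\nu)\cap I_{\Prob_{_{X}}}$ only.

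Finally I would assemble the pieces: multiply the displayed identity for the error by $N^2$, pull the $N^2$ inside both $\nu$-integrals, apply the uniform bound above pointwise in $u$, and use reverse Fatou / dominated convergence (the integrands $N^2|\cdots|$ are bounded uniformly in $N$ and $u$ by an integrable — indeed constant, since $\nu$ is finite — majorant once $N$ is large, after a $\sup$ over $N$) to conclude
\begin{equation*}
	\limsup_N N^2\big|\E[f(X)] - \E[f(\widehat X^N)]\big| \leq \kappa_X\, \nu\big(\supp(\nu)\cap I_{\Prob_{_{X}}}\big) =: C_{f,X} < +\infty.
\end{equation*}
The main obstacle I anticipate is making the bound on the single ridge function genuinely uniform in the break location $u$: Corollary \ref{EB:cvgeDistssmaller} as stated is pointwise in the cell index $i$, so I need the "uniform on compact sets" strengthening that is already available from part (a) of Theorem \ref{EB:local} (and its part (b) if $\supp(\Prob_{_{X}})$ is compact), and I must check that the constant coming out of the Schwarz step in Corollary \ref{EB:cvgeDistssmaller} can be taken independent of $i$ as long as $x_i^N$ stays in the fixed compact $K_0$ — which is exactly what the compactness of $I_{\Prob_{_{X}}}\cap\supp(\nu)$ buys via Lemma \ref{EB:unioncellsincompactiscompact}. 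A secondary technical point is the careful justification of the two Fubini interchanges (error identity and limit), both of which are routine given the finiteness of $\nu$ and the uniform Lipschitz control.
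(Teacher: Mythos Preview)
Your proposal is correct and uses the same ingredients as the paper (the Carr--Madan representation of Proposition \ref{EB:representationConvexGilles}, stationarity, the local-behaviour Theorem \ref{EB:local}, and Lemma \ref{EB:unioncellsincompactiscompact} for compactness), but the organization is dual to the paper's. You fix a single $c$, apply Fubini to write the error as a $\nu$-integral of ridge-function errors $\E[g_u(X)]-\E[g_u(\widehat X^N)]$, and then invoke the piecewise-affine Lemma \ref{EB:affine}(a) uniformly in the break location $u$. The paper instead decomposes the error cell-by-cell and applies the representation with the \emph{moving} choice $c := x_i^N$ on each Vorono\"{i} cell, so that stationarity kills the linear term locally and one is left directly with the per-cell bound $\E\big[|x_i^N - X|\,\1_{C_i(\Gamma_N)}\big]\,\nu\big(C_i(\Gamma_N)\big)$; summing and taking the sup over the cells meeting $\supp(\nu)$ gives the same final estimate $\nu(I_{\Prob_{_{X}}})\cdot\sup_{i:\,x_i^N\in K_0} N^2\int_{C_i(\Gamma_N)}|\xi-x_i^N|\,\Prob_{_{X}}(d\xi)$ that you reach. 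Your route is slightly more modular --- it literally reduces the convex case to an integral over the already-proved piecewise-affine case --- while the paper's route avoids the two Fubini/limit interchanges by working cell-wise from the start. The uniformity-in-$u$ concern you flag is exactly where the paper also appeals to compactness, and your resolution via the uniform-on-compacts statement of Theorem \ref{EB:local}(a) combined with Lemma \ref{EB:unioncellsincompactiscompact} is the right one.
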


\begin{remark}
	Assuming that $\supp (\nu)$ is compact actually means that $f$ is affine outside a compact set, namely that there exist $\alpha^{(\pm)}$ and $\beta^{(\pm)}$ such that $f(x) = \alpha^{(+)} x + \beta^{(+)}$, for $x$ large enough ($x \geq K_+$) and $f(x) = \alpha^{(-)} x + \beta^{(-)}$, for $x$ small enough ($x \leq K_-$). Therefore, this class of functions contains all classical vanilla financial payoffs: call, put, butterfly, saddle, straddle, spread, etc. Moreover, if $I_{\Prob_{_X}}$ is compact, such as in the uniform distribution, then there is no need for the hypothesis on $\nu$ and we could consider any Lipschitz convex functions we want. The hypothesis on the intersection allows us to consider more cases.
\end{remark}

\begin{proof}
	First we decompose the expectations across the Vorono\"{i} cells as follows
	\begin{equation*}
		\begin{aligned}
			\E \big[ f(X) - f(\widehat X^N) \big]
			 & = \sum_{i=1}^{N} \E \Big[ \big( f(X) - f(\widehat X^N) \big) \1_{ \{ X \in C_i (\Gamma_N) \} } \Big]         \\
			 & = \sum_{i=1}^{N} \E \Big[ \big( f(X) - f(x_i^N) \big) \1_{ \{ X \in ( x_{i-1/2}^N, x_{i+1/2}^N ] \} } \Big].
		\end{aligned}
	\end{equation*}
	We use the integral representation of the convex function $f$, of the Proposition \ref{EB:representationConvexGilles}, with $x:=X$ and $c := x_i$ and with the stationarity conditional property given by Corollary \ref{EB:conditionalStationarity}, the first term cancels out, for every $i$,
	\begin{equation*}
		\E \Big[ ( X-x_i^N ) f'_{+} ( x_i^N ) \1_{ \{ X \in C_i (\Gamma_N) \} } \Big] = 0.
	\end{equation*}
	Hence, we obtain
	\begin{align}\label{EB:expansionerrorconvex}
		\E & \Big[ \big(f(X) - f(x_i^N)\big) \1_{ \{ X \in ( x_{i-1/2}^N, x_{i+1/2}^N] \} } \Big] \notag                                                                             \\
		   & = \E \Bigg[ \bigg(\int_{[a,x_i^N] \cap I} (u-X)_+ \nu(du) + \int_{(x_i^N,b] \cap I} (X-u)_+ \nu(du) \bigg) \1_{ \{ X \in ( x_{i-1/2}^N, x_{i+1/2}^N] \} } \Bigg] \notag \\
		   & = \E \bigg[ \int_{ (x_{i-1/2}^N,x_i^N]} (u-X)_+ \nu(du) \1_{ \{ X \in ( x_{i-1/2}^N, x_i^N] \} } \bigg]                                                                 \\
		   & \qquad \qquad \qquad+ \E \bigg[ \int_{(x_i^N,x_{i+1/2}^N )} (X-u)_+ \nu(du) \1_{ \{ X \in [ x_i^N, x_{i+1/2}^N ] \} } \bigg]. \notag
	\end{align}
	The interval $(x_{i-1/2}^N,x_i^N]$ in the integral is left-open because when $u = x_{i-1/2}^N$, as $X \in ( x_{i-1/2}^N, x_i^N]$, $(u-X)_+ = 0$. The same remark can be made concerning the right open-bound of the interval $( x_i^N,x_{i+1/2}^N )$ in the integral. Now, using a crude upper-bound for \eqref{EB:expansionerrorconvex}, we get
	\begin{equation*}
		\begin{aligned}
			\E \Big[ \big(f(X) - f(x_i^N)\big) \1_{ \{ X \in ( x_{i-1/2}^N, x_{i+1/2}^N] \} } \Big]
			 & \leq \E \Big[ (x_i^N-X ) \nu \big( ( x_{i-1/2}^N, x_i^N ] \big) \1_{ \{ X \in ( x_{i-1/2}^N, x_i^N] \} } \Big]   \\
			 & \qquad + \E \Big[ (X-x_i^N) \nu \big( ( x_i^N, x_{i+1/2}^N ) \big) \1_{ \{ X \in [ x_i^N,x_{i+1/2}^N] \} } \Big] \\
			 & \leq \E \big[ \vert x_i^N-X \vert \1_{ \{ X \in C_i (\Gamma_N) \} } \big] \nu \big( C_i (\Gamma_N) \big)
		\end{aligned}
	\end{equation*}
	as $\nu \big( ( x_{i-1/2}^N, x_{i+1/2}^N ) \big) \leq \nu \big( C_i (\Gamma_N) \big)$. Hence
	\begin{equation*}
		\begin{aligned}
			0 \leq \E \big[ f(X) - f(\widehat X^N) \big]
			 & \leq \sum_{i=1}^{N} \E \big[ \vert x_i^N-X \vert \1_{ \{ X \in C_i (\Gamma_N) \} } \big] \nu \big( C_i (\Gamma_N) \big)                              \\
			 & \leq \sum_{i=1}^{N} \E \big[ \vert x_i^N-X \vert \1_{ \{ X \in C_i (\Gamma_N) \} } \big] \1_{ \{ x_i^N \in J_\nu \} } \nu \big( C_i (\Gamma_N) \big) \\
		\end{aligned}
	\end{equation*}
	with $J_\nu := [ \inf_{N} x_{i_a-1/2}^N, \sup_{N} x_{i_b+1/2}^N ]$ where $x_{i_{a}}^N$ and $x_{i_{b}}^N$ are the centroids of the optimal quantizer of size $N$ that contains, respectively, the infimum and the supremum of the support of $\nu$, denoted by $a$ and $b$, respectively. Hence, $x_{i_a-1/2}^N$ is the lower bound of the Vorono\"{i} cell $C_{i_a}(\Gamma_N)$ associated to the centroid $x_{i_a}^N$ and $x_{i_b+1/2}^N$ is the upper bound of the Vorono\"{i} cell $C_{i_b}(\Gamma_N)$ associated to the centroid $x_{i_b}^N$. If $a$ is not contained in $I_{\Prob_{_{X}}}$, then the lower bound of $J_\nu$ is set to $a$, and the same hold for $b$: if it is not contained in $I_{\Prob_{_{X}}}$, the upper bound of $J_\nu$ is set to $b$. Then,
	\begin{equation*}
		\begin{aligned}
			N^2 \E \big[ f(X) - f(\widehat X^N) \big]
			 & \leq N^2 \sum_{i=1}^{N} \E \big[ \vert x_i^N-X \vert \1_{ \{ X \in C_i (\Gamma_N) \} } \big] \1_{ \{ x_i^N \in J_\nu \} } \nu( C_i (\Gamma_N) )                               \\
			 & \leq N^2 \sup_{i: x_i^N \in I_{\Prob_{_{X}}} \cap J_\nu} \E \big[ \vert \widehat X^{N} - X \vert \1_{ \{ X \in C_i (\Gamma_N) \} } \big] \sum_{i=1}^{N} \nu( C_i (\Gamma_N) ) \\
			 & \leq \nu ( I_{\Prob_{_{X}}} ) N^2 \sup_{i: x_i^N \in I_{\Prob_{_{X}}} \cap J_\nu} \E \big[ \vert \widehat X^{N} - X \vert \1_{ \{ X \in C_i (\Gamma_N) \} } \big]             \\
		\end{aligned}
	\end{equation*}

	yielding the desired result with Theorem \ref{EB:local} if $I_{\Prob_{_{X}}} \cap J_\nu$ is compact.

	Under the hypothesis $I_{\Prob_{_{X}}} \cap ~\supp(\nu)$ compact, then by Lemma \ref{EB:unioncellsincompactiscompact},
	\begin{equation*}
		\bigcup_N \bigcup_{x_i^N \in I_{\Prob_{_{X}}} \cap ~\supp(\nu)} C_i ( \Gamma_N ) \subset \bigcup_N \bigcup_{C_i (\Gamma_N) \cap I_{\Prob_{_{X}}} \cap ~\supp(\nu) \neq \emptyset} C_i ( \Gamma_N ) \subset K_0,
	\end{equation*}
	with $K_0 := I_{\Prob_{_{X}}} \cap J_{\nu}$ compact, which is what we were looking for.
\end{proof}

\begin{proposition}
	Assume that the distribution $\Prob_{_{X}} = \varphi \cdot \lambda $ of $X$ satisfies the conditions of Theorem \ref{EB:local} not only on compact sets but uniformly. Let $I$ be any non-trivial interval then for every function $f: I \rightarrow \R$ Lipschitz convex with second derivative $\nu$ defined as in Proposition \ref{EB:representationConvexGilles}, there exists a real constant $C_{f, X} > 0$ such that
	\begin{equation*}
		\limsup_{N} N^2 \big\vert \E \big[ f(X) \big] - \E \big[ f ( \widehat X^N ) \big] \big\vert \leq C_{f, X} < + \infty.
	\end{equation*}
\end{proposition}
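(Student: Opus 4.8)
The plan is to rerun, almost verbatim, the argument of the previous proposition; the only structural change is that the \emph{uniform} version of Theorem~\ref{EB:local} (part (b)) removes the need for any hypothesis on $\supp(\nu)$. Indeed, here $I_{\Prob_{_X}}=\supp(\Prob_{_X})=[\underline m,\overline m]$ is compact and $\varphi$ is bounded away from $0$ on it, so only the trace of $\nu$ on this compact set enters the estimate, and that trace is automatically finite by Proposition~\ref{EB:representationConvexGilles}.

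First I would observe that $f(X)\in L^2(\Prob)$ forces $X\in I$ a.s., hence $\supp(\Prob_{_X})\subset\overline I$, and for $N$ large every centroid $x_i^N$ lies in $(\underline m,\overline m)\subset\mathring{I}$ (a centroid is strictly interior to the support since $\varphi>0$). Decomposing the error across the Voronoï cells and applying the integral representation of Proposition~\ref{EB:representationConvexGilles} with $x:=X$ and $c:=x_i^N$, the affine term $\E[(X-x_i^N)f'_+(x_i^N)\1_{\{X\in C_i(\Gamma_N)\}}]$ vanishes by the stationarity identity of Corollary~\ref{EB:conditionalStationarity}, exactly as in the case where $\supp(\nu)$ is compact; bounding the two residual integrals of \eqref{EB:expansionerrorconvex} crudely yields, for every $i$,
\begin{equation*}
0\le\E\big[(f(X)-f(x_i^N))\1_{\{X\in C_i(\Gamma_N)\}}\big]\le\E\big[|\widehat X^N-X|\1_{\{X\in C_i(\Gamma_N)\}}\big]\,\nu\big(C_i(\Gamma_N)\big).
\end{equation*}
Summing in $i$, multiplying by $N^2$, using $\widehat X^N=x_i^N$ on $\{X\in C_i(\Gamma_N)\}$ and factoring out the worst cell gives
\begin{equation*}
0\le N^2\,\E\big[f(X)-f(\widehat X^N)\big]\le\Big(\sup_i N^2\!\int_{C_i(\Gamma_N)}\!|x_i^N-\xi|\,\Prob_{_X}(d\xi)\Big)\sum_{i=1}^N\nu\big(C_i(\Gamma_N)\big),
\end{equation*}
and since the cells are pairwise disjoint subsets of $[\underline m,\overline m]$ the second factor is at most $\nu(I)<+\infty$.

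It then remains to bound the first factor \emph{uniformly in the cell index $i$}, and this is the one step I expect to require care: Corollary~\ref{EB:cvgeDistssmaller} only provides the estimate cell by cell, so its Schwarz-inequality step must be rerun in the uniform regime, i.e.
\begin{equation*}
N^2\!\int_{C_i(\Gamma_N)}\!|x_i^N-\xi|\,\Prob_{_X}(d\xi)\le\Big(N^3\!\int_{C_i(\Gamma_N)}\!|x_i^N-\xi|^2\,\Prob_{_X}(d\xi)\cdot N\,\Prob_{_X}\big(C_i(\Gamma_N)\big)\Big)^{1/2},
\end{equation*}
and then part (b) of Theorem~\ref{EB:local} — which is precisely the strengthened hypothesis of the statement — makes the convergences \eqref{EB:cvgeCell} and \eqref{EB:cvgeDist} hold uniformly on $[\underline m,\overline m]$, so the bracket is bounded, uniformly in $i$ and in $N$ for $N$ large, by $\tfrac{\|\varphi\|_{1/3}}{12}\,c_{\varphi,1/3}\|\varphi^{2/3}\|_\infty$. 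Hence $\limsup_N\sup_i N^2\int_{C_i(\Gamma_N)}|x_i^N-\xi|\,\Prob_{_X}(d\xi)<+\infty$; combining this with $\sum_i\nu(C_i(\Gamma_N))\le\nu(I)$ and with the Jensen lower bound $\E[f(\widehat X^N)]\le\E[f(X)]$ (which disposes of the absolute value) gives $\limsup_N N^2|\E[f(X)]-\E[f(\widehat X^N)]|\le C_{f,X}<+\infty$, with $C_{f,X}=\nu(I)\cdot\tfrac{1}{2\sqrt3}\big(c_{\varphi,1/3}\|\varphi\|_{1/3}\|\varphi^{2/3}\|_\infty\big)^{1/2}$.
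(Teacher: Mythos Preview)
Your proof is correct and follows essentially the same approach as the paper's own proof, which consists of the single sentence ``This proof is exactly the same as above the Proposition.'' You have simply written out in detail what that sentence means: rerun the cell-by-cell decomposition, the Carr--Madan representation, the stationarity cancellation, and then replace the compact-set version of Theorem~\ref{EB:local} by its uniform version so that the supremum over \emph{all} cells of $N^2\int_{C_i(\Gamma_N)}|x_i^N-\xi|\,\Prob_{_X}(d\xi)$ is bounded without any restriction on $\supp(\nu)$. Your explicit reworking of the Schwarz step from Corollary~\ref{EB:cvgeDistssmaller} in the uniform regime is a useful clarification that the paper leaves implicit.

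One small remark: you read ``satisfies the conditions of Theorem~\ref{EB:local} uniformly'' as forcing $\supp(\Prob_{_X})=[\underline m,\overline m]$ compact (i.e.\ the hypotheses of part~(b)). The remark immediately following the proposition, which mentions Gaussian and Exponential laws, suggests the authors have in mind the weaker reading that only the \emph{conclusions} \eqref{EB:cvgeCell}--\eqref{EB:cvgeDist} hold uniformly on the (possibly unbounded) support. Your argument does not actually need compactness of the support anywhere --- the finiteness of $\nu$ on $I$ already gives $\sum_i\nu(C_i(\Gamma_N))\le\nu(I)<+\infty$ --- so you may safely drop that assumption and your proof covers both readings.
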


\begin{proof}
	This proof is exactly the same as above the Proposition.
\end{proof}

\begin{remark}
	It has not be shown yet that Gaussian or Exponential random variables satisfy the conditions of Theorem \ref{EB:local} uniformly but empirical tests tend to confirm that they exhibit the error bound property for Lipschitz convex functions. More details are given in the numerical part.
\end{remark}

\subsection{Differentiable functions}

In the following proposition, we deal with functions that are piecewise-defined and where their piecewise-defined derivatives are supposed to be locally-Lipschitz continuous or locally $\alpha$-H\"{o}lder continuous on the non-bounded parts of the interval. We define below what we mean by locally-Lipschitz and locally $\alpha$-H\"{o}lder.

\begin{definition}\label{EB:deflocliplocholder}
	\begin{itemize}
		\item A function $f:I \rightarrow \R$ is supposed to be locally-Lipschitz continuous, if
		      \begin{equation*}
			      \forall x,y \in I \quad \vert f(x) - f(y) \vert \leq [f]_{_{Lip, loc}} \vert x - y \vert \big( g(x) + g(y) \big)
		      \end{equation*}
		      where $[f]_{_{Lip, loc}}$ is a real constant and $g: \R \rightarrow \R_+$.

		\item A function $f:I \rightarrow \R$ is supposed to be locally $\alpha$-H\"{o}lder continuous, if
		      \begin{equation*}
			      \forall x,y \in I \quad \vert f(x) - f(y) \vert \leq [f]_{_{\alpha, loc}} \vert x - y \vert^{\alpha} \big( g(x) + g(y) \big)
		      \end{equation*}
		      where $[f]_{_{\alpha, loc}}$ is a real constant and $g: \R \rightarrow \R_+$.
	\end{itemize}

\end{definition}

\begin{proposition}
	Assume that the distribution $\Prob_{_{X}}$ of $X$ satisfies the conditions of the $L^r$-$L^s$-distortion mismatch Theorem \ref{EB:LrLsdistortionmismatch} and Theorem \ref{EB:local} concerning the local behaviours of optimal quantizers.
	If $f:\R \rightarrow \R$ is a piecewise-defined continuous function with finitely many breaks of affinity $\{ a_1, \dots, a_K \}$, where $ - \infty = a_0 < a_1 < \cdots < a_K < a_{K+1} = + \infty $, such that the piecewise-defined derivatives denoted $(f'_{k})_{k=0, \dots, d}$ are either
	\begin{enumerate}[label=(\alph*)]
		\item locally-Lipschitz continuous on $(a_k, a_{k+1})$ where $\exists \, q_k > 3$ such that the $q_k$-th power of $g_k : (a_k, a_{k+1}) \rightarrow \R_+$ defined in Definition \ref{EB:deflocliplocholder} are convex and $\big( \Vert g_k (X) \Vert_{_{q_k}} \big)_{k = 1, \dots, K} < + \infty$. Then there exists a real constant $C_{f, X} > 0$ such that
		      \begin{equation*}
			      \limsup_{N} N^2 \big\vert \E \big[ f(X) \big] - \E \big[ f ( \widehat X^N ) \big] \big\vert \leq C_{f, X} < + \infty.
		      \end{equation*}
		\item or locally $\alpha$-H\"{o}lder continuous on $(a_k, a_{k+1})$, $\alpha \in (0,1)$, where $\exists \, q_k> \frac{3}{2-\alpha}$ such that the $q_k$-th power of $g_k : (a_k, a_{k+1}) \rightarrow \R_+$ defined in Definition \ref{EB:deflocliplocholder} are convex and $\big( \Vert g_k (X) \Vert_{_{q_k}} \big)_{k = 1, \dots, K} < + \infty$. Then there exists a real constant $C_{f, X} > 0$ such that
		      \begin{equation*}
			      \limsup_{N} N^{1 + \alpha} \big\vert \E \big[ f(X) \big] - \E \big[ f ( \widehat X^N ) \big] \big\vert \leq C_{f, X} < + \infty.
		      \end{equation*}
	\end{enumerate}

\end{proposition}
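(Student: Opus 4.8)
The plan is to mimic the proof of Lemma~\ref{EB:affine}(a): decompose the error cell by cell, annihilate a first-order term with the stationarity Corollary~\ref{EB:conditionalStationarity} on the cells that avoid the knots $\{a_1,\dots,a_K\}$, control the second-order remainder there with the local behaviour of the quantizer (Theorem~\ref{EB:local}, Theorem~\ref{EB:distortimesf}) and the $L^r$-$L^s$ distortion mismatch Theorem~\ref{EB:LrLsdistortionmismatch}, and dispose of the finitely many cells that meet a knot exactly as in Lemma~\ref{EB:affine}(a). Write $\beta:=1$ in case~(a) and $\beta:=\alpha$ in case~(b). First I would write
\begin{equation*}
	\E\big[f(X)\big]-\E\big[f(\widehat X^N)\big]=\sum_{i=1}^N\int_{C_i(\Gamma_N)}\big(f(\xi)-f(x_i^N)\big)\,\Prob_{_{X}}(d\xi),
\end{equation*}
and split $\{1,\dots,N\}$ into the \emph{regular} indices, those $i$ for which $C_i(\Gamma_N)$ meets none of the $a_k$ (so $x_i^N\in(a_k,a_{k+1})$ for a single $k$ and $f=f_k$ on $C_i(\Gamma_N)$), and the at most $K$ \emph{exceptional} indices. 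On a regular cell the Taylor expansion $f(\xi)-f(x_i^N)=f_k'(x_i^N)(\xi-x_i^N)+\int_{x_i^N}^{\xi}\big(f_k'(t)-f_k'(x_i^N)\big)\,dt$ has its linear part killed, since $\int_{C_i(\Gamma_N)}(\xi-x_i^N)\,\Prob_{_{X}}(d\xi)=\E\big[(X-x_i^N)\1_{\{\widehat X^N=x_i^N\}}\big]=0$ by stationarity. Applying Definition~\ref{EB:deflocliplocholder} with exponent $\beta$, and using that $g_k^{q_k}$ convex forces $g_k(t)\le\max\big(g_k(x_i^N),g_k(\xi)\big)\le g_k(x_i^N)+g_k(\xi)$ for $t$ between $x_i^N$ and $\xi$, the remainder is at most $C\,|\xi-x_i^N|^{1+\beta}\big(g_k(x_i^N)+g_k(\xi)\big)$; hence the regular part of the error is, up to a constant, bounded by $\sum_k(A_k+B_k)$ with $A_k:=\sum_i g_k(x_i^N)\int_{C_i(\Gamma_N)}|\xi-x_i^N|^{1+\beta}\,\Prob_{_{X}}(d\xi)$ and $B_k:=\sum_i\int_{C_i(\Gamma_N)}|\xi-x_i^N|^{1+\beta}g_k(\xi)\,\Prob_{_{X}}(d\xi)$, the sums running over regular $i$ with $x_i^N\in(a_k,a_{k+1})$.

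For $B_k$ I would use H\"older in $\omega$: $B_k\le\E\big[|X-\widehat X^N|^{1+\beta}g_k(X)\1_{\{X\in(a_k,a_{k+1})\}}\big]\le\Vert X-\widehat X^N\Vert_{_{s}}^{1+\beta}\,\Vert g_k(X)\Vert_{_{q_k}}$ with $s:=(1+\beta)q_k'$ and $\tfrac1{q_k}+\tfrac1{q_k'}=1$. The thresholds $q_k>3$ in case~(a) and $q_k>\tfrac{3}{2-\alpha}$ in case~(b) are exactly what forces $s<3$; since $(\Gamma_N)$ is a sequence of quadratic ($r=2$) optimal quantizers, $\Vert X-\widehat X^N\Vert_{_{s}}=O(N^{-1})$, either by monotonicity of $L^p(\Prob)$-norms if $s\le2$, or by the $L^2$-$L^s$ distortion mismatch Theorem~\ref{EB:LrLsdistortionmismatch} if $s\in(2,3)$ (this is where the moment assumption on $\Prob_{_{X}}$ is invoked). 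Hence $B_k=O(N^{-(1+\beta)})$.

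For $A_k$, in case~(a) one has $A_k\le\E\big[\bar g_k(\widehat X^N)\,|X-\widehat X^N|^2\big]$, where $\bar g_k$ extends $g_k$ by $0$ outside $(a_k,a_{k+1})$, and Theorem~\ref{EB:distortimesf} gives $N^2A_k\to\mathcal{Q}_2(\Prob_{_{X}})\,\E[\bar g_k(X)]<\infty$. In case~(b) I would first use H\"older on each cell, $\int_{C_i}|\xi-x_i^N|^{1+\alpha}\Prob_{_{X}}(d\xi)\le\big(\int_{C_i}|\xi-x_i^N|^2\Prob_{_{X}}(d\xi)\big)^{\frac{1+\alpha}{2}}\Prob_{_{X}}(C_i)^{\frac{1-\alpha}{2}}$, and then H\"older over $i$ with exponents $\tfrac{2}{1+\alpha},\tfrac{2}{1-\alpha}$, so that $A_k\le\big(\sum_i g_k(x_i^N)\int_{C_i}|\xi-x_i^N|^2\Prob_{_{X}}(d\xi)\big)^{\frac{1+\alpha}{2}}\big(\sum_i g_k(x_i^N)\Prob_{_{X}}(C_i)\big)^{\frac{1-\alpha}{2}}$; the first bracket is $O(N^{-2})$ by Theorem~\ref{EB:distortimesf}, and the second stays bounded in $N$, because on regular cells of piece $k$ the conditional law of $X$ given $\widehat X^N=x_i^N$ is carried by $(a_k,a_{k+1})$, so $g_k^{q_k}$ convex and $\E[X\mid\widehat X^N]=\widehat X^N$ yield, via Jensen, $g_k(x_i^N)^{q_k}\le\E[g_k(X)^{q_k}\mid\widehat X^N=x_i^N]$, whence $\sum_i g_k(x_i^N)^{q_k}\Prob_{_{X}}(C_i)\le\Vert g_k(X)\Vert_{_{q_k}}^{q_k}$ and, by H\"older again, $\sum_i g_k(x_i^N)\Prob_{_{X}}(C_i)\le\Vert g_k(X)\Vert_{_{q_k}}\,\Prob_{_{X}}\big((a_k,a_{k+1})\big)^{1-1/q_k}$; thus $A_k=O(N^{-(1+\alpha)})$. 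The $\le K$ exceptional cells are treated as in Lemma~\ref{EB:affine}(a): their union over $N$ lies in a fixed compact $K_0$ by Lemma~\ref{EB:unioncellsincompactiscompact}; after subtracting from $f$ the appropriate one-sided affine tangent at $x_i^N$ (linear part killed by stationarity) and bounding the remainder by $O(|\xi-x_i^N|)$ thanks to the finiteness of the jumps of $f'$ at the knots, the $L^1$-local-distortion estimate $\limsup_N N^2\int_{C_i(\Gamma_N)}|\xi-x_i^N|\,\Prob_{_{X}}(d\xi)<\infty$ (uniform on $K_0$; Theorem~\ref{EB:local} with $r=2$, cf.\ Corollary~\ref{EB:cvgeDistssmaller}) makes their total contribution $O(N^{-2})$, which is $o(N^{-(1+\alpha)})$ when $\beta=\alpha<1$. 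Collecting everything yields $\limsup_N N^{1+\beta}\big|\E[f(X)]-\E[f(\widehat X^N)]\big|<\infty$, i.e.\ both stated rates.

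The main obstacle is the exponent bookkeeping that keeps the $L^r$-$L^s$ distortion mismatch Theorem~\ref{EB:LrLsdistortionmismatch} applicable: one needs $(1+\beta)q_k'<3$ (which is why $q_k>3$, resp.\ $q_k>\tfrac{3}{2-\alpha}$) and, simultaneously, $\Vert g_k(X)\Vert_{_{q_k}}<\infty$ for the H\"older splitting — these two demands pull $q_k$ in opposite directions, and the hypotheses are precisely what makes them compatible. A secondary technical point is that the convexity of $g_k^{q_k}$ is used twice for different purposes — once to dominate $g_k$ on a subinterval by its endpoint values inside the Taylor remainder, and once together with stationarity (Jensen) to keep $\E[g_k(\widehat X^N)^{q_k}]$ bounded — with the Jensen step legitimate only on the regular cells, where the conditional law of $X$ sits inside the relevant $(a_k,a_{k+1})$; and one must check that $f'$ has finite one-sided limits at the finite knots (which follows from the local regularity assumption, since $q_k>1$ forces $g_k$ to be $L^{q_k}$-integrable near each knot) so that the exceptional-cell contribution is genuinely $O(N^{-2})$.
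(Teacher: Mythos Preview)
Your proof is correct and follows the same overall architecture as the paper's: split cells into regular and exceptional, kill the first-order term by stationarity on regular cells, control the second-order remainder via H\"older and the $L^2$--$L^s$ distortion mismatch, and treat the $\le K$ exceptional cells with Lemma~\ref{EB:unioncellsincompactiscompact} and Corollary~\ref{EB:cvgeDistssmaller}. The differences are tactical, not structural. The paper does \emph{not} separate your $A_k$ and $B_k$: it keeps $2g_{k}(\widehat X^N)+g_{k}(X)$ as a single factor, applies one H\"older step to get $\Vert X-\widehat X^N\Vert_{(1+\beta)p}^{1+\beta}\big(2\Vert g_k(\widehat X^N)\Vert_{q_k}+\Vert g_k(X)\Vert_{q_k}\big)$, and then disposes of $\Vert g_k(\widehat X^N)\Vert_{q_k}$ by the same Jensen argument you invoke (convexity of $g_k^{q_k}$ plus stationarity). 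This makes case~(b) noticeably shorter than your double-H\"older route for $A_k$, at the cost of always pushing the distortion exponent up to $(1+\beta)p$ even for the $g_k(\widehat X^N)$ part. Your use of Theorem~\ref{EB:distortimesf} for $A_k$ in case~(a) is a legitimate alternative the paper does not exploit here; it gives an exact limit rather than an upper bound, but buys nothing for the stated $\limsup$ conclusion. On the exceptional cells the paper is more direct than you: it simply writes $f(x_i^N)-f(\xi)=\int_\xi^{x_i^N}f'(u)\,du$ and bounds $|f'|$ on the compact $K_0$, without subtracting an affine tangent first. Your closing remark about needing finite one-sided limits of $f'$ at the knots is a fair caveat; the paper asserts boundedness of $f'$ on $K_0$ without further comment.
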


\begin{proof}
	\begin{enumerate}[label=(\alph*), wide, labelwidth=!, labelindent=2pt]
		\item Let $\Gamma_N = \{ x_1^{N}, \dots, x_N^{N} \}$ be a $L^2$- optimal quantizer at level $N \geq 1$. In the first place, we define the set of all the indexes of the Vorono\"{i} cells that contains a break of affinity
		      \begin{equation*}
			      I_{reg}^N = \big\{ i = 1, \dots, N : C_i(\Gamma_N) \cap [a_1, a_K] \neq \emptyset \big\}.
		      \end{equation*}
		      Hence,
		      \begin{equation*}
			      \begin{aligned}
				      \E \big[ f ( \widehat X^N ) \big] - \E \big[ f(X) \big]
				       & = \underbrace{\sum_{i \in I_{reg}^N} \int_{C_i(\Gamma_N)} \big( f(x_i^N) - f(\xi)\big) \Prob_{_{X}} ( d \xi )}_{(A)}                                   \\
				       & \qquad \qquad \qquad \qquad + \underbrace{\sum_{i \notin I_{reg}^N } \int_{C_i(\Gamma_N)} \big( f(x_i^N) - f(\xi) \big) \Prob_{_{X}} ( d \xi )}_{(B)}.
			      \end{aligned}
		      \end{equation*}
		      First, we deal with the $(B)$ term. As, $i \notin I_{reg}^N$, $f$ is differentiable in $C_i(\Gamma_N)$ and admits a first-order Taylor expansion at the point $x_i^N$, moreover by Corollary \ref{EB:conditionalStationarity}, $\int_{C_i(\Gamma_N)} f' (x_i^N) ( \xi - x_i^N ) \Prob_{_{X}} (d \xi) = 0$, hence
		      \begin{equation*}
			      \int_{C_i(\Gamma_N)} \big( f(x_i^N) - f(\xi) \big) \Prob_{_{X}} ( d \xi ) = \int_{C_i(\Gamma_N)} \int_{0}^{1} \big( f' (x_i^N) - f' (t x_i^N + (1-t) \xi) \big) ( x_i^N - \xi ) dt \Prob_{_{X}} ( d \xi ).
		      \end{equation*}
		      Now, we take the absolute value and we use the locally Lipschitz property of the derivative, yielding
		      \begin{equation}\label{EB:usinglocallipcondi}
			      \begin{aligned}
				       & \bigg\vert \int_{C_i(\Gamma_N)} \big( f(x_i^N) - f(\xi) \big) \Prob_{_{X}} ( d \xi ) \bigg\vert                                                                                                  \\
				       & \qquad \qquad \leq \int_{C_i(\Gamma_N)} \int_{0}^{1} \vert f' (x_i^N) - f' (t x_i^N + (1-t) \xi ) \vert \vert x_i^N - \xi \vert dt \Prob_{_{X}} ( d \xi )                                        \\
				       & \qquad \qquad \leq [f']_{_{k, Lip, loc}} \int_{C_i(\Gamma_N)} \int_{0}^{1} (1-t) \vert x_i^N - \xi \vert^2 \big( g_{k_i}(x_i^N) + g_{k_i}(t x_i^N + (1-t) \xi ) \big) dt \Prob_{_{X}} ( d \xi ),
			      \end{aligned}
		      \end{equation}
		      with $k_i := \{ k = 0, \dots, d : x_i \in (a_k, a_{k+1}) \}$. Under the convex hypothesis of $g^{q_{k_i}}_{k_i}$, we have that
		      \begin{equation*}
			      g_{k_i}(t x_i^N + (1-t) \xi) \leq \max \big( g_{k_i}( x_i^N ), g_{k_i}( \xi ) \big) \leq g_{k_i}( x_i^N ) + g_{k_i}( \xi ),
		      \end{equation*}
		      thus
		      \begin{equation*}
			      \begin{aligned}
				      \int_{C_i(\Gamma_N)} \int_{0}^{1} (1-t) \vert x_i^N - \xi \vert^2
				       & \big( g_{k}(x_i^N) + g_k(t x_i^N + (1-t) \xi) \big) dt \Prob_{_{X}} ( d \xi )                                                   \\
				       & \leq \frac{1}{2} \int_{C_i(\Gamma_N)} \vert x_i^N - \xi \vert^2 \big( 2 g_{k}(x_i^N) + g_{k}(\xi) \big) \Prob_{_{X}} ( d \xi ).
			      \end{aligned}
		      \end{equation*}
		      Now, taking the sum over all $i \notin I_{reg}^N$ and denoting $[f']_{_{Lip, loc}} := \max_{k} [f']_{_{k, Lip, loc}}$
		      \begin{equation}\label{EB:boundp2}
			      \begin{aligned}
				      \big\vert (B) \big\vert
				       & \leq \frac{1}{2} [f']_{_{Lip, loc}} \sum_{i \notin I_{reg}^N } \int_{C_i(\Gamma_N)} \vert x_i^N - \xi \vert^2 \big( 2 g_{k_i}(x_i^N) + g_{k_i}(\xi) \big) \Prob_{_{X}} ( d \xi ) \\
				       & \leq \frac{K}{2} [f']_{_{Lip, loc}} \max_{k} \E \Big[ \vert \widehat X^N - X \vert^2 \big( 2 g_{k}(\widehat X^N) + g_{k}(X) \big) \Big]                                          \\
				       & \leq \frac{K}{2} [f']_{_{Lip, loc}} \max_{k} \Vert \widehat X^N - X \Vert^2_{_{2 p_k}} \big( 2 \Vert g_{k}(\widehat X^N) \Vert_{_{q_k}} + \Vert g_{k}(X) \Vert_{_{q_k}} \big)    \\
				       & \leq \frac{K}{2} [f']_{_{Lip, loc}} \Vert \widehat X^N - X \Vert^2_{_{2 p}} \max_{k} \big( 2 \Vert g_{k}(\widehat X^N) \Vert_{_{q_k}} + \Vert g_{k}(X) \Vert_{_{q_k}} \big)      \\
				       & \leq \frac{3 K}{2} [f']_{_{Lip, loc}} \Vert \widehat X^N - X \Vert^2_{_{2 p}} \max_{k} \Vert g_{k}(X) \Vert_{_{q_k}}                                                             \\
			      \end{aligned}
		      \end{equation}
		      using H\"{o}lder inequality, such that $\frac{1}{p_k} + \frac{1}{q_k} < 1$ and the convexity of $g^{q_k}$. Under the hypothesis $q_k > 3$, $p_k$ has to be in contained in the interval $(1,3/2)$, hence $p$ is defined as $p := \max_k p_k$ and using the non-decreasing property of the $L^p$ norm, we obtain the fourth inequality in \eqref{EB:boundp2}. Now, if we use the $L^r$-$L^s$-distortion mismatch Theorem \ref{EB:LrLsdistortionmismatch} with $r = 2$ and $s = 2p <3$ under the condition $X \in L^{\frac{2p}{3-2p}+\delta}(\Prob)$, we have
		      \begin{equation}\label{EB:resforp2}
			      \begin{aligned}
				      N^2 \big\vert (B) \big\vert
				       & \leq N^2 \frac{3 K}{2} [f']_{_{Lip, loc}} \Vert \widehat X^N - X \Vert^2_{_{2p}} \max_{k} \Vert g_{k}(X) \Vert_{_{q_k}} \\
				       & \qquad \xrightarrow{N \rightarrow +\infty } C_2 < + \infty.
			      \end{aligned}
		      \end{equation}
		      Secondly, we take care of the $(A)$ term. Using Lemma \ref{EB:unioncellsincompactiscompact} stating that the union over all $N$ of all the cells where their intersection with the interval $[a_1, a_K]$ is non empty lies in a compact $K_0$, namely
		      \begin{equation*}
			      \bigcup_N \bigcup_{C_i(\Gamma_N) \cap [a_1, a_K] \neq \emptyset} C_i ( \Gamma_N ) \subset K_0
		      \end{equation*}
		      and using that $f'$ is bounded on $K_0$ by $[f']_{_{Lip, K_0}}$, we can use the following integral representation of $f$
		      \begin{equation*}
			      f(x) = \int_{0}^x f'(u) du + f(0)
		      \end{equation*}
		      and the stationarity property of the optimal quantizer on $C_i(\Gamma_N)$, yielding
		      \begin{equation*}
			      \begin{aligned}
				      \bigg\vert \int_{C_i(\Gamma_N)} \big( f(x_i^N) - f(\xi) \big) \Prob_{_{X}} ( d \xi ) \bigg\vert
				       & = \bigg\vert \int_{C_i(\Gamma_N)} \int_{\xi}^{x_i^N} f'(u) du \Prob_{_{X}} ( d \xi ) \bigg\vert \\
				       & \leq [f']_{_{Lip, K_0}} \int_{C_i(\Gamma_N)} \vert \xi - x_i^N \vert \Prob_{_{X}} ( d \xi ).
			      \end{aligned}
		      \end{equation*}
		      Now, we sum among all $i \in I_{reg}^N$
		      \begin{equation*}
			      \big\vert (A) \big\vert \leq [f']_{_{Lip, K_0}} \sum_{i \in I_{reg}^N} \int_{C_i(\Gamma_N)} \vert \xi - x_i^N \vert \Prob_{_{X}} ( d \xi ).
		      \end{equation*}
		      Hence, using the result concerning the local behaviour of optimal quantizers Corollary \ref{EB:cvgeDistssmaller} as $[a_1, a_K]$ is compact, we have
		      \begin{align}\label{EB:resforp1_loclip}
			      N^2 \big\vert (A) \big\vert
			       & \leq N^2 [f']_{_{Lip, K_0}} \sum_{i \in I_{reg}^N} \int_{C_i(\Gamma_N)} \vert \xi - x_i^N \vert \Prob_{_{X}} ( d \xi ) \notag     \\
			       & \leq N^2 K [f']_{_{Lip, K_0}} \sup_{i : x_i^N \in K_0} \int_{C_i(\Gamma_N)} \vert \xi - x_i^N \vert \Prob_{_{X}} ( d \xi ) \notag \\
			       & \qquad \xrightarrow{N \rightarrow +\infty } C_1 < + \infty.
		      \end{align}
		      Finally, using \eqref{EB:resforp1_loclip} and \eqref{EB:resforp2}, we have the desired result
		      \begin{equation*}
			      N^2 \big\vert \E \big[ f(X) \big] - \E \big[ f ( \widehat X^N ) \big] \big\vert \leq N^2 \Big( \big\vert (A) \big\vert + \big\vert (B) \big\vert \Big) \xrightarrow{N \rightarrow +\infty } C_1 + C_2 < + \infty.
		      \end{equation*}

		\item When the piecewise-defined derivatives are locally $\alpha$-H\"{o}lder continuous on $(-\infty, a_1]$ and $[a_K, +\infty)$, $\alpha \in (0,1)$, the proof is very close to the locally Lipschitz case. Indeed, the first difference is in \eqref{EB:usinglocallipcondi}, where the $\vert x_i^N - \xi \vert^2 $ is replaced by $\vert x_i^N - \xi \vert^{1 + \alpha}$ and the constant is the one of the locally $\alpha$-H\"{o}lder hypothesis. This implies that \eqref{EB:boundp2} is replaced by
		      \begin{equation*}
			      \vert (B) \vert \leq \frac{3 K [f']_{_{Hol, loc}}}{2} \Vert \widehat X^N - X \Vert^{1+\alpha}_{_{(1+\alpha)p}} \max_{k} \Vert g_{k}(X) \Vert_{_{q_k}}.
		      \end{equation*}
		      Finally, using the $L^r$-$L^s$-distortion mismatch Theorem \ref{EB:LrLsdistortionmismatch} with $r = 2$ and $s = (1+\alpha)p <3$ under the condition $X \in L^{\frac{(1+\alpha)p}{3-(1+\alpha)p}+\delta}(\Prob)$, we have
		      \begin{equation*}
			      \begin{aligned}
				      N^{1+\alpha} \vert (B) \vert
				       & \leq N^{1+\alpha} \frac{3 K [f']_{_{Hol, loc}}}{2} \Vert \widehat X^N - X \Vert^{1+\alpha}_{_{(1+\alpha)p}} \max_{k} \Vert g_{k}(X) \Vert_{_{q_k}} \\
				       & \qquad \qquad \xrightarrow{N \rightarrow +\infty } C_3 < + \infty.
			      \end{aligned}
		      \end{equation*}
		      The other parts of the proof are identical, yielding the desired result.
	\end{enumerate}
\end{proof}

\begin{remark}
	If one strengthens the hypothesis concerning the piecewise locally Lipschitz continuous derivative and considers in place that the derivative is piecewise Lipschitz continuous, then the hypothesis that $X$ should satisfy the conditions of Theorem \ref{EB:LrLsdistortionmismatch} can be relaxed. Indeed, the term $\frac{3 K}{2} [f']_{_{Lip, loc}} \Vert \widehat X^N - X \Vert^2_{_{2 p}} \max_{k} \Vert g_{k}(X) \Vert_{_{q_k}}$ in \eqref{EB:boundp2} would become $\frac{1}{2} [f']_{_{Lip}} \Vert \widehat X^N - X \Vert^2_{_{2}}$ and we would conclude using Zador's Theorem \ref{EB:zador}.
\end{remark}

\section{Weak Error and Richardson-Romberg Extrapolation} \label{EB:section:RR}

One can improve the previous speeds of convergence using Richardson-Romberg extrapolation method. The Richardson extrapolation is a method that was originally introduced in numerical analysis by Richardson in 1911 (see \cite{richardson1911ix}) and developed later by Romberg in 1955 (see \cite{romberg1955vereinfachte}) whose aim was to speed-up the rate of convergence of a sequence, to accelerate the research of a solution of an ODE's or to approximate more precisely integrals.

\cite{talay1990romberg} and \cite{pages2007multi,pages2018numerical} used this concept for the computation of the expectation $\E \big[ f(X_T) \big]$ of a diffusion $(X_t)_{t \in [0, T]}$ that cannot be simulated exactly at a given time $T$ but can be approximated by a simulable process $\widetilde{X}_T^{(h)}$ using a Euler scheme with time step $h = T/n$ and $n$ the number of time step. The main idea is to use the weak error expansion of the approximation in order to highlight the term we would \textit{kill}. For example, using the following weak time discretization error of order $1$
\begin{equation*}
	\E \big[ f (X_T) \big] = \E \big[ f ( \widetilde{X}_T^{(h)}) \big] + \frac{c_1}{n} + O (n^{-2}),
\end{equation*}
one reduces the error of the approximation using a linear combination of the approximating process $\widetilde{X}_T^{(h)}$ and a refiner process $\widetilde{X}_T^{(h/2)}$, namely
\begin{equation*}
	\E \big[ f (X_T) \big] = \E \big[ 2 f ( \widetilde{X}_T^{(h/2)}) - f ( \widetilde{X}_T^{(h)}) \big] - \frac{1}{2} \frac{c_2}{n^2} + O (n^{-2}).
\end{equation*}

Our goal within the optimal quantization framework is to improve the speed of convergence of the cubature formula using the same ideas. Let us consider a random variable $X : (\Omega, \A, \Prob) \rightarrow \R $ and a quadratic-optimal quantizer $\widehat X^N$ of $X$. In our case we show that, if we are in dimension one there exists, for some functions $f$, a \textit{weak error expansion} of the form:
\begin{equation*}
	\E \big[ f (X) \big] = \E \big[ f ( \widehat X^N ) \big] + \frac{c_2}{N^2} + O (N^{-(2 + \beta)})
\end{equation*}
with $\beta \in (0, 1)$. We present in Section \ref{EB:subsection:higherdim} a similar result in higher dimension.

\subsection{In dimension one}

This first result is focused on function $f:\R \rightarrow \R$ with Lipschitz continuous second derivative. In that case, we have a \textit{weak error quantization} of order two. The first term of the expansion is equal to zero, thanks to the stationarity of the quadratic optimal quantizer.

\begin{proposition}
	Let $f:\R \rightarrow \R$ be a twice differentiable function with Lipschitz continuous second derivative. Let $X : (\Omega, \A, \Prob) \rightarrow \R $ be a random variable and the distribution of $\Prob_{_{X}}$ of $X$ has a non-zero absolutely continuous density $\varphi$ and, for every $N\geq 1$, let $\Gamma_N$ be an optimal quantizer at level $N\geq 1$ for $X$. Then, $\forall \, \beta \in (0, 1)$, we have the following expansion
	\begin{equation*}
		\E \big[ f(X) \big] = \E \big[ f(\widehat X^N) \big] + \frac{c_2}{N^2} + O ( N^{-(2+\beta)} ).
	\end{equation*}
	Moreover, if $\varphi : [a,b] \rightarrow \R_+$ is a Lipschitz continuous probability density function, bounded away from $0$ on $[a,b]$ then we can choose $\beta = 1$, yielding
	\begin{equation*}
		\E \big[ f(X) \big] = \E \big[ f(\widehat X^N) \big] + \frac{c_2}{N^2} + O ( N^{-3} ).
	\end{equation*}
\end{proposition}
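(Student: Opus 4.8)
The plan is to run a second-order Taylor expansion with integral remainder on each Vorono\"i cell, cancel the first-order contribution by stationarity, identify the $N^{-2}$ term via Theorem \ref{EB:distortimesf}, and control the leftover by the $L^r$–$L^s$ distortion mismatch theorem. Concretely, write
\[
	\E\big[f(X)\big]-\E\big[f(\widehat X^N)\big]=\sum_{i=1}^N\int_{C_i(\Gamma_N)}\big(f(\xi)-f(x_i^N)\big)\,\Prob_{_{X}}(d\xi),
\]
and on $C_i(\Gamma_N)$ use $f(\xi)=f(x_i^N)+f'(x_i^N)(\xi-x_i^N)+\int_0^1(1-t)f''\big(x_i^N+t(\xi-x_i^N)\big)(\xi-x_i^N)^2\,dt$. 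Summing the linear term yields $\E\big[f'(\widehat X^N)(X-\widehat X^N)\big]=0$ by Corollary \ref{EB:conditionalStationarity}. Adding and subtracting $f''(x_i^N)$ inside the remainder and using $\int_0^1(1-t)t\,dt=\tfrac16$ together with the Lipschitz property of $f''$ leaves
\[
	\E\big[f(X)\big]-\E\big[f(\widehat X^N)\big]=\tfrac12\,\E\big[f''(\widehat X^N)\,|X-\widehat X^N|^2\big]+R_N,\qquad |R_N|\le\tfrac{[f'']_{_{Lip}}}{6}\,\E\big[|X-\widehat X^N|^3\big].
\]

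For the first term, Theorem \ref{EB:distortimesf} with $g=f''$ gives $N^2\cdot\tfrac12\,\E\big[f''(\widehat X^N)|X-\widehat X^N|^2\big]\to c_2:=\tfrac12\,\mathcal Q_2(\Prob_{_{X}})\int f''\,d\Prob_{_{X}}$, which produces the $c_2/N^2$ term. For $R_N$ I would split $3=(2+\beta)+(1-\beta)$ and apply H\"older's inequality to obtain $\E[|X-\widehat X^N|^3]\le\|X-\widehat X^N\|_{s}^{\,2+\beta}\,\|X-\widehat X^N\|_{q}^{\,1-\beta}$ for a suitable $s\in(2+\beta,3)$ and conjugate exponent $q$. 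The factor $\|X-\widehat X^N\|_{q}$ stays bounded because $\widehat X^N=\E[X\mid\widehat X^N]$, so Jensen gives $\|\widehat X^N\|_{q}\le\|X\|_{q}$, hence $\|X-\widehat X^N\|_{q}\le2\|X\|_{q}$ (this is where a mild integrability assumption on $X$, already implicit in $f(X)\in L^1$, is used). The factor $\|X-\widehat X^N\|_{s}$ with $s\in(2,3)$ is exactly the quantity bounded by Theorem \ref{EB:LrLsdistortionmismatch} for $r=2$: it is $O(N^{-1})$, whence $R_N=O(N^{-(2+\beta)})$. Combining the two pieces gives the expansion for every $\beta\in(0,1)$; note that the integrability threshold $s/(3-s)+\delta$ blows up as $\beta\uparrow1$, which is why the endpoint is treated separately.

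For the refined statement with $\beta=1$, under the stronger hypothesis that $\varphi$ is Lipschitz and bounded away from $0$ on the compact $[a,b]$, Theorem \ref{EB:local}(b) upgrades all local estimates to \emph{uniform} ones on $[a,b]$. In particular the Vorono\"i cells all have length $O(1/N)$ uniformly, so $\|X-\widehat X^N\|_{\infty}=O(1/N)$ and therefore $\E[|X-\widehat X^N|^3]\le\|X-\widehat X^N\|_{\infty}\,\E[|X-\widehat X^N|^2]=O(N^{-3})$, giving $R_N=O(N^{-3})$; the same uniformity governs how fast $N^2\cdot\tfrac12\,\E[f''(\widehat X^N)|X-\widehat X^N|^2]$ reaches its limit $c_2$.

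I expect the delicate point to be the first term: Theorem \ref{EB:distortimesf} is stated only as a limit, so extracting a genuine \emph{rate} $N^2\cdot\tfrac12\,\E[f''(\widehat X^N)|X-\widehat X^N|^2]=c_2+O(N^{-\beta})$, rather than merely $c_2+o(1)$, is where the regularity of $\varphi$ must be spent. In the compact case it follows from the uniform convergence in Theorem \ref{EB:local}(b) (replace $f''$ by its value at the centroid at cost $\tfrac{[f'']_{_{Lip}}}{2}\E[|X-\widehat X^N|^3]=O(N^{-3})$, then sum the uniformly controlled local distortions); in the general case one reduces, again by the Lipschitz property of $f''$ and the distortion-mismatch bound $\E[|X-\widehat X^N|^3]=O(N^{-(2+\beta)})$, to quantifying the convergence of the normalized quadratic distortion itself. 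The choice of H\"older exponents and the bookkeeping of the moment conditions on $X$ are then routine.
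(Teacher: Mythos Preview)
Your proof follows the paper's skeleton exactly: second-order Taylor expansion, cancellation of the linear term via Corollary~\ref{EB:conditionalStationarity}, identification of the $N^{-2}$ coefficient through Theorem~\ref{EB:distortimesf}, and control of the remainder via Theorem~\ref{EB:LrLsdistortionmismatch}. The one genuine difference is the remainder bound. The paper does \emph{not} go through $\E\big[|X-\widehat X^N|^3\big]$; it exploits the (tacitly assumed, cf.\ the introduction) boundedness of $f''$ through the elementary interpolation
\[
|f''(x)-f''(y)|\le 2\,\|f''\|_\infty^{\,1-\beta}\,[f'']_{_{Lip}}^{\,\beta}\,|x-y|^{\beta},\qquad \beta\in(0,1),
\]
which gives $|R_N|\le C_{\beta,f''}\,\E\big[|X-\widehat X^N|^{2+\beta}\big]$ directly, so the mismatch theorem is invoked once at $s=2+\beta\in(2,3)$. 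Your detour via $\E\big[|X-\widehat X^N|^3\big]$ and H\"older is valid, but it forces an auxiliary moment $X\in L^q$ for some $q>3$ in addition to the mismatch integrability threshold; the paper's interpolation trick keeps the exponent strictly below $3$ throughout and is the cleaner route.

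You are right to single out the ``delicate point'': Theorem~\ref{EB:distortimesf} is stated only as a limit, and the paper's proof, exactly like yours, does not justify that $\tfrac12\,\E\big[f''(\widehat X^N)|X-\widehat X^N|^2\big]=c_2/N^2+O(N^{-(2+\beta)})$ rather than merely $c_2/N^2+o(N^{-2})$; it simply writes $c_2/N^2$ and moves on. For the compactly supported case $\beta=1$ the paper is in fact terser than you (one sentence), so your argument via $\|X-\widehat X^N\|_\infty=O(N^{-1})$ from Theorem~\ref{EB:local}(b) is more explicit than what the paper provides.
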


\begin{proof}
	If $f$ is twice differentiable with Lipschitz continuous second derivatives, we have the following expansion
	\begin{equation*}
		f(x) = f(y) + f'(y)(x-y) + \frac{1}{2} f'' (y) (x-y)^2 + \int_{0}^{1} (1-t) \big( f'' (tx + (1-t)y) - f'' (y) \big) (x-y)^2 dt
	\end{equation*}
	hence replacing $x$ and $y$ by $X$ and $\widehat X^N$ respectively and taking the expectation yields
	\begin{equation*}
		\E \big[ f(X) \big] = \E \big[ f(\widehat X^N) \big] + \frac{1}{2} \E \big[ f'' ( \widehat X^N) \vert X - \widehat X^N \vert^2 \big] + R (X, \widehat X^N)
	\end{equation*}
	where $R (X, \widehat X) = \int_{0}^{1} (1-t) \E \big[ \big( f'' (tX + (1-t)\widehat X) - f'' (\widehat X) \big) \vert X-\widehat X \vert^2 \big] dt$.

	First, using Theorem \ref{EB:distortimesf} with $ f'' $, we have the following limit
	\begin{equation*}
		\lim_{N \rightarrow + \infty} N^2 \E \big[ f'' (\widehat X^N ) \vert X - \widehat X^N \vert^2 \big] = \mathcal{Q}_2(\Prob_{_{X}}) \int f'' (\xi) \Prob_{_{X}} ( d \xi ),
	\end{equation*}
	hence
	\begin{equation*}
		\E \big[ f(X) \big] = \E \big[ f(\widehat X^N) \big] + \frac{c_2}{N^2} + R (X, \widehat X^N).
	\end{equation*}
	Now, we look closely at asymptotic behaviour of $R (X, \widehat X^N)$. One notices that, if we consider a Lipschitz continuous function $g:\R \rightarrow \R$, for any fixed $\alpha\!\in (0,1)$,
	\begin{equation*}
		\forall\, x,\, y \!\in \R,\quad \vert g(x) - g(y)\vert \leq 2 \Vert g \Vert_{_{\infty}}^\alpha [g]_{_{Lip}}^{1-\alpha} \vert x - y \vert^{1 - \alpha}.
	\end{equation*}
	In our case, taking $g \equiv f''$, we have
	\begin{equation*}
		\begin{aligned}
			 & \E \Big[ \big( f'' (tX + (1-t)\widehat X^N) - f'' (\widehat X^N) \big) \vert X-\widehat X^N \vert^2 \Big]                                                                                                               \\
			 & \qquad \qquad \qquad \qquad \qquad \qquad \qquad \leq \E \Big[ 2 \Vert f'' \Vert_{_{\infty}}^\alpha [f'']_{_{Lip}}^{1-\alpha} t^{1-\alpha} \vert X - \widehat X^N \vert^{1 - \alpha} \vert X-\widehat X^N \vert^2 \Big] \\
			 & \qquad \qquad \qquad \qquad \qquad \qquad \qquad \leq C_{\beta, f''} t^{\beta} \E \big[ \vert X - \widehat X^N \vert^{2 + \beta} \big]
		\end{aligned}
	\end{equation*}
	with $0 < \beta < 1$ where $\beta = 1 - \alpha$, hence
	\begin{equation*}
		R (X, \widehat X^N) \leq \widetilde{C}_{\beta, f''} \E \big[ \vert X - \widehat X^N \vert^{2 + \beta} \big],
	\end{equation*}
	with $\widetilde{C}_{\beta, f''} = C_{\beta, f''} \frac{1}{(2+\beta) (1+\beta)}$. Using now Theorem \ref{EB:LrLsdistortionmismatch} with $r = 2$ and $s = 2+\beta$, we have the desired result: $ \E \big[ \vert X - \widehat X^N \vert^{2 + \beta} \big] = O ( N^{ -(2+\beta) } ) $ and finally
	\begin{equation*}
		\E \big[ f(X) \big] = \E \big[ f(\widehat X^N) \big] + \frac{c_2}{N^2} + O ( N^{-(2+\beta)} ),
	\end{equation*}
	for every $\beta \in (0,1)$.
	If moreover, the density $\varphi$ of $X$ is Lipschitz continuous, bounded away from $0$ on $[a,b]$ then we can take $\beta = 1$.

\end{proof}

Now, following the Richardson-Romberg idea, we could combine approximations with optimal quantizers $\widehat X^N$ of size $N$ and $\widehat X^{\widetilde N}$ of size $\widetilde N$, with $\widetilde N>N$ in order to \textit{kill} the residual term, leading
\begin{equation} \label{EB:RR_quantif}
	\E \big[ f ( X ) \big] = \E \Bigg[ \frac{\widetilde N^2 f (\widehat X^{\widetilde N}) - N^2 f (\widehat X^N)}{\widetilde N^2 - N^2} \Bigg] + O ( N^{-(2+\beta)} ).
\end{equation}

\begin{remark}
    For the choice of $\widetilde N$, we consider $\widetilde N := k \times N$. A natural choice for $k$ could be $k = 2$ or $ k = \sqrt 2$ but note that the complexity is proportional to $(k+1)N$. In practice it is therefore preferable to take a small $k$ that does not increase complexity too much. For the numerical example, we choose $\widetilde N := k \times N$ with $k = 1.2$, this is arbitrary and probably not optimal, however even with this $k$, we attain a weak error of order $3$.
\end{remark}

\subsection{A first extension in higher dimension} \label{EB:subsection:higherdim}

In this part, we give a first result on higher dimension concerning the weak error expansion of $\E \big[ f(X) \big]$ when approximated by $\E \big[ f(\widehat X^{N}) \big]$. In the next part, we use the following matrix norm: let $M \in \R^{d\times d}$, then $\vertiii{M} := \sup_{u:\vert u \vert=1} \vert u^{T} M u \vert$.
\begin{proposition}
	Let $f:\R^d \rightarrow \R$ be a twice differentiable function with a bounded and Lipschtiz Hessian $H$, namely $\forall x,y \in \R^d, \, \vertiii{H(x) - H(y)} \leq [H]_{_{Lip}} \vert x - y \vert$. Let $X : (\Omega, \A, \Prob) \rightarrow \R^d $ be a random vector with independent components $(X_k)_{k = 1, \dots, d}$. For every $(N_k)_{k = 1, \dots, d} \geq 1$, let $(\widehat X^{N_d}_d)_{k = 1, \dots, d}$ be quadratic optimal quantizers of $(X_k)_{k = 1, \dots, d}$ taking values in the grids $(\Gamma_{N_k})_{k = 1, \dots, d}$ respectively and we define $\widehat X^{N}$ as the product quantizer $X$ taking values in the finite grid $\Gamma_N := \bigotimes_{k=1, \dots, d} \Gamma_{N_d}$ of size $N := N_1 \times \cdots \times N_d$. Then, we have the following expansion
	\begin{equation*}
		\E \big[ f(X) \big] = \E \big[ f(\widehat X^{N}) \big] + \sum_{k=1}^{d} \frac{c_k}{N_k^2} + O \bigg( \Big( \min_{k=1:d} N_k \Big)^{-(2+\beta)} \bigg).
	\end{equation*}
\end{proposition}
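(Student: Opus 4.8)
The plan is to do a second-order Taylor expansion of $f$ in $\R^d$ around $\widehat X^N$, exactly mirroring the one-dimensional proof, and then to exploit the product structure of the quantizer together with the independence of the components to split the quadratic term into a sum of one-dimensional contributions. Write, for $x,y\in\R^d$,
\begin{equation*}
	f(x) = f(y) + \langle \nabla f(y), x-y\rangle + \tfrac12 (x-y)^T H(y) (x-y) + R(x,y),
\end{equation*}
where $R(x,y) = \int_0^1 (1-t)\,(x-y)^T\big(H(tx+(1-t)y)-H(y)\big)(x-y)\,dt$. Substituting $x=X$, $y=\widehat X^N$ and taking expectations, the linear term vanishes by the stationarity of the product quantizer (each component $\widehat X_k^{N_k}$ is a quadratic optimal quantizer of $X_k$, so $\E[X\mid\widehat X^N]=\widehat X^N$, whence $\E[\langle\nabla f(\widehat X^N),X-\widehat X^N\rangle]=0$ via Corollary \ref{EB:conditionalStationarity} applied coordinatewise). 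This leaves
\begin{equation*}
	\E\big[f(X)\big] = \E\big[f(\widehat X^N)\big] + \tfrac12 \E\big[(X-\widehat X^N)^T H(\widehat X^N)(X-\widehat X^N)\big] + \E\big[R(X,\widehat X^N)\big].
\end{equation*}

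Next I would expand the quadratic form as $\sum_{k,\ell} \partial^2_{k\ell}f(\widehat X^N)(X_k-\widehat X_k^{N_k})(X_\ell-\widehat X_\ell^{N_\ell})$ and argue that the off-diagonal terms ($k\neq\ell$) vanish. This is where the independence of the components is crucial: conditionally on $\widehat X^N$, the pair $(X_k-\widehat X_k^{N_k}, X_\ell-\widehat X_\ell^{N_\ell})$ has independent, centered coordinates (each is centered by the one-dimensional stationarity applied within its own $\sigma$-algebra, and independence of the $X_j$'s transfers to the conditional law given the product quantizer), so the conditional expectation of the cross product is $0$. For the diagonal terms, apply Theorem \ref{EB:distortimesf} in dimension one to $g=\partial^2_{kk}f$ and the quantizer $\widehat X_k^{N_k}$ of $X_k$ — here one must be slightly careful, since $\partial^2_{kk}f(\widehat X^N)$ also depends on the other components; the clean way is to condition on $(\widehat X_\ell^{N_\ell})_{\ell\neq k}$ and apply Theorem \ref{EB:distortimesf} to the one-dimensional slice, then average, giving $N_k^2\,\E[\partial^2_{kk}f(\widehat X^N)|X_k-\widehat X_k^{N_k}|^2]\to \mathcal Q_2(\Prob_{X_k})\E[\partial^2_{kk}f(X)]=:2c_k$. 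Summing over $k$ produces the announced $\sum_k c_k/N_k^2$.

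Finally I would control the remainder $\E[R(X,\widehat X^N)]$. Using the Lipschitz property of $H$ together with the interpolation trick from the one-dimensional proof, $\vertiii{H(tX+(1-t)\widehat X^N)-H(\widehat X^N)}\le 2\|H\|_\infty^\alpha[H]_{Lip}^{1-\alpha}|X-\widehat X^N|^{1-\alpha}$, so $|R(X,\widehat X^N)|\le C_\beta|X-\widehat X^N|^{2+\beta}$ with $\beta=1-\alpha\in(0,1)$. Since $|X-\widehat X^N|^2 = \sum_k |X_k-\widehat X_k^{N_k}|^2$, one bounds $\E[|X-\widehat X^N|^{2+\beta}]$ by (a constant times) $\sum_k \E[|X_k-\widehat X_k^{N_k}|^{2+\beta}]$, and each summand is $O(N_k^{-(2+\beta)})$ by the one-dimensional $L^r$-$L^s$ distortion mismatch Theorem \ref{EB:LrLsdistortionmismatch} with $r=2$, $s=2+\beta$ (under the implicit integrability hypothesis on each $X_k$). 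Hence $\E[R(X,\widehat X^N)] = O\big((\min_k N_k)^{-(2+\beta)}\big)$, which is the claimed error term.

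The main obstacle is the bookkeeping around the mixed dependence of the Hessian entries $\partial^2_{k\ell}f(\widehat X^N)$ on all components at once: one cannot directly apply the one-dimensional Theorems \ref{EB:distortimesf} and \ref{EB:LrLsdistortionmismatch} because the "weight" $g(\widehat X^N)$ is not a function of a single component. The conditioning argument (freeze all components but one, apply the scalar result to that slice, then take expectation over the frozen ones) resolves this, but it requires verifying that the hypotheses of those theorems hold for the conditional one-dimensional problems uniformly enough for dominated convergence to apply when averaging — this is the technical heart of the proof.
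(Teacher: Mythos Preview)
Your proposal is correct and follows essentially the same approach as the paper: second-order Taylor expansion, stationarity to kill the gradient term, splitting the Hessian quadratic form into diagonal and off-diagonal pieces, conditioning to reduce the diagonal pieces to one-dimensional applications of Theorem~\ref{EB:distortimesf}, and the same interpolation-plus-mismatch argument for the remainder. The only cosmetic difference is that for the off-diagonal terms you condition on $\widehat X^N$ and use conditional independence, whereas the paper conditions on $X_\ell$ directly; both arguments are valid and equivalent in spirit. Your closing remark about needing dominated convergence when averaging the sliced limits is apt --- the paper glosses over this point.
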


\begin{proof}
	If $f$ is twice differentiable, hence we have the following Taylor's expansion
	\begin{equation*}
		\begin{aligned}
			f(x)
			 & = f(a) + \nabla f (a) (x-a) + \frac{1}{2} H (a) \cdot (x-a)^{\otimes 2}                             \\
			 & \qquad \qquad + \int_{0}^{1} (1-t) \big( H (t x + (1-t) a) - H (a) \big) \cdot (x-a)^{\otimes 2} dt
		\end{aligned}
	\end{equation*}
	where the notation $f(x,a) \cdot (x-a)^{\otimes 2}$ stands for $(x-a)^{T} f(x,a) (x-a)$.
	Replacing $x$ and $a$ by $X$ and $\widehat X^N$ respectively and taking the expectation
	\begin{equation*}
		\begin{aligned}
			\E \big[ f(X) \big]
			 & = \E \big[ f(\widehat X^N) \big] + \E \big[ \nabla f (\widehat X^N) (X-\widehat X^N) \big] + \frac{1}{2} \E \big[ H (\widehat X^N) \cdot (X-\widehat X^N)^{\otimes 2} \big] \\
			 & \qquad \qquad + \int_{0}^{1} (1-t) \E \Big[ \big( H (t X + (1-t) \widehat X^N) - H (\widehat X^N) \big) \cdot (X-\widehat X^N)^{\otimes 2} \Big] dt.
		\end{aligned}
	\end{equation*}
	Noticing that, by Corollary \ref{EB:conditionalStationarity},
	\begin{equation*}
		\begin{aligned}
			\E \big[ \nabla f (\widehat X^N) (X-\widehat X^N) \big]
			 & = \sum_{k=1}^{d} \E \bigg[ \frac{\partial f}{\partial x_k} (\widehat X^N) (X_k-\widehat X^{N_k}_k) \bigg]                                       \\
			 & = \sum_{k=1}^{d} \E \Bigg[ \E \bigg[ \frac{\partial f}{\partial x_k} (\widehat X^N) (X_k-\widehat X^{N_k}_k) \mid \widehat X_{-k} \bigg] \Bigg] \\
			 & = 0.
		\end{aligned}
	\end{equation*}
	where $\widehat X_{-k}$ denotes $(\widehat X_1^{N_1}, \dots, \widehat X_{k-1}^{N_{k-1}}, \widehat X_{k+1}^{N_{k+1}}, \dots, \widehat X_d^{N_d} )$. Hence
	\begin{equation}\label{EB:step4}
		\begin{aligned}
			\E \big[ f(X) \big]
			 & = \E \big[ f(\widehat X^N) \big] + \frac{1}{2} \E \big[ H (\widehat X^N) \cdot (X-\widehat X^N)^{\otimes 2} \big]                                   \\
			 & \qquad \qquad + \int_{0}^{1} (1-t) \E \Big[ \big( H (t X + (1-t) \widehat X^N) - H (\widehat X^N) \big) \cdot (X-\widehat X^N)^{\otimes 2} \Big] dt
		\end{aligned}
	\end{equation}
	and looking at the second term in \eqref{EB:step4}
	\begin{equation*}
		\begin{aligned}
			\E & \big[ H (\widehat X^N) \cdot (X-\widehat X^N)^{\otimes 2} \big]                                                                                                                                                                                                                      \\
			   & \qquad = \sum_{k=1}^{d} \E \bigg[ \frac{\partial^2 f}{\partial x_k^2} (\widehat X^N) \vert X_k-\widehat X^{N_k}_k \vert^2 \bigg] + 2 \sum_{k \neq l} \E \bigg[ \frac{\partial^2 f}{\partial x_k \partial x_l} (\widehat X^N) (X_k-\widehat X^{N_k}_k)(X_l-\widehat X^{N_l}_l) \bigg] \\
			   & \qquad = \sum_{k=1}^{d} \E \Bigg[ \E \bigg[\frac{\partial^2 f}{\partial x_k^2} (\widehat X^N) \vert X_k-\widehat X^{N_k}_k \vert^2 \mid \widehat X_{-k} \bigg] \Bigg]                                                                                                                \\
			   & \qquad \qquad \qquad + 2 \sum_{k \neq l} \E \Bigg[ \underbrace{\E \bigg[ \frac{\partial^2 f}{\partial x_k \partial x_l} (\widehat X^N) (X_k-\widehat X^{N_k}_k) \mid X_l \bigg]}_{=0} (X_l-\widehat X^{N_l}_l) \Bigg]                                                                \\
			   & \qquad = \sum_{k=1}^{d} \E \Bigg[ \E \bigg[\frac{\partial^2 f}{\partial x_k^2} (\widehat X^N) \vert X_k-\widehat X^{N_k}_k \vert^2 \mid \widehat X_{-k} \bigg] \Bigg]                                                                                                                \\
			   & \qquad = \sum_{k=1}^{d} \E \Bigg[ \E \bigg[ \frac{\partial^2 f}{\partial x_k^2} (x_1, \dots,x_{k-1}, \widehat X^{N_k}_k, x_{k+1}, \dots, x_d) \vert X_k-\widehat X^{N_k}_k \vert^2 \bigg] \bigg\vert_{\widehat X_{-k} = x_{-k}} \Bigg]                                               \\
			   & \qquad = \sum_{k=1}^{d} \E \Big[ \E \big[ g_{k, x_{-k}} (\widehat X^{N_k}_k) \vert X_k-\widehat X^{N_k}_k \vert^2 \big] \big\vert_{\widehat X_{-k} = x_{-k}} \Big].
		\end{aligned}
	\end{equation*}
	Now, using Theorem \ref{EB:distortimesf}, we have the following limits, for each $k$
	\begin{equation*}
		\lim_{N_k \rightarrow + \infty} N_k^2 \E \big[ g_{k, x_{-k}} (\widehat X^{N_k}_k) \vert X_k-\widehat X^{N_k}_k \vert^2 \big] = \mathcal{Q}_2(\Prob_{_{X_k}}) \int g_{k, x_{-k}} (\xi) \Prob_{_{X}} ( d \xi ).
	\end{equation*}
	Giving us the first part of the desired result
	\begin{equation*}
		\E \big[ f(X) \big] = \E \big[ f(\widehat X^{N}) \big] + \sum_{k=1}^{d} \frac{c_k}{N_k^2} + \int_{0}^{1} (1-t) \E \Big[ \big( H (t X + (1-t) \widehat X^N) - H (\widehat X^N) \big) \cdot (X-\widehat X^N)^{\otimes 2} \Big] dt
	\end{equation*}
	with $c_k := \frac{1}{2} \mathcal{Q}_2(\Prob_{_{X_k}}) \int \int g_{k, x_{-k}} (x) \Prob_{_{X_k}}(dx) \Prob_{_{X_{-k}}}(dy)$. Now, we take care of the integral part, we proceed using the same methodology as in the one dimensional case, using the hypothesis on the Hessian
	\begin{equation*}
		\E \Big[ \big\vert \big( H (t X + (1-t) \widehat X^N) - H (\widehat X^N) \big) \cdot (X-\widehat X^N)^{\otimes 2} \big\vert \Big] \leq 2 t^{\beta} [H]_{_{Lip}}^{\beta} \vertiii{H}^{1-\beta}_{_{\infty}} \E \big[ \vert X-\widehat X^N \vert^{2 + \beta} \big]
	\end{equation*}
	with $\beta \in (0,1)$ and $\vertiii{H}_{_{\infty}} := \sup_{x \in \R^d} \vertiii{H(x)}$. Hence
	\begin{equation*}
		\begin{aligned}
			\int_{0}^{1} (1-t) & \E \Big[ \big( H (t X + (1-t) \widehat X^N) - H (\widehat X^N) \big) \cdot (X-\widehat X^N)^{\otimes 2} \Big] dt \\
			                   & \leq \frac{1}{(2+\beta) (1+\beta)} C_{H,X} \E \big[ \vert X-\widehat X^N \vert^{2 + \beta} \big].
		\end{aligned}
	\end{equation*}
	Using now Theorem \ref{EB:LrLsdistortionmismatch}, let $s = 2+\beta$, we have the desired result: $\E \big[ \vert X_k - \widehat X^{N_k}_k \vert^{2 + \beta} \big] = O (N_k^{-(2+\beta)} )$ and finally
	\begin{equation*}
		\E \big[ f(X) \big] = \E \big[ f(\widehat X^{N}) \big] + \sum_{k=1}^{d} \frac{c_k}{N_k^2} + O \bigg( \Big( \min_{k=1:d} N_k \Big)^{-(2+\beta)} \bigg),
	\end{equation*}
	for every $\beta \in (0,1)$.
	If moreover, the densities $\varphi_k$ of $X_k$, for all $k=1, \dots, k$, are Lipschitz continuous, bounded away from $0$ on $[a,b]$ then we can take $\beta = 1$.

\end{proof}

\begin{remark}
	Even-though, we could be interested by considering non-independent components $(X_k)_{k=1, \dots, d}$, the independence hypothesis on the components is necessary in the proof because we proceed component by component. For example the first order term of the expansion would not be null by stationarity if the components are not independent.
\end{remark}

\section{Applications}

\subsection{Quantized Control Variates in Monte Carlo simulations}

Let $Z \in L^2(\Prob)$ be a random vector with components $(Z_k)_{k=1, \dots, d}$, we assume that we have a closed-form for $\E [ Z_k ]$, $k=1,\ldots,d$, and $f : \R^d \rightarrow \R$ our function of interest. We are interested in the quantity
\begin{equation}\label{EB:expectationofinterest}
	I := \E \big[ f (Z) \big].
\end{equation}
The standard method for approximating \eqref{EB:expectationofinterest} if we are able to simulate independent copies of $Z$ is to devise a Monte Carlo estimator. In this part, we present a reduction variance method based on quantized control variates. Let $\Xi_N$ our $d$ dimensional control variate
\begin{equation*}
	\Xi^{N} := (\Xi_k^{N} )_{k = 1, \dots, d}
\end{equation*}
where each component $\Xi_k^N$ is defined by
\begin{equation*}
	\Xi_k^N := f_k(Z_k) - \E \big[ f_k(\widehat{Z}^N_k) \big],
\end{equation*}
with $ f_k(z) := f ( \E [ Z_1 ], \dots, \E [ Z_{k-1} ], z, \E [ Z_{k+1} ], \dots, \E [ Z_d ] ) $ and $ \widehat{Z}_k^N $ is an optimal quantizer of cardinality $ N $ of the component $ Z_k $. One notices that the complexity for the evaluation of $ f_k $ is the same as the one of $ f $. Now, defining $ X^{\lambda} := f(Z) - \langle \lambda, \Xi^N \rangle $ where $ \lambda \in \R^d $, we can introduce $ I^{\lambda, N} $ as an approximation for \eqref{EB:expectationofinterest}
\begin{equation}\label{EB:approxbbiased}
	\begin{aligned}
		I^{\lambda, N}
		 & := \E \big[ X^{\lambda} \big]                                                                                              \\
		 & = \E \big[ f(Z) - \langle \lambda, \Xi^{N} \rangle \big]                                                                   \\
		 & = \E \Bigg[ f(Z) - \sum_{k=1}^{d} \lambda_k f_k(Z_k) \Bigg] + \sum_{k=1}^{d} \lambda_k \E \big[ f_k(\widehat Z_k^N) \big].
	\end{aligned}
\end{equation}
The terms $\E \big[ f_k(\widehat{Z}^N_k) \big]$ in \eqref{EB:approxbbiased} can be computed easily using the quantization-based cubature formula if we known the grids of the quantizers $(\widehat{Z}_k^N)_{k = 1, \dots, d}$ and their associated weights.
\begin{remark}
	We look for the $\lambda_{\min}$ minimizing the variance of $X^{\lambda}$
	\begin{equation*}
		\V (X^{\lambda_{\min}}) = \min \big\{ \V \big( f(Z) - \langle \lambda, \Xi^{N} \rangle \big), \lambda \in \R^d \big\}.
	\end{equation*}
	The solution of the above optimization problem is the solution of following system
	\begin{equation*}
		D (Z) \cdot \lambda = B
	\end{equation*}
	where $D(Z)$, the covariance-variance matrix of $\big(f_{k}(Z_k) \big)_{k = 1, \dots, d}$, and $B$ are given by
	\begin{equation*}
		D (Z) =
		\begin{pmatrix}
			\V \big(f_1(Z_1) \big)             & \cdots & \Cov \big(f_1(Z_1), f_d(Z_d) \big) \\
			\vdots                             & \ddots & \vdots                             \\
			\Cov \big(f_d(Z_d), f_1(Z_1) \big) & \cdots & \V \big(f_d(Z_d) \big)             \\
		\end{pmatrix}
		, \quad
		B =
		\begin{pmatrix}
			\Cov \big(f(Z), f_1(Z_1) \big) \\
			\vdots                         \\
			\Cov \big(f(Z), f_d(Z_d) \big) \\
		\end{pmatrix}
		.
	\end{equation*}
	The solution to this optimization problem can easily be solved numerically using any library of linear algebra able to solve linear systems thanks to QR or LU decompositions.
\end{remark}
\begin{remark}
	If the $Z_k$'s are independent hence $\lambda$ can be determined easily. Indeed, in that case the matrix $D(Z)$ is diagonal. Then, the $\lambda_{k}$'s are given by
	\begin{equation*}
		\lambda_k = \frac{\Cov \big( f_k(Z_k), f(Z) \big)}{\V \big( f_k(Z_k) \big)}.
	\end{equation*}
\end{remark}

Now, we can define $\widehat{I}^{\lambda, N}_{_{M}}$ the associated Monte Carlo estimator of $I^{\lambda, N}$
\begin{equation*}
	\widehat{I}^{\lambda, N}_{_{M}} = \frac{1}{M} \sum_{m=1}^M \Bigg( f(Z^m) - \sum_{k=1}^{d} \lambda_k f_k(Z_k^m) \Bigg) + \sum_{k=1}^{d} \lambda_k \E \big[ f_k(\widehat{Z}^N_k) \big].
\end{equation*}
One notices that $\E \big[ I - I^{\lambda, N} \big] \neq 0$, with bias equal to $\sum_{k=1}^{d} \lambda_k \big( \E \big[ f_k(\widehat{Z}^N_k) \big] - \E \big[ f_k(Z_k) \big] \big) $. However the quantity we are really interested by is not the bias but the \textit{MSE} (Mean Squared Error), yielding a \textit{bias-variance decomposition}
\begin{equation*}
	\textrm{MSE} (\widehat{I}^{\lambda, N}_{_{M}}) = \underbrace{ \Bigg( \sum_{k=1}^{d} \lambda_k \Big( \E \big[ f_k(\widehat{Z}^N_k) \big] - \E \big[ f_k(Z_k) \big] \Big) \Bigg)^{2}}_{\textit{bias}^2} + \frac{1}{M} \underbrace{ \V \Bigg( f(Z) - \displaystyle \sum_{k=1}^{d} \lambda_k f_k(Z_k) \Bigg) }_{\textit{Monte Carlo variance}}.
\end{equation*}
Our aim is to minimize the cost of the Monte Carlo simulation for a given \textit{MSE} or upper-bound of the \textit{MSE}. Consequently, for a given Monte Carlo estimator $\widehat{I}^{\lambda, N}_{_{M}}$ our minimization problem reads
\begin{equation}\label{EB:minimizationproblemMC}
	\inf_{MSE(\widehat{I}^{\lambda, N}_{_{M}}) \leq \epsilon^2} \textrm{Cost} (\widehat{I}^{\lambda, N}_{_{M}}).
\end{equation}
Let $\kappa = \textrm{Cost}(f(z))$ for a given $z \in \R^d$, the cost of a standard Monte Carlo estimator $\widehat{I}_{_{M}}$ of size $M$ is $\textrm{Cost} (\widehat{I}_{_{M}}) = \kappa M$. In our controlled case, if we neglect the cost for building an optimal quantizer, the global complexity associated to the Monte-Carlo estimator $\widehat{I}^{\lambda, N}_{_{M}}$ is given by
\begin{equation*}
	\textrm{Cost} ( \widehat{I}^{\lambda, N}_{_{M}} ) = \kappa \big( (d+1) M + d N \big)
\end{equation*}
where the cost of the computation of $f(z) - \lambda \sum_{k=1}^{d} f_k(z)$ is upper-bounded by $(d+1)\kappa$ whereas $\kappa d N$ is the cost of the quantized part. Indeed, there is $d$ expectations of functions of $N$-quantizers to compute, inducing a cost of order $\kappa d N$. Some optimizations can be implemented when computing $f_k(z)$, in that case $\textrm{Cost}(f_k(z)) < \kappa$. So, \eqref{EB:minimizationproblemMC} becomes
\begin{equation*}
	\inf_{MSE( \widehat{I}^{\lambda, N}_{_{M}} ) \leq \epsilon^2} \kappa \big( (d+1) M + d N \big).
\end{equation*}
Moreover, using the results in the first part of the paper concerning the weak error, we could define an upper-bound for the $MSE( \widehat{I}^{\lambda, N}_{_{M}})$, indeed if each $f_k$ is in a class of function where the weak error of order two is attained when using a quantization-based cubature formula then
\begin{equation*}
	MSE( \widehat{I}^{\lambda, N}_{_{M}} ) = \Bigg( \sum_{k=1}^{d} \lambda_k \Big( \E \big[ f_k(\widehat{Z}^N_k) \big] - \E \big[ f_k(Z_k) \big] \Big) \Bigg)^{2} + \frac{\sigma_{\lambda}^2}{M} \leq \frac{C}{N^4} + \frac{\sigma_{\lambda}^2}{M} \\
\end{equation*}
with $\sigma_{\lambda}^2 := \V \big( f(Z) - \sum_{k=1}^{d} \lambda_k f_k(Z_k) \big)$. Now, our minimization problem becomes
\begin{equation*}
	\inf_{ \frac{C}{N^4} + \frac{\sigma_{\lambda}^2}{M} \leq \epsilon^2} \kappa \big( (d+1) M + d N \big).
\end{equation*}
$\frac{C}{N^4}$ corresponds to the squared empirical bias and $\frac{\sigma_{\lambda}^2}{M}$ to the empirical variance, hence a standard approach when dealing with this kind of problem, is to equally divide $\epsilon^2$ between the bias and the variance: $\frac{C}{N^4} = \frac{\epsilon^2}{2}$ and $\frac{\sigma_{\lambda}^2}{M} = \frac{\epsilon^2}{2}$ yielding
\begin{equation*}
	N = O(\epsilon^{-\frac{1}{2} }) \quad \textrm{ and } \quad M = O(\epsilon^{-2} ),
\end{equation*}
hence the cost would be of order $O(\epsilon^{-2} )$. However, as the cost is additive and in the case where $\sigma_{\lambda}^2$ is close to $\V \big( f(Z) \big)$, meaning that the control variate does not really reduce the variance, we want to reduce the bias as much as we can. So another idea could be to choose both terms $M$ and $N$ of order $O(\epsilon^{-2} )$, because the impact on the cost of the Monte Carlo is at least of this order. Then, we search $\theta \in (0,1)$ defined by
\begin{equation*}
	\theta \epsilon^2 = \frac{C}{N^4} \quad \textrm{ and } (1 - \theta) \epsilon^2 = \frac{\sigma_{\lambda}^2}{M},
\end{equation*}
such that the impact on the cost of the Monte Carlo part and the quantization part are of same order: $O(\epsilon^{-2})$. In that case, $\theta$ is given by
\begin{equation*}
	\left\{ \begin{matrix}
		\theta \epsilon^2 = \frac{C}{N^4} \\
		\kappa d N  = O(\epsilon^{-2})
	\end{matrix} \right.
	\qquad \Longrightarrow \qquad \theta = O(\epsilon^{6}).
\end{equation*}
In practice, we do take not that high value for $N$. Indeed, the bias converges to $0$ as $N^{-4}$, so taking optimal quantizers of size $200$ or $500$ is enough for considering that the bias is negligible compared to the residual variance of the Monte Carlo estimator.

\begin{remark}
	Now, if we consider that we have no closed-form for $\E [ Z_k ], \,k=1,\ldots,d$, then we need to approximate them by $m_k$ (this would impact the total cost of the method, as one would need to use a numerical method for computing the $m_k$'s but this can be done once and for all before estimating $\widehat{I}^{\lambda, N}_{_{M}}$). These approximations yield different control variates: the functions $\widetilde f_k(z) := f ( m_1, \dots, m_{k-1}, z, m_{k+1},\dots, m_d ) $, inducing a different \textit{MSE}
	\begin{equation*}
		MSE( \widehat{I}^{\widetilde \lambda, N}_{_{M}} ) = \Bigg( \sum_{k=1}^{d} \widetilde \lambda_k \Big( \E \big[ \widetilde f_k(\widehat{Z}^N_k) \big] - \E \big[ \widetilde f_k(Z_k) \big] \Big) \Bigg)^{2} + \frac{\widetilde \sigma_{\widetilde \lambda}^2}{M}
	\end{equation*}
	with $\widetilde \sigma_{\widetilde \lambda}^2 := \V \big( f(Z) - \sum_{k=1}^{d} \widetilde \lambda_k \widetilde f_k(Z_k) \big)$ and $\widetilde \lambda_k, \,k=1,\ldots,d$. Finally, we can conclude in the same way as before if the $\widetilde f_k$'s are in a class of function where the weak error of order two is attained when using a quantization-based cubature formula.
\end{remark}

\subsection{Numerical results}

Let $(S_t)_{t\in [0,T]}$ be a geometric Brownian motion representing the dynamic of a \textit{Black-Scholes} asset between time $t=0$ and time $t=T$ defined by
\begin{equation*}
	S_t = S_0 \e^{ ( r - \sigma^2/2 ) t + \sigma W_t }
\end{equation*}
with $(W_t)_{t \in [0, T]}$ a standard Brownian motion defined on a probability space $(\Omega, \A, \Prob)$, $r$ the interest rate and $\sigma$ the volatility. When considering to use optimal quantization with a Black-Scholes asset, we have two possibilities: either we take an optimal quantizer of a normal distribution as $W_T \sim \N (0, T )$ or we build an optimal quantizer of a log-normal distribution as $\log (\e^{ ( r - \sigma^2/2 ) T + \sigma W_T } ) \sim \N \big( ( r - \sigma^2/2 ) T, \sigma^{2} T \big)$. In this part we consider both approaches since each one has its benefits and drawbacks.

Optimal Quantizers of log-normal random variables need to be computed each time we consider different parameters for the Black-Scholes asset. Indeed, the only operations preserving the optimality of the quantizers are translations and scaling. However, this transformations are not enough if one wishes to build an optimal quantizer of a Log-Normal random variables with parameters $\mu$ and $\sigma$ from an optimal quantizer of a standardized Log-Normal random variable. However, if one looses time by computing for each set of parameters an optimal quantizer for the log-normal random variable, it gains in precision.

Now, if we consider the case of optimal quantizers of normal random variables, we loose in precision because we do not quantize directly our asset but the optimal quantizers of normal random variables can be computed once and for all and stored on a file. Indeed, we can build every normal random variable from a standard normal random variable using translations and scaling. Moreover, high precision grids of the $\N(0,1)$-distribution are in free access for download at  the website: \url{www.quantize.maths-fi.com}.

Substantial details concerning the optimization problem and the numerical methods for building quadratic optimal quantizers can be found in \cite{pages2018numerical,pages2003optimal,pages2004optimal,mcwalter2018recursive}. In our case, we chose to build all the optimal quantizers with the Newton-Raphson algorithm (see \cite{pages2003optimal} for more details on the gradient and Hessian formulas for the $\N(0,1)$-distribution and \cite{mcwalter2018recursive} for other distributions) modified with the Levenberg-Marquardt procedure which improves the robustness of the method.

\subsubsection{Vanilla Call}
The payoff of a Call expiring at time $T$ is
\begin{equation*}
	( S_T - K )_+
\end{equation*}
with $K$ the strike and $T$ the maturity of the option. Its price, in the special case of \textit{Black-Scholes} model, is given by the following closed formula
\begin{equation}\label{EB:price_call}
	I_0 := \E \big[ \e^{-r T} ( S_T - K )_+ \big] = Call_{_{BS}}(S_0, K, r, \sigma, T) = S_0 \N (d_1) - K \e^{-r T} \N (d_2)
\end{equation}
where $\N(x)$ is the cumulative distribution function of the standard normal distribution, $d_1 := \frac{\log( S_0/K ) + (r + \sigma^2/2) T}{\sigma \sqrt{T}}$ and $d_2 := d_1 - \sigma \sqrt{T}$. Although the price of a Call in the Black-Scholes model can be expressed in a closed form, it is a good exercise to test new numerical methods against this benchmark. We compare the use of optimal quantizers of normal distribution, when one quantizes the law of the Brownian motion at time $T$ and log-normal distribution when one quantizes directly the law of the asset $S_T$ at time $T$.

In the first case, we can rewrite $I_0$ as a function of a random variable $Z$ with a $\N (0,1)$-distribution, namely a normal distributed random variable,
\begin{equation*}
	\E \big[ \e^{-r T} ( S_T - K )_+ \big] = \E \big[ f(Z) \big]
\end{equation*}
where $f(x) := \e^{-r T} ( s_0 \e^{(r-\sigma^2/2)T + \sigma \sqrt{T} x} - K )_+ $ is continuous with a piecewise-defined locally-Lipschitz derivative, with respect to the function $g(x) = \e^{ \sigma \sqrt{T} \vert x \vert}$.

In the second case, we have
\begin{equation*}
	\E \big[ \e^{-r T} ( S_T - K )_+ \big] = \E \big[ \varphi (S_T) \big]
\end{equation*}
where $\varphi (x) := \e^{-r T} ( x - K )_+ $ is piecewise affine with one break of affinity.

The Black-Scholes parameters considered are
\begin{equation*}
	s_0 = 100, \qquad r = 0.1, \qquad \sigma = 0.5,
\end{equation*}
whereas those of the Call option are $ T = 1 $ and $ K=80 $. The reference value is 34.15007. The first graphic in the Figure \ref{EB:fig:call} represents the weak error between the benchmark and the quantization-based approximations in function of the size of the grid: $ N \longmapsto \big\vert I_0 - \E \big[ f ( \widehat Z^N ) \big] \big\vert $ and $ N \longmapsto \big\vert I_0 - \E \big[ \varphi ( \widehat X^N ) \big] \big\vert $, the second represents the weak error multiplied by $ N^2 $ in function of $ N $: $ N \longmapsto N^2 \times \big\vert I_0 - \E \big[ f ( \widehat Z^N ) \big] \big\vert $ and $ N \longmapsto N^2 \times \big\vert I_0 - \E \big[ \varphi ( \widehat X^N ) \big] \big\vert $.
\begin{figure}[H]
	\centering
	\begin{subfigure}[t]{0.49\textwidth}
		\centering
		\includegraphics[width=1.\textwidth]{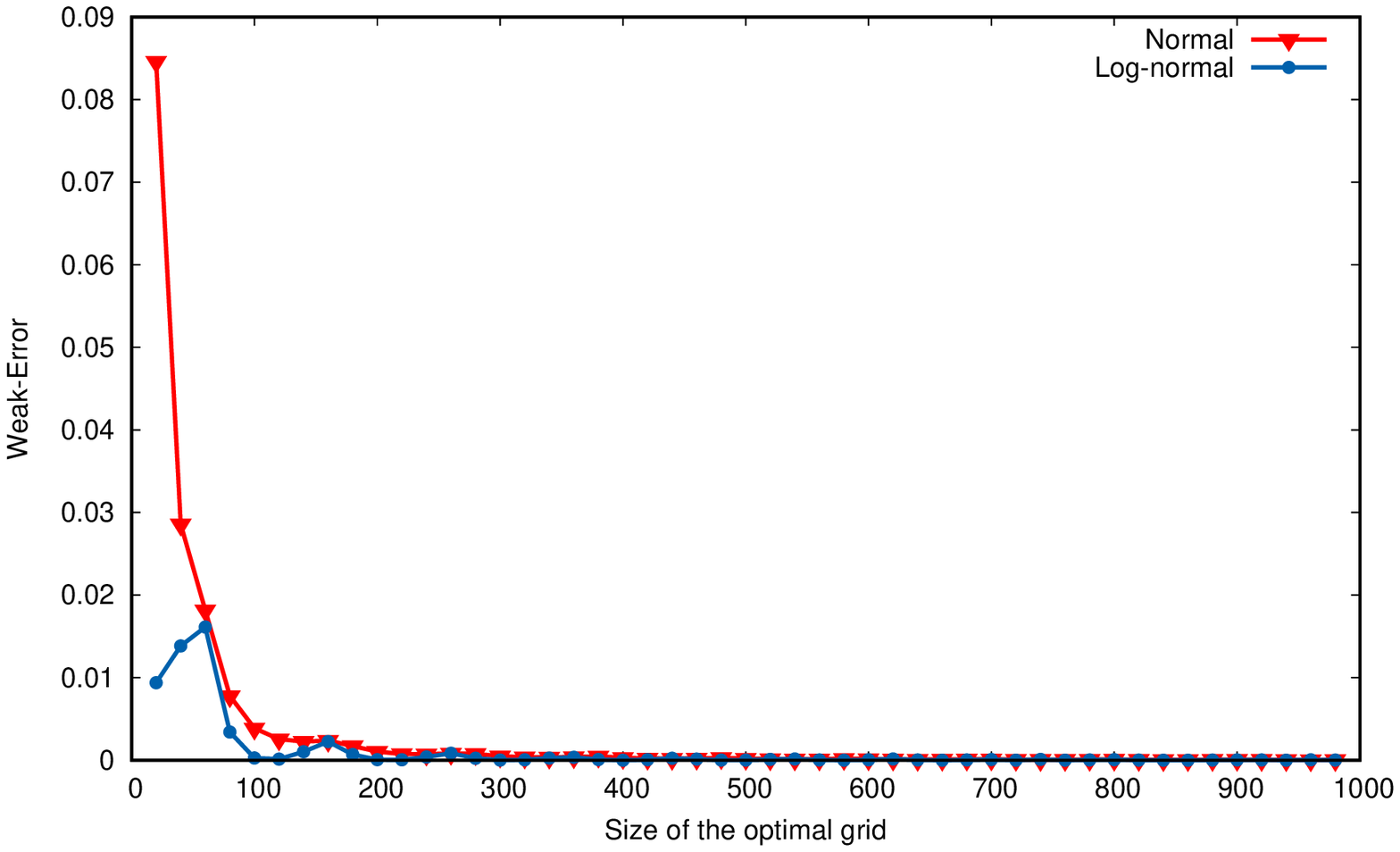}
		\caption{\textit{$N \longmapsto \big\vert I_0 - \E \big[ f ( \widehat Z^N ) \big] \big\vert$ (\textcolor{red}{\ding{116}}) and \\ $N \longmapsto \big\vert I_0 - \E \big[ \varphi ( \widehat X^N ) \big] \big\vert$ (\textcolor{NavyBlue}{\ding{108}}) }}
	\end{subfigure}%
	~
	\begin{subfigure}[t]{0.49\textwidth}
		\centering
		\includegraphics[width=1.\textwidth]{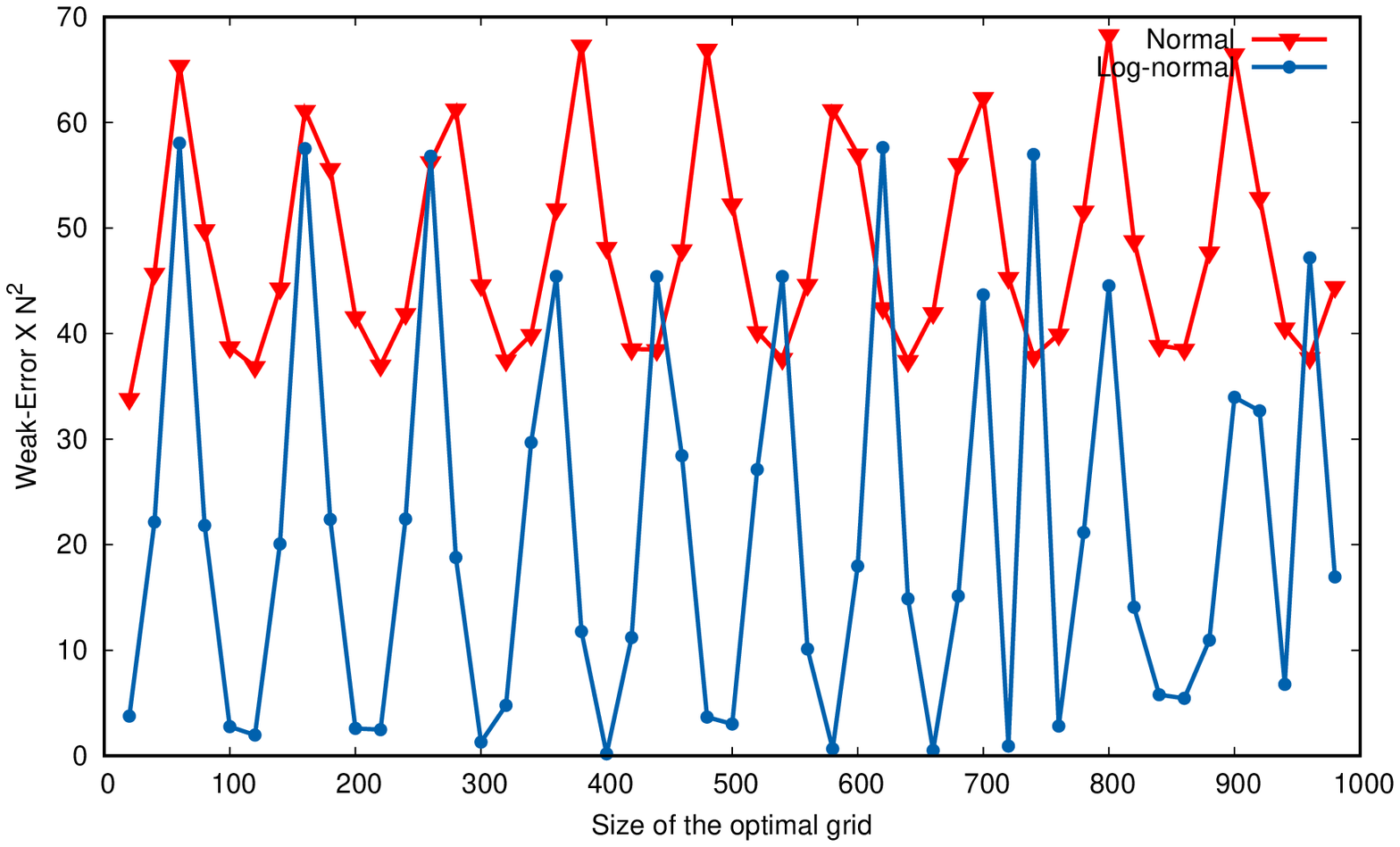}
		\caption{\textit{$N \longmapsto N^2 \times \big\vert I_0 - \E \big[ f ( \widehat Z^N ) \big] \big\vert$ (\textcolor{red}{\ding{116}}) and \\
				$N \longmapsto N^2 \times \big\vert I_0 - \E \big[ \varphi ( \widehat X^N ) \big] \big\vert$ (\textcolor{NavyBlue}{\ding{108}})}}
	\end{subfigure}
	\caption[Pricing of a Call option in a Black-Scholes model with optimal quantization.]{\textit{Call option in a Black-Scholes model.}}
	\label{EB:fig:call}
\end{figure}
First, we notice that both methods yield a weak-error of order $2$, as desired.
Second, if we look closely at the results the log-normal grids give a more precise price. However we need to build a specific grid each time we have a new set of parameters for the asset, whereas such is not the case when we choose to quantize the normal random variable, we can directly read precomputed grids with their associated weights in files.

\subsubsection{Compound Option}
The second product we consider is a Compound Option: a Put-on-Call. The payoff of a Put-on-Call expiring at time $T_1$ is the following
\begin{equation*}
	\Big( K_1 - \E \big[ \e^{-r(T_2-T_1)} (S_{T_2} - K_2)_+ \mid S_{T_1} \big] \Big)_+
\end{equation*}
with price
\begin{equation}\label{EB:priceputoncall}
	I_0 := \E \bigg[ \e^{-r T_1} \Big( K_1 - \E \big[ \e^{-r(T_2-T_1)} (S_{T_2} - K_2)_+ \mid S_{T_1} \big] \Big)_+ \bigg].
\end{equation}
The inner expectation can be computed, using the fact that $S_{T_2}$ is a \textit{Black-Scholes} asset and we know the conditional law of $S_{T_2}$ given $S_{T_1}$. Using \eqref{EB:price_call}, the value of the inner expectation is
\begin{equation*}
	\E \big[ \e^{-r(T_2-T_1)} (S_{T_2} - K_2)_+ \mid S_{T_1} \big] = Call_{_{BS}}(S_{T_1}, K_2, r, \sigma, T_2 - T_1).
\end{equation*}
Hence, the price of the Put-On-Call option in \eqref{EB:priceputoncall} can be rewritten as
\begin{equation*}
	I_0 = \E \Big[ \e^{-r T_1} \big( K_1 - Call_{_{BS}}(S_{T_1}, K_2, r, \sigma, T_2 - T_1) \big)_+ \Big].
\end{equation*}
The Black-Scholes parameters considered are
\begin{equation*}
	s_0 = 100, \qquad r = 0.03, \qquad \sigma = 0.2,
\end{equation*}
whereas those of the Put-On-Call option are $T_1 = 1/12$, $T_2 = 1/2$, $K_1=6.5$ and $K_2=100$. The reference value, obtained using an optimal quantizer of size $10000$ of the $ \N (0,1) $-distribution, is 1.3945704. As in the vanilla case, we compare the use of optimal quantizers of normal distribution and log-normal distribution. In the first case, we have
\begin{equation*}
	I_0 = \E \big[ f(Z) \big]
\end{equation*}
where $Z \sim \N (0,1)$ and $f(z) = \e^{-r T_1} \big( K_1 - Call_{_{BS}}(s_0 \e^{ (r -\sigma^2/2) T_1 + \sigma \sqrt{T_1} z}, K_2, r, \sigma, T_2 - T_1) \big)_+$, and in the second case
\begin{equation*}
	I_0 = \E \big[ \varphi (X) \big]
\end{equation*}
where $\log (X) \sim \N ( (r - \sigma^2/2) T, \sigma \sqrt{T} )$ and $\varphi(x) = \e^{-r T_1} \big( K_1 - Call_{_{BS}} (s_0 x, K_2, r, \sigma, T_2 - T_1) \big)_+ $.
The first graphic in Figure \ref{EB:fig:putoncall} represents the weak error between the benchmark and the quantization-based approximations in function of the size of the grid: $N \longmapsto \big\vert I_0 - \E \big[ f ( \widehat Z^N ) \big] \big\vert$ and $N \longmapsto \big\vert I_0 - \E \big[ \varphi ( \widehat X^N ) \big] \big\vert$, the second allows us to observe if the rate of convergence is indeed of order $2$.
\begin{figure}[H]
	\centering
	\begin{subfigure}[t]{0.49\textwidth}
		\centering
		\includegraphics[width=1.\textwidth]{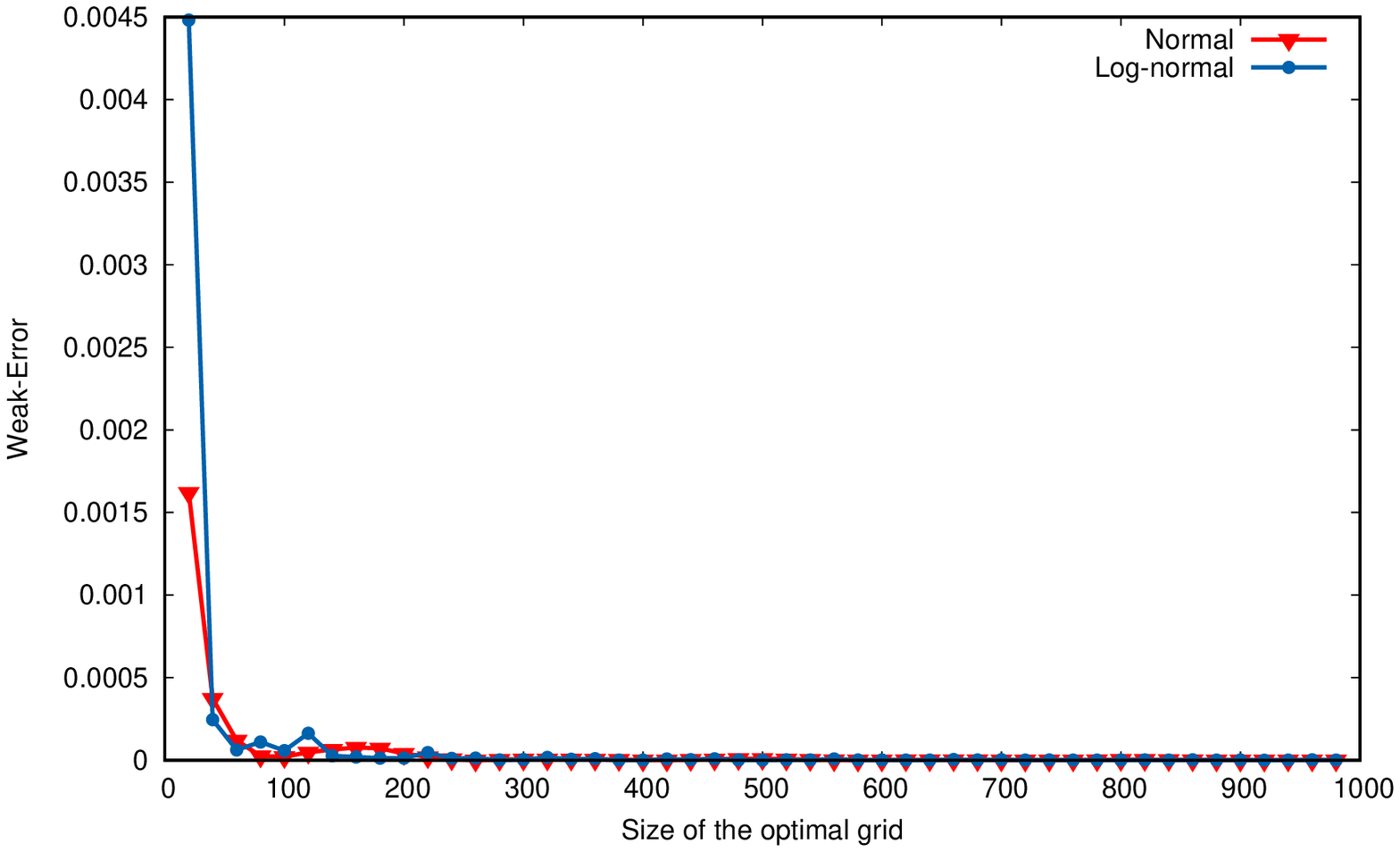}
		\caption{\textit{$N \longmapsto\big\vert I_0 - \E \big[ f ( \widehat Z^N ) \big] \big\vert$ (\textcolor{red}{\ding{116}}) and \\
				$N \longmapsto\big\vert I_0 - \E \big[ \varphi ( \widehat X^N ) \big] \big\vert$ (\textcolor{NavyBlue}{\ding{108}}) }}
	\end{subfigure}%
	~
	\begin{subfigure}[t]{0.49\textwidth}
		\centering
		\includegraphics[width=1.\textwidth]{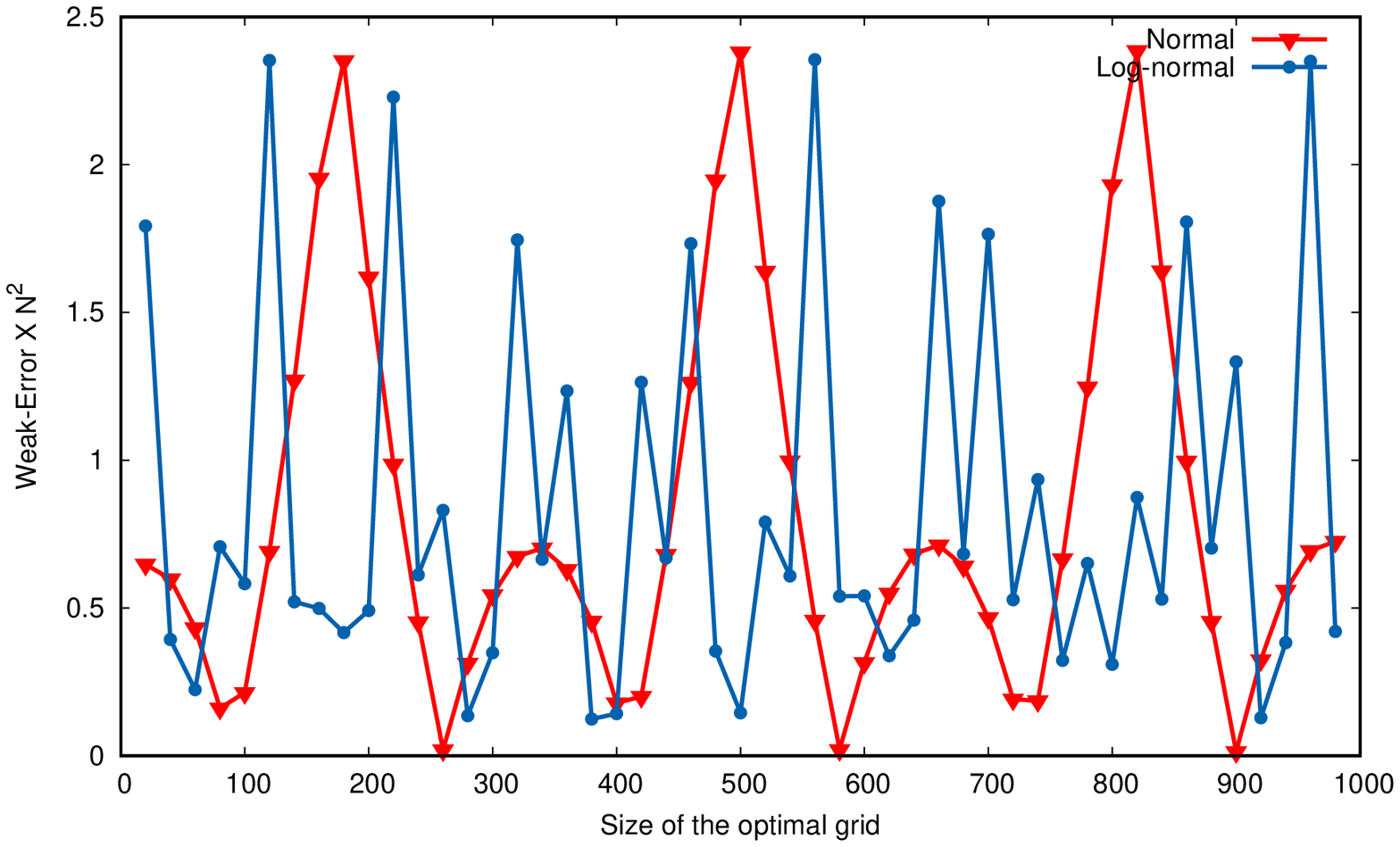}
		\caption{\textit{$N \longmapsto N^2 \times \big\vert I_0 - \E \big[ f ( \widehat Z^N ) \big] \big\vert$ (\textcolor{red}{\ding{116}}) and \\
				$N \longmapsto N^2 \times \big\vert I_0 - \E \big[ \varphi ( \widehat X^N ) \big] \big\vert$ (\textcolor{NavyBlue}{\ding{108}})}}
	\end{subfigure}
	\caption[Pricing of a Put-On-Call option in a Black-Scholes model with optimal quantization.]{\textit{ option in a Black-Scholes model.}}
	\label{EB:fig:putoncall}
\end{figure}
We notice that both methods yield a weak-error of order $2$ as desired, however it is not clear that one should use the log-normal representation of \eqref{EB:priceputoncall} in place of the Gaussian representation. Indeed, both constants in the rate of convergence are of the desired order and getting Gaussian optimal quantizers is much cheaper than building optimal quantizers of log-normal random variables. Hence, one should choose the Gausian representation as it is as precise as the log-normal one and is much cheaper.

\subsubsection{Exchange spread Option}

In this part, we consider a higher dimensional problem. Let two Black-Scholes assets $(S_T^i)_{i = 1,2}$ at time $T$ related to two Brownian motions $(W_T^i)_{i = 1, 2}$, with correlation $\rho \in [-1, 1]$. We are interested by an exchange spread option with strike $K$ with payoff
\begin{equation*}
	( S_T^1 - S_T^2 - K )_+
\end{equation*}
whose price is
\begin{equation}\label{EB:price_exchange_spread}
	I_0 := \E \big[ \e^{-rT} ( S_T^1 - S_T^2 - K )_+ \big].
\end{equation}
Decomposing the two Brownian motions into two independent parts, we have $(W_T^1, W_T^2) = \sqrt{T} ( \sqrt{1 - \rho^2} Z_1 + \rho Z_2, Z_2 )$, where $Z_1$ and $Z_2$ are two independent $\N (0,1)$-distributed Gaussian random variables. Now, pre-conditioning on $Z_2$ in \eqref{EB:price_exchange_spread} and using \eqref{EB:price_call}, we have
\begin{equation*}
	I_0 = \E \big[ \varphi (Z_2) \big]
\end{equation*}
where
\begin{equation*}
	\varphi(z) = Call_{_{BS}} ( s_0^1 \e^{- \rho^2 \sigma_1^2 T/2 + \sigma_1 \rho \sqrt{T} z}, s_0^2 \e^{ (r -\sigma^2_2/2) T + \sigma_2 \sqrt{T} z} + K, r, \sigma_1 \sqrt{1 - \rho^2}, T ).
\end{equation*}
The numerical specifications of the function $\varphi$ are as follows:
\begin{equation*}
	s_0^{i} = 100, \quad r = 0.02, \quad \sigma_{i} = 0.5, \quad \rho = 0.5, \quad T = 10, \quad K = 10.
\end{equation*}
In that case, the reference value is $53.552678$.

First, we look at the weak error induced by the quantization-based cubature formula when approximating \eqref{EB:price_exchange_spread}. We use optimal quantizers of the normal random variable $Z_2$. The quantization-based approximation is denoted $\widehat I_{N}$,
\begin{equation*}
	\widehat I_{N} := \E \big[ \varphi ( \widehat Z^N ) \big].
\end{equation*}
The first graphic in Figure \ref{EB:fig:exchange} represents the weak error between the benchmark and the quantization-based approximation in function of the size of the grid: $N \longmapsto \big\vert I_0 - \E \big[ \varphi ( \widehat Z^N ) \big] \big\vert$, the second plots $N \longmapsto N^2 \times \big\vert I_0 - \E \big[ \varphi ( \widehat Z^N ) \big] \big\vert$ and allows us to observe that the rate of convergence is indeed of order $2$.
\begin{figure}[H]
	\centering
	\begin{subfigure}[t]{0.49\textwidth}
		\centering
		\includegraphics[width=1.\textwidth]{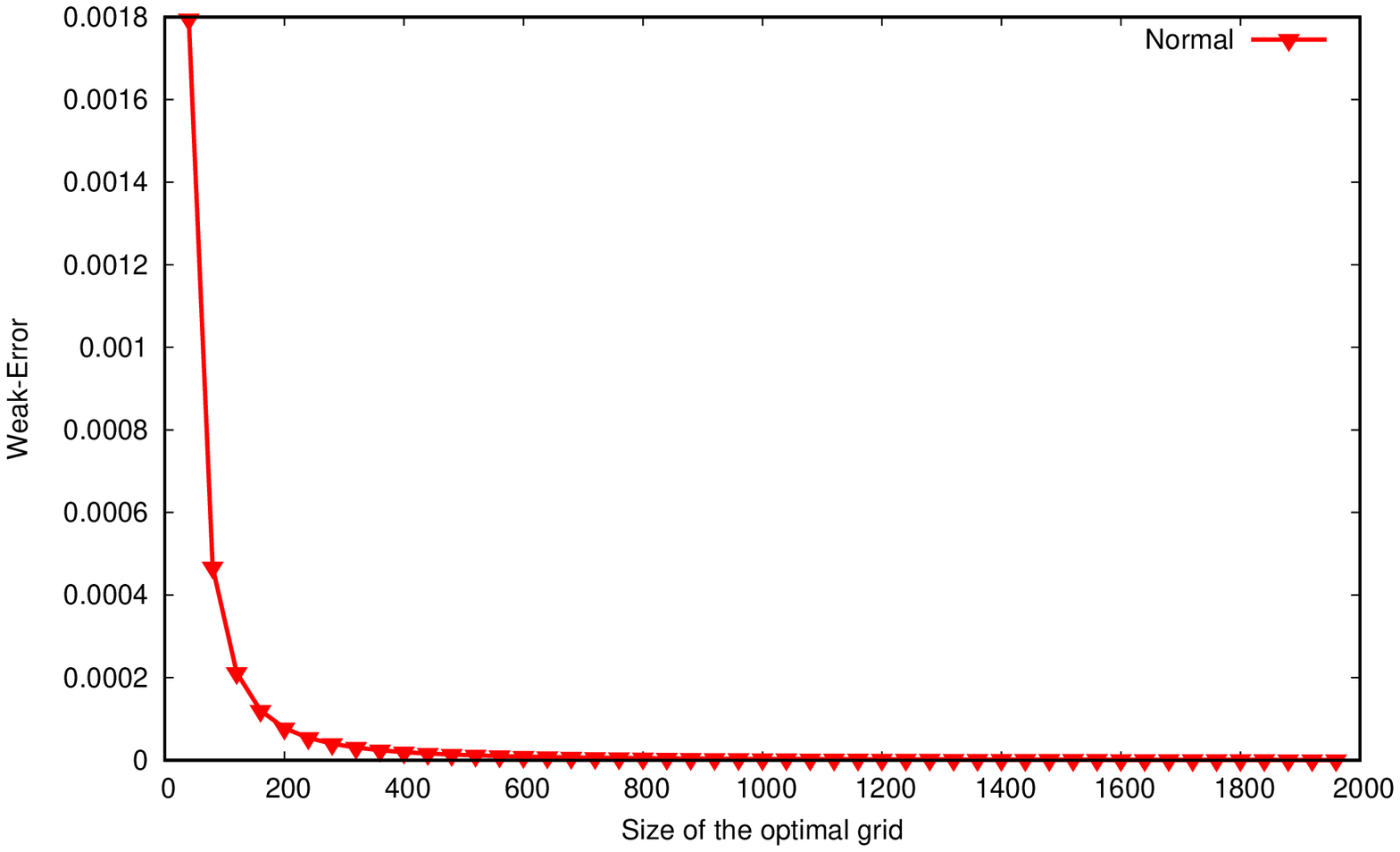}
		\caption{\textit{$N \longmapsto \big\vert I_0 - \E \big[ \varphi ( \widehat Z^N ) \big] \big\vert$ (\textcolor{red}{\ding{116}})}}
	\end{subfigure}%
	~
	\begin{subfigure}[t]{0.49\textwidth}
		\centering
		\includegraphics[width=1.\textwidth]{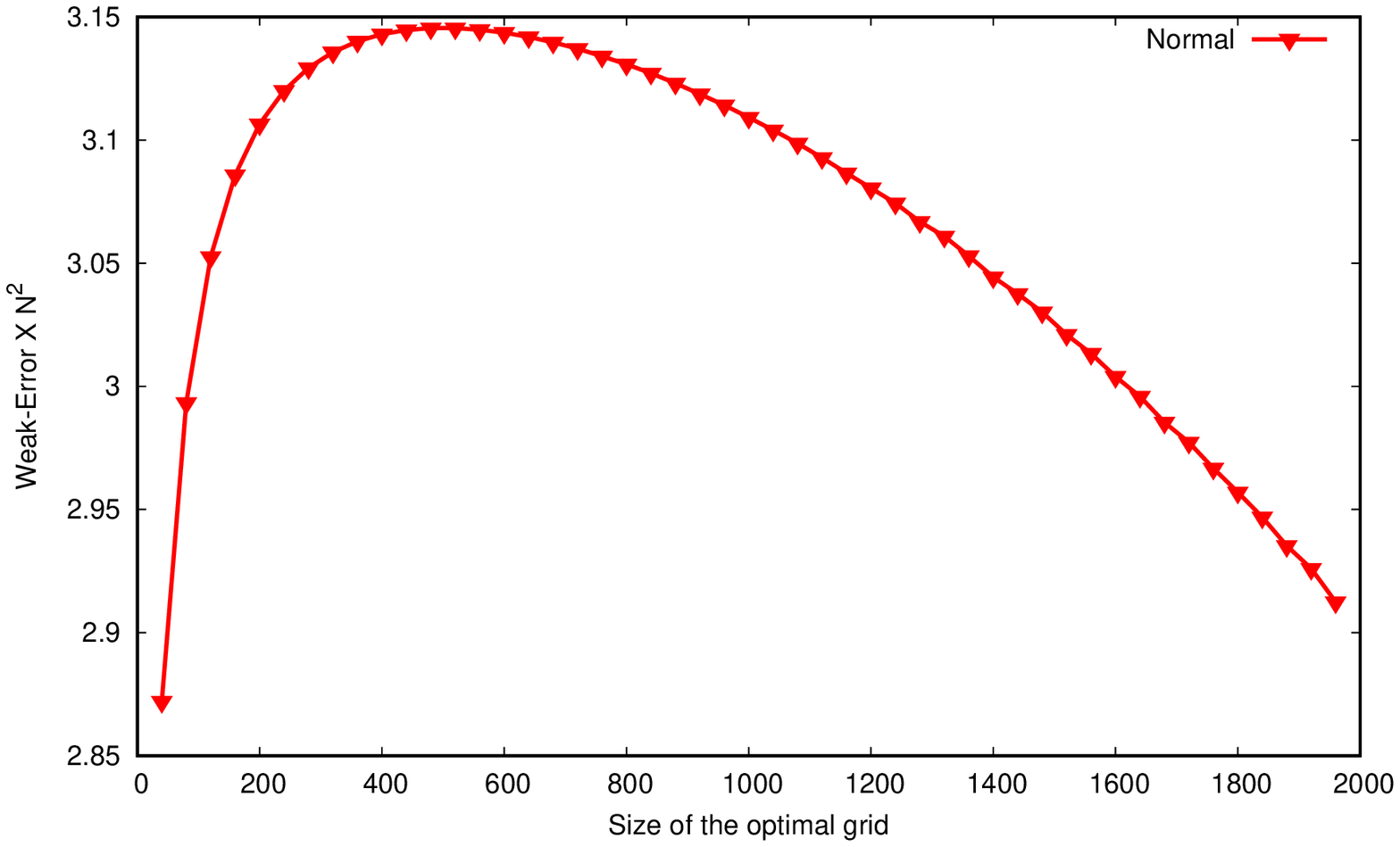}
		\caption{\textit{$N \longmapsto N^2 \times \big\vert I_0 - \E \big[ \varphi ( \widehat Z^N ) \big] \big\vert$ (\textcolor{red}{\ding{116}})}}
	\end{subfigure}
	\caption[Pricing of an Exchange spread option in a Black-Scholes model with optimal quantization.]{\textit{Exchange spread option pricing in a Black-Scholes model.}}
	\label{EB:fig:exchange}
\end{figure}

Now, noticing that $\varphi$ is a twice differentiable function with a bounded second derivative, we show that we can attain a weak error of order $3$ when using a Richardson-Romberg extrapolation denoted $\widehat I_{\widetilde N, N}^{RR}$ and defined in \eqref{EB:RR_quantif}.
\begin{figure}[H]
	\centering
	\begin{subfigure}[t]{0.49\textwidth}
		\centering
		\includegraphics[width=1.\textwidth]{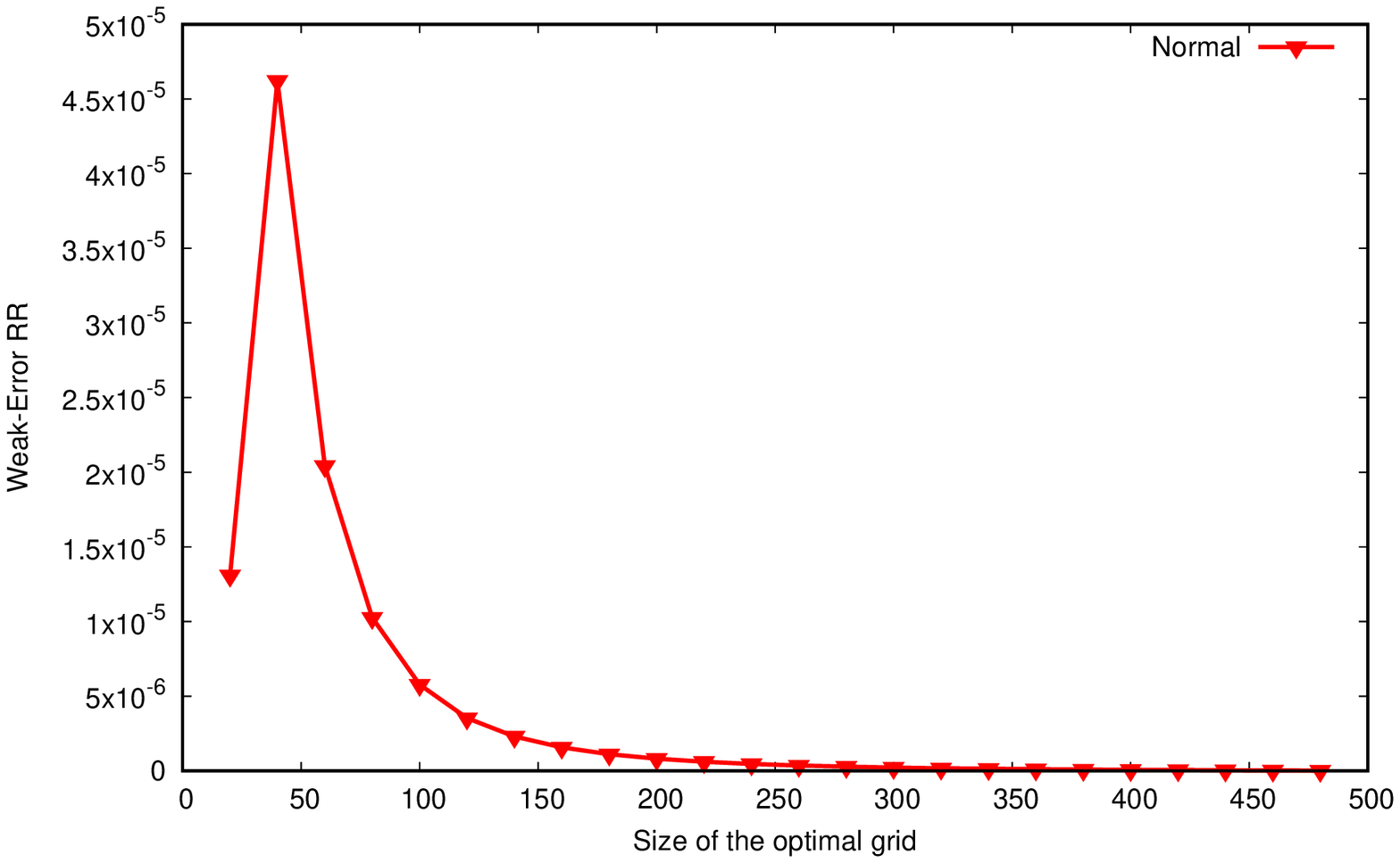}
		\caption{\textit{$N \longmapsto\vert I_0 - \widehat I_{\widetilde N, N}^{RR} \vert$ (\textcolor{red}{\ding{116}})}}
	\end{subfigure}%
	~
	\begin{subfigure}[t]{0.49\textwidth}
		\centering
		\includegraphics[width=1.\textwidth]{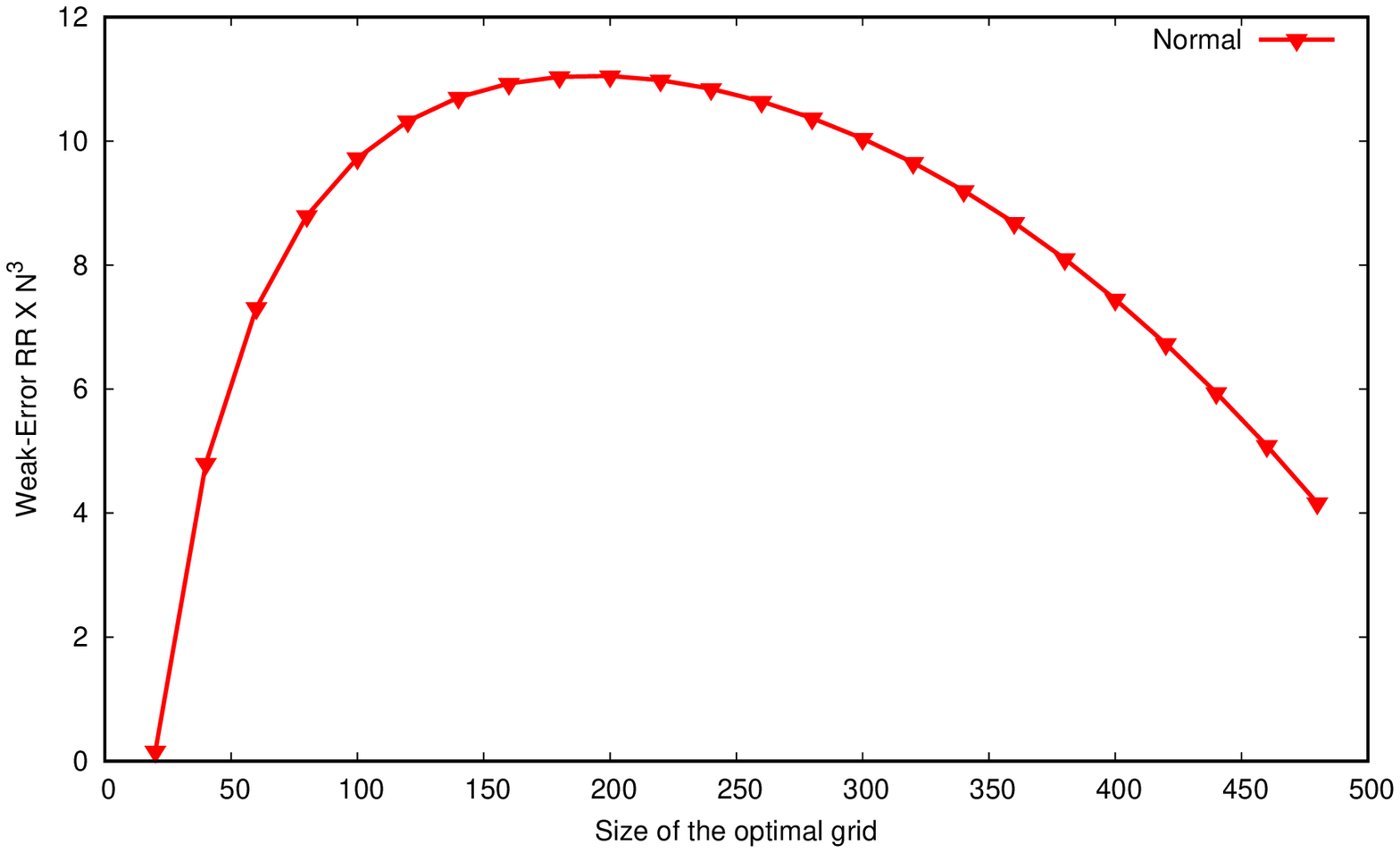}
		\caption{\textit{$N \longmapsto N^3 \times \vert I_0 - \widehat I_{\widetilde N, N}^{RR} \vert$ (\textcolor{red}{\ding{116}})}}
	\end{subfigure}
	\caption[Pricing of an Exchange spread option in a Black-Scholes model with optimal quantization (with Richardson-Romberg extrapolation).]{\textit{Richardson-Romberg extrapolation, with $\widetilde N = 1.2 \times N$, for Exchange spread option pricing in a Black-Scholes model.}}
	\label{EB:fig:exchangeRR}
\end{figure}

\subsubsection{Basket Option}

A typical financial product that allows to diversify the market risk and to invest in options is a basket option. The simplest one is an option on a weighted average of stocks. For example, if we consider an option on the FTSE index, this is a basket option where the assets are the companies defined in the description of the index and the weights are the market capitalization of each company at the time we built the index normalized by the sum on all market capitalizations.

In this part, we consider $d$ correlated assets $(S_T^k)_{k = 1, \dots, d}$ following a Black-Scholes model and the payoff we consider is
\begin{equation}\label{EB:payoff_basket}
	f(S_t^1, \dots, S_T^d) := \bigg( \sum_{k=1}^{d} \alpha_k S_T^k - K \bigg)_+
\end{equation}
whose price is
\begin{equation*}
	I_0 := \e^{-rT} \E \Bigg[ \bigg( \sum_{k=1}^{d} \alpha_k S_T^k - K \bigg)_+ \Bigg].
\end{equation*}
$I_0$ cannot be computed directly, hence we use a Monte Carlo estimator in order to approximate the expectation.
The standard estimator, denoted $\widehat{I}_M$, is the crude Monte Carlo estimator and is given by
\begin{equation*}
	\widehat{I}_M := \e^{-rT} \frac{1}{M} \sum_{m=1}^{M} \bigg( \sum_{k=1}^{d} \alpha_k S_T^{k,(m)} - K \bigg)_+
\end{equation*}
where $(S_T^{k,(m)})_{m = 1, \dots, M}$ are i.i.d. copies of $S_T^k$.
We compare the crude estimator to our novel approach based on a $d$-dimensional quantized control variates $\Xi^N$. In that case, $I_0$ is approximated by $I^N$ defined by
\begin{equation*}
	I^N := \e^{-rT} \E \Bigg[ \bigg( \sum_{k=1}^{d} \alpha_k S_T^k - K \bigg)_+ - \langle \lambda, \Xi^N \rangle \Bigg]
\end{equation*}
where $\Xi^N$ is defined later, yielding the following Monte Carlo estimator
\begin{equation*}
	\widehat I^{\lambda, N}_M := \e^{-rT} \frac{1}{M} \sum_{m=1}^{M} \bigg( \sum_{k=1}^{d} \alpha_k S_T^{k,(m)} - K \bigg)_+ - \langle \lambda, \Xi^{N,(m)} \rangle .
\end{equation*}
We propose two different control variates $\Xi^N$ based on optimal quantizers either of log-normal random variables or of Gaussian random variables.
\begin{enumerate}
	\item The control variate, denoted $\overline \Xi^{N}$, is defined by, $\forall k= 1, \dots, d$
	      \begin{equation*}
		      \overline \Xi^{N}_k := f(\E [ S_T^1 ], \dots, S_T^k, \dots, \E [ S_T^d ] ) - \E \big[ f(\E [ S_T^1 ], \dots, \widehat S_T^{k, N}, \dots, \E [ S_T^d ] ) \big]
	      \end{equation*}
	      where $(\widehat S_T^{k, N})_{k=1, \dots, d}$ are optimal quantizers of cardinality $N$ of $S_T^k$. In that case, the Monte Carlo estimator is denoted $\widehat{\overline{I}}^{\lambda, N}_M$.

	\item The control variate, denoted $\widetilde \Xi^{N}$, is using another representation of the payoff \eqref{EB:payoff_basket}, using $d$ Gaussian random variables i.i.d in place of the assets $S_T^k$ because the $d$ underlying correlated Brownian Motions can be expressed from $d$ rescaled independent Gaussian random variables, thus we define $\varphi$ our new representation for the payoff as
	      \begin{equation*}
		      \varphi (Z^1, \dots, Z^d) := f(S_T^1, \dots, S_T^d)
	      \end{equation*}
	      where $(Z^k)_{k=1, \dots, d}$ are i.i.d Gaussian random variables. Now, defining our control variates with the function $\varphi$, $\forall k= 1, \dots, d$
	      \begin{equation*}
		      \widetilde \Xi^{N}_k := \varphi(0, \dots, Z^k, \dots, 0) - \E \big[ \varphi(0, \dots, \widehat Z^{N}, \dots, 0) \big]
	      \end{equation*}
	      where $( \widehat Z^N )_{k=1, \dots, d}$ is an optimal quantizer of $Z \sim \N (0,1)$. In that case, the Monte Carlo estimator is denoted $\widehat{\widetilde{I}}^{\lambda, N}_M$.
\end{enumerate}

The Black-Scholes parameters considered are
\begin{equation*}
	s_0^{i} = 100, \qquad r = 2\%, \qquad \sigma_i = \frac{i}{d+1}, \quad \rho = 0.5,
\end{equation*}
and the specifications of the product are
\begin{equation*}
	K=100, \quad \alpha_i = \frac{2i}{d(d+1)}, \quad T=1
\end{equation*}
such that $\sum \alpha_i = 1$. The benchmarks used for the computation of the \textit{MSE} has been computed using a Monte Carlo estimator with control variate without quantization where the term $ \sum_{k=1}^{d} \E [ X_k ] $ is computed using Black-Scholes Call pricing closed formulas. The \textit{Mean Squared Error} of an estimator $I$ is computed using the formula
\begin{equation*}
	MSE(I) = \frac{1}{n} \sum_{i=1}^{n} ( I^{(i)} - I_0 )^{2}
\end{equation*}
where $(I^{(i)})_{i= 1, \dots, n}$ are $n$ independent copies of $I$.

Table \ref{EB:table:differents} compares three different types of Monte Carlo estimators: the standard (Crude) Monte Carlo estimator $\widehat{I}_M$, our novel Monte Carlo estimator with control variate based on optimal quantizers of Gaussian random variables $\widehat{\widetilde{I}}^{\lambda, N}_M$ and another one with optimal quantizers of log-normal random variables  $\widehat{\overline{I}}^{\lambda, N}_M$. The notation $n$ corresponds to the number of Monte Carlo used for computing the \textit{MSE}, $M$ is the size of each Monte Carlo and $N$ is the size of the optimal quantizers. The prices of reference for each $d$ are
\begin{itemize}
	\item for $d=2$: $14.2589$ $(\pm 0.0010)$,
	\item for $d=3$: $14.1618$ $(\pm 0.0015)$,
	\item for $d=5$: $13.9005$ $(\pm 0.0022)$,
	\item for $d=10$: $13.4979$ $(\pm 0.0034)$.
\end{itemize}

\begin{table}[!ht]
	\centering
	\begin{tabular}{ |c|l|c|c|c|c| }
		\hline
		\multicolumn{2}{|c|}{} & \multicolumn{2}{|c|}{$N=20$} & \multicolumn{2}{|c|}{$N=200$}                                                 \\
		\hline
		d                      & MC Estimator                 & Mean ($\pm 1.96 \times$std)   & MSE    & Mean ($\pm 1.96 \times$std) & MSE    \\
		\hline
		\multirow{3}{4em}{$d=2$}
		                       & Crude                        & $14.2695$ $(\pm 0.0662)$        & $0.1450$ & $14.2695$ $(\pm 0.0662)$      & $0.1450$ \\
		                       & CV Gaussian                  & $14.1017$ $(\pm 0.0399)$        & $0.0774$ & $14.2773$ $(\pm 0.0399)$      & $0.0530$ \\
		                       & CV Log-Normal                & $14.2351$ $(\pm 0.0078)$        & $0.0026$ & $14.2614$ $(\pm 0.0078)$      & $0.0020$ \\
		\hline
		\multirow{3}{4em}{$d=3$}
		                       & Crude MC                     & $14.1770$ $(\pm 0.0671)$        & $0.1492$ & $14.1770$ $(\pm 0.0671)$      & $0.1492$ \\
		                       & CV Gaussian                  & $14.0336$ $(\pm 0.0451)$        & $0.0837$ & $14.1685$ $(\pm 0.0451)$      & $0.0673$ \\
		                       & CV Log-Normal                & $14.1479$ $(\pm 0.0104)$        & $0.0038$ & $14.1674$ $(\pm 0.0104)$      & $0.0036$ \\
		\hline
		\multirow{3}{4em}{$d=5$}
		                       & Crude MC                     & $13.8803$ $(\pm 0.0720)$        & $0.1717$ & $13.8803$ $(\pm 0.0720)$      & $0.1717$ \\
		                       & CV Gaussian                  & $13.6686$ $(\pm 0.0562)$        & $0.1580$ & $13.8883$ $(\pm 0.0562)$      & $0.1044$ \\
		                       & CV Log-Normal                & $13.8797$ $(\pm 0.0151)$        & $0.0080$ & $13.9008$ $(\pm 0.0151)$      & $0.0076$ \\
		\hline
		\multirow{3}{4em}{$d=10$}
		                       & Crude MC                     & $13.5046$ $(\pm 0.0599)$        & $0.1186$ & $13.5046$ $(\pm 0.0599)$      & $0.1186$ \\
		                       & CV Gaussian                  & $13.2429$ $(\pm 0.0515)$        & $0.1527$ & $13.5113$ $(\pm 0.0515)$      & $0.0878$ \\
		                       & CV Log-Normal                & $13.4221$ $(\pm 0.0194)$        & $0.0181$ & $13.4983$ $(\pm 0.0194)$      & $0.0124$ \\
		\hline
	\end{tabular}
	\caption[Pricing of a Basket option in a Black-Scholes model with Monte Carlo using quantization-based control variates.]{\textit{$n=128$, $M=1e4$}}
	\label{EB:table:differents}
\end{table}

One remarks in Table \ref{EB:table:differents} the efficiency of the optimal quantization-based variance reduction method. The variance, in the best cases, can be divided by almost $100$ when using the optimal quantizers of Log-Normal random variables. Figure \ref{EB:fig:bias} shows the effect of $N$ (for $d=3$), the size the optimal quantizers, on the bias. The same seeds are used for all the Monte Carlo estimator, the only thing varying is $N$.

\begin{figure}[!ht]
	\centering
	\begin{subfigure}[t]{0.49\textwidth}
		\centering
		\includegraphics[width=1.\textwidth]{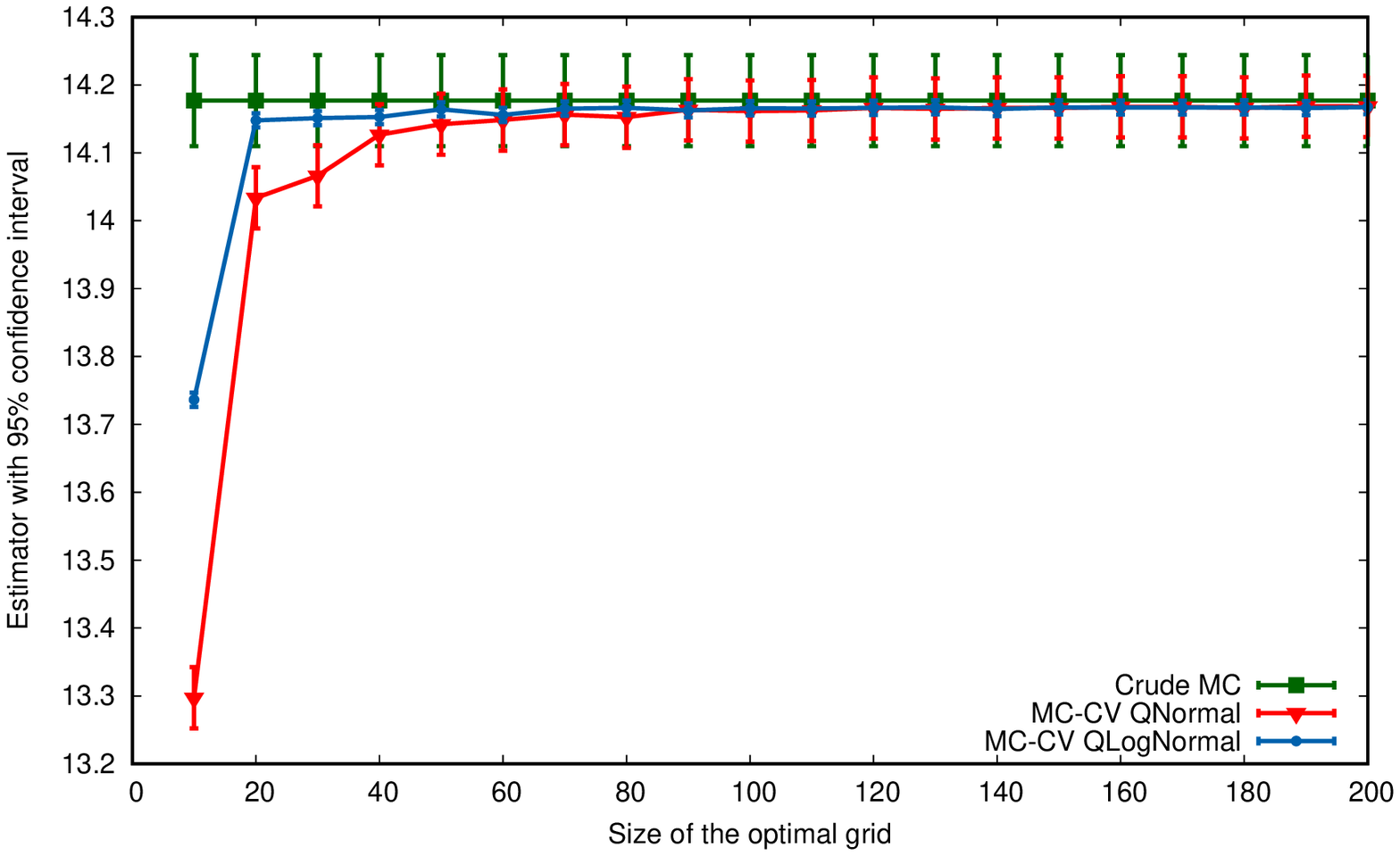}
		\caption{\textit{$N \longmapsto \vert I_0 - \widehat{\overline{I}}^{\lambda, N}_M \vert$ (\textcolor{red}{\ding{116}}), $N \longmapsto \vert I_0 - \widehat{\widetilde{I}}^{\lambda, N}_M \vert$ (\textcolor{NavyBlue}{\ding{108}}) and the Crude Monte Carlo estimator (\textcolor{ForestGreen}{\ding{110}}) with their associated confidence interval at $95\%$. } }
	\end{subfigure}%
	~
	\begin{subfigure}[t]{0.49\textwidth}
		\centering
		\includegraphics[width=1.\textwidth]{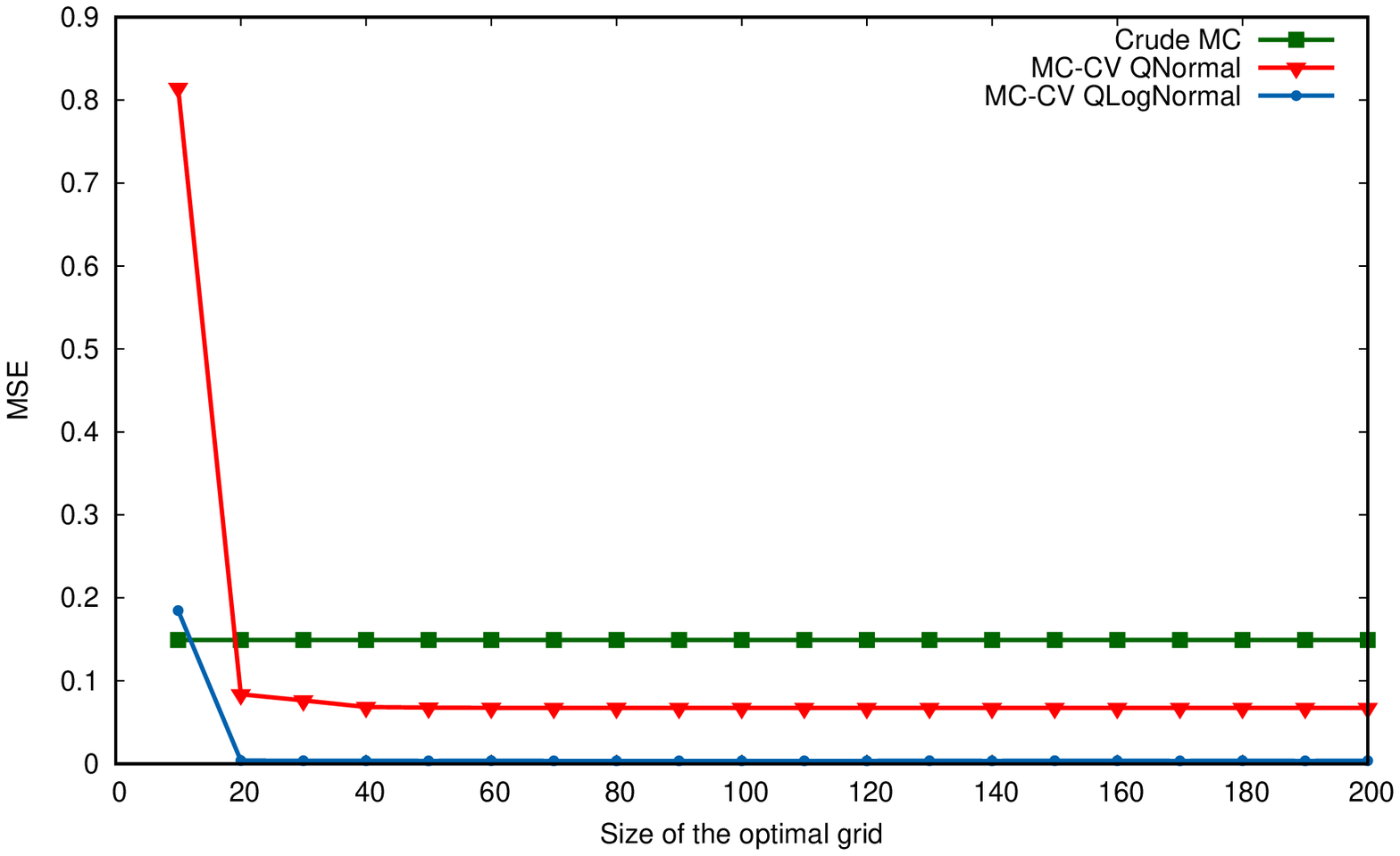}
		\caption{\textit{$N \longmapsto MSE (\widehat{I}_M)$ (\textcolor{ForestGreen}{\ding{110}}), $N \longmapsto MSE (\widehat{\overline{I}}^{\lambda, N}_M)$ (\textcolor{red}{\ding{116}}) and $N \longmapsto MSE (\widehat{\widetilde{I}}^{\lambda, N}_M)$ (\textcolor{NavyBlue}{\ding{108}}). } }
	\end{subfigure}
	\caption[Behavior in function of $N$ of the pricing of a Basket option in a Black-Scholes model with Monte Carlo using quantization-based control variates.]{\textit{$n=128$, $M=1e4$, $d=3$.}}
	\label{EB:fig:bias}
\end{figure}

\section*{Acknowledgment}
The authors wish to thank Pauline Corblet and Eric Tea for their useful feedback. The PhD thesis of Thibaut Montes is funded by a CIFRE grand from The Independent Calculation Agent (The ICA) and French ANRT.

\nocite{*}
\bibliography{bibli}
\bibliographystyle{alpha}

\end{document}